\documentclass[12pt,oneside]{amsart}
\usepackage[margin=1in]{geometry}

\usepackage{graphicx}
\usepackage{color}
\usepackage{amssymb}
\usepackage{tikz}
\usepackage[normalem]{ulem}
\usepackage{hyperref} 

\DeclareGraphicsRule{.tif}{png}{.png}{`convert #1 `dirname #1`/`basename #1 .tif`.png}
\usepackage{amsmath,amsthm,amscd,amssymb}
\usepackage{latexsym}

\newcommand{\bF}{\mathbb{F}}

\newcommand{\cE}{\mathcal{E}}

\newcommand{\cH}{\mathcal{H}}

\newcommand{\e}{\varepsilon}
\newcommand{\es}{\varnothing}
\newcommand{\sm}{\setminus}
\newcommand{\n}{\noindent}
\renewcommand{\v}{\vspace{.2cm}}

\renewcommand{\hat}{\widehat}

\usepackage{ulem}
\usepackage{soul}
\numberwithin{equation}{section}
\theoremstyle{plain}
\newtheorem{theorem}{Theorem}[section]
\newtheorem{lemma}[theorem]{Lemma}
\newtheorem{corollary}[theorem]{Corollary}
\newtheorem{proposition}[theorem]{Proposition}

\theoremstyle{definition}
\newtheorem{definition}[theorem]{Definition}

\theoremstyle{remark}
\newtheorem{remark}[theorem]{Remark}

\newtheorem{case[theorem]}{Case}

\def\R{\mathbb R}

\date{November 11, 2025}      

\begin{document} 
\title{VC-Dimension of Salem sets over finite fields} 

\author{Moustapha Diallo} 
\address{Department of Mathematics \\ University of Georgia}
\email{moustapha.diallo@uga.edu} 

\author{Brian McDonald} 
\address{Department of Mathematics \\ University of Georgia} 
\email{brian.mcdonald@uga.edu}

\thanks{}


\begin{abstract}
\n The VC-dimension, introduced by Vapnik and Chervonenkis in 1968 in the context of learning theory, has in recent years provided a rich source of problems in combinatorial geometry.  Given $E\subseteq \mathbb{F}_q^d$ or $E\subseteq \mathbb{R}^d$, finding lower bounds on the VC-dimension of hypothesis classes defined by geometric objects such as spheres and hyperplanes is equivalent to constructing appropriate geometric configurations in $E$.  The complexity of these configurations increases exponentially with the VC-dimension.
\v

\n These questions are related to the Erd\H{o}s distance problem and the Falconer problem when considering a hypothesis class defined by spheres.  In particular, the Erd\H{o}s distance problem over finite fields is equivalent to showing that the VC-dimension of translates of a sphere of radius $t$ is at least one for all nonzero $t\in \mathbb{F}_q$.  In this paper, we show that many of the existing techniques for distance problems over finite fields can be extended to a much broader context, not relying on the specific geometry of circles and spheres.  We provide a unified framework which allows us to simultaneously study highly structured sets such as algebraic curves, as well as random sets.

\end{abstract}

\maketitle

\tableofcontents

\section{Introduction}

\subsection{Distance problems}

\n In this paper, we study various generalizations of the Erd\H{o}s distance problem over finite fields.  For a set of $n$ points $E\subseteq \mathbb{R}^2$, Erd\H{o}s investigated the size of the distance set
$$
\Delta(E):=\{|x-y|: x,y\in E\}.
$$
The size of the distance set is trivially bounded above,
$$
|\Delta(E)|\leq \binom{n}{2}.
$$
Here and throughout, we denote the cardinality of a finite set $S$ by $|S|$.  This trivial upper bound is sharp, since it is not hard to find examples of sets $E\subseteq \mathbb{R}^2$ of arbitrary size such that all distances are distinct. Lower bounds are much more interesting.  Erd\H{o}s \cite{E46} showed in 1946 that $|\Delta(E)|\gtrsim \sqrt{n}$, and conjectured that a square lattice is the optimal configuration to minimize the size of the distance set, i.e.
$$
|\Delta(E)|\gtrsim \frac{n}{\sqrt{\log{n}}}.
$$
Here, and throughout, we use asymptotic notation $f(n)\lesssim g(n)$ interchangeably with $f(n)=O(g(n))$.  
\\

\n After almost 70 years and several iterations of improvement on the optimal bound, Guth and Katz \cite{GK15} essentially resolved the conjecture, up to a factor of $\sqrt{\log{n}}$.

\begin{theorem}[Guth-Katz]
For $E\subseteq \mathbb{R}^2$, $|E|=n$,
$$
|\Delta(E)|\gtrsim \frac{n}{\log{n}}.
$$
\end{theorem}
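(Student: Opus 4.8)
\emph{Proof proposal.} I would follow the Elekes--Sharir reduction to an incidence problem for lines in $\mathbb{R}^3$, and then invoke the polynomial-method incidence bound. Set $d := |\Delta(E)|$ and let $Q$ be the number of ordered quadruples $(a,b,c,d)\in E^4$ with $0\neq |a-b| = |c-d|$. Grouping the $n(n-1)$ ordered pairs of distinct points by their common distance and applying Cauchy--Schwarz, one gets $Q \geq (n(n-1))^2/d \gtrsim n^4/d$, so the theorem is equivalent to the upper bound $Q \lesssim n^3\log n$.

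The first step is to rewrite $Q$ using rigid motions. For a quadruple with $a\neq b$ there is a unique orientation-preserving isometry $g$ of the plane with $g(a)=c$ and $g(b)=d$, since such an isometry is determined by the images of two distinct points; conversely, writing $N(g):=|E\cap g^{-1}(E)|$, each motion $g$ with $N(g)\geq 2$ yields exactly $N(g)(N(g)-1)$ quadruples. Hence
\[
Q \;=\; \sum_{g\,:\,N(g)\geq 2} N(g)\,(N(g)-1)\;\le\;\sum_{g\,:\,N(g)\geq 2} N(g)^2,
\]
a finite sum. Following Guth--Katz, one next chooses coordinates on the $3$-dimensional group of orientation-preserving isometries so that, for any two planar points $P,Q$, the locus $\{g: g(P)=Q\}$ is a line $\ell_{P,Q}\subset\mathbb{R}^3$. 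This produces a family $\mathcal L=\{\ell_{P,Q}: P,Q\in E\}$ of $L:=n^2$ lines, with $N(g)$ equal to the number of lines of $\mathcal L$ through the point representing $g$. One checks that two distinct members of $\mathcal L$ meet in at most one point, and --- the key structural input --- that at most $O(\sqrt L)$ lines of $\mathcal L$ lie in any single plane and at most $O(\sqrt L)$ lie in any regulus (doubly ruled surface); this last point requires a genuine argument, ruling out that a plane or regulus absorbs too many of the motion-lines attached to the fixed set $E$.

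The core is then the Guth--Katz incidence theorem: for a family of $L$ lines in $\mathbb{R}^3$ with at most $O(\sqrt L)$ lines in any plane and at most $O(\sqrt L)$ in any regulus, the number of points incident to at least $r$ of the lines is $O(L^{3/2}/r^2)$ for every $r\ge 2$. Granting it, I split $\{g: N(g)\ge 2\}$ into the $O(\log n)$ dyadic ranges $r\le N(g)<2r$ with $r$ up to $O(n)$: each range contains $O(n^3/r^2)$ motions and so contributes $O(n^3)$ to $\sum_g N(g)^2$, and summing the scales gives $Q\lesssim n^3\log n$, hence $d\gtrsim n/\log n$. The principal obstacle is the incidence theorem itself: this is where the polynomial method is essential --- polynomial partitioning together with a degree-reduction argument reduce matters to lines lying on the zero set of a single low-degree polynomial, after which classical ruled-surface theory (the Cayley--Salmon analysis of the flecnode polynomial) bounds how many lines, and how many rich points, such a surface can support. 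The secondary difficulty, flagged above, is verifying the no-many-lines-in-a-plane-or-regulus property for the particular family $\mathcal L$ coming from $E$.
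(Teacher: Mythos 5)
This theorem is not proved in the paper at all: it is quoted as background and attributed to Guth--Katz \cite{GK15}, so there is no in-paper argument to compare against. Your sketch is a faithful outline of the actual published proof: the Cauchy--Schwarz reduction to bounding the quadruple count $Q$, the Elekes--Sharir correspondence between equal-distance quadruples and orientation-preserving rigid motions, the reformulation as a point-line incidence problem for $n^2$ lines in $\mathbb{R}^3$ with the no-too-many-lines-in-a-plane-or-regulus condition, the $O(L^{3/2}/r^2)$ incidence bound proved by polynomial partitioning plus ruled-surface (flecnode/Cayley--Salmon) theory, and the dyadic summation losing the factor $\log n$. The one step you elide is that the $\mathbb{R}^3$ parametrization covers only the non-translational motions; translations must be split off separately, but they contribute at most $O(n^3)$ quadruples (fixing $a,b,c$ determines $d$), so this does not affect the final bound. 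With that caveat noted, your proposal is essentially the Guth--Katz argument and is correct in outline, though of course the incidence theorem itself is the deep content and is only cited, not proved, in your sketch.
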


\n Iosevich and Rudnev \cite{IR07} studied a related problem over finite fields in 2007.  In the finite setting, in order for these asymptotic questions to have meaning, we generally consider $\mathbb{F}_q$, a finite field with $q=p^r$ elements, as $q\to \infty$.  Of course, it is not immediately clear what is meant by distances over finite fields in the first place.  
\v

\begin{definition}
We take the distance between $x,y\in \mathbb{F}_q^d$ to be
$$
||x-y||:=x_1^2+\cdots +x_d^2,
$$
\end{definition}
\v

\begin{remark}
This distance is an element of $\mathbb{F}_q$ and not a real number.  This does not yield a norm or a metric space, yet it shares many properties with the usual Euclidean sphere algebraically and geometrically.
\end{remark}
\v

\n We notice that the distance set can never be larger than the field itself, so a reasonable formulation of the Erd\H{o}s distance problem over finite fields is to ask how large $E\subseteq \mathbb{F}_q^2$ must be (or $E\subseteq \mathbb{F}_q^d$) to ensure that $\Delta(E)=\mathbb{F}_q$ recovers the entire field?
\\

\begin{theorem}[Iosevich-Rudnev]
If $E\subseteq \mathbb{F}_q^d$, $|E|\geq cq^{\frac{d+1}{2}}$, $c$ sufficiently large, it follows that
$$
\Delta(E)=\mathbb{F}_q.
$$ 
\end{theorem}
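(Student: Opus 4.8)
The plan is to count, for each $t\in\mathbb{F}_q$, the number of ordered pairs $\nu(t):=|\{(x,y)\in E\times E:\|x-y\|=t\}|$ and to show that $\nu(t)>0$ for every $t$. The value $t=0$ is immediate since $\|x-x\|=0$ for any $x\in E$, so we may assume $t\neq 0$ (and $d\geq 2$, the statement being false for $d=1$). Writing $S_t:=\{z\in\mathbb{F}_q^d:\|z\|=t\}$ for the sphere of radius $t$ and $E(\cdot)$ for the indicator of $E$, the symmetry $z\mapsto -z$ of $S_t$ together with Plancherel on $\mathbb{F}_q^d$ gives
$$
\nu(t)=\sum_{x,y\in E}S_t(x-y)=q^{-d}\sum_{m\in\mathbb{F}_q^d}|\hat E(m)|^2\,\hat S_t(m),
$$
where $\hat f(m)=\sum_z f(z)\chi(-z\cdot m)$ for a fixed nontrivial additive character $\chi$ of $\mathbb{F}_q$ (and $q$ odd). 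The term $m=0$ is the main term: $\hat E(0)=|E|$ and $\hat S_t(0)=|S_t|=q^{d-1}+O(q^{(d-1)/2})$, so it contributes $(1+o(1))|E|^2/q$.

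The next step is to estimate $\hat S_t(m)$ for $m\neq 0$. Expanding $S_t(z)=q^{-1}\sum_s\chi(s(\|z\|-t))$, the $s=0$ term vanishes (as $m\neq 0$), and for $s\neq 0$ one completes the square in each coordinate and applies the quadratic Gauss sum identity $\sum_u\chi(su^2)=\eta(s)\mathfrak g$ (with $\eta$ the quadratic character, $|\mathfrak g|=\sqrt q$) to obtain
$$
\hat S_t(m)=\frac{\mathfrak g^d}{q}\sum_{s\neq 0}\eta(s)^d\,\chi\!\left(-st-\frac{\|m\|}{4s}\right).
$$
When $\|m\|\neq 0$ the inner sum is a Kloosterman sum ($d$ even) or a Salié sum ($d$ odd), so the Weil bound yields $|\hat S_t(m)|\leq 2q^{(d-1)/2}$; when $\|m\|=0$ with $m\neq 0$ (possible over $\mathbb{F}_q$) the inner sum is an ordinary Gauss sum, resp. geometric sum, of modulus at most $\sqrt q$, giving the same bound. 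Hence $\max_{m\neq 0}|\hat S_t(m)|\leq 2q^{(d-1)/2}$ for all $t\neq 0$.

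Finally I would split off $m=0$ and bound the remainder, using Plancherel once more in the form $\sum_m|\hat E(m)|^2=q^d|E|$:
$$
\left|\nu(t)-\frac{|E|^2|S_t|}{q^d}\right|\leq q^{-d}\Big(\max_{m\neq 0}|\hat S_t(m)|\Big)\sum_{m\neq 0}|\hat E(m)|^2\leq 2q^{(d-1)/2}|E|.
$$
Thus $\nu(t)\geq(1+o(1))\,|E|^2/q-2q^{(d-1)/2}|E|$, which is strictly positive once $|E|\geq cq^{(d+1)/2}$ with $c$ a sufficiently large absolute constant; since this holds for every $t\neq 0$ and $0\in\Delta(E)$ automatically, $\Delta(E)=\mathbb{F}_q$. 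The main technical obstacle is the analysis of $\hat S_t(m)$: carrying out the Gauss-sum reduction cleanly and invoking the Weil bound for the resulting Kloosterman/Salié sums, while separately disposing of the isotropic case $\|m\|=0$, $m\neq 0$, which has no Euclidean analogue. (One can alternatively derive $|\hat S_t(m)|\lesssim q^{(d-1)/2}$ from a direct count of $|S_t\cap(S_t+z)|$, bypassing explicit exponential-sum estimates, but the Fourier/Gauss-sum route is the most transparent.)
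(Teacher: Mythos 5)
Your proof is correct and takes essentially the same route as the paper's framework: it is the Iosevich--Rudnev Fourier-analytic count that the paper quotes and then generalizes in Lemma \ref{salem_edge} (main term from $m=0$, error controlled by the pointwise Fourier bound on the sphere together with Plancherel), with the sphere's Salem property, which the paper only cites as Theorem \ref{IRthm}, verified here directly via Gauss sums and the Weil bound for Kloosterman/Sali\'e sums. I see no gaps; the separate treatment of the isotropic case $\|m\|=0$, $m\neq 0$, and the caveats $t\neq 0$, $d\geq 2$, $q$ odd are handled appropriately.
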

\n Indeed, they also demonstrated a more precise quantitative formulation of this result, counting the number of pairs $(x,y)\in E$ with $||x-y||=t$ for each $t\neq 0$.  We will make use of parts of this argument later, see for example Theorem \ref{IRthm}.
\\

\begin{definition}
Let $\mathcal{G}_t(E)$ be the distance-$t$ graph on $E\subseteq \mathbb{F}_q^d$, i.e. there is an edge $x\sim y$ whenever $||x-y||=t$.
\end{definition}
\v
\n Thus, the above discussion can be formulated in terms of counting edges in $\mathcal{G}_t(E)$, or equivalently counting embeddings of the graph with two vertices and one edge into $\mathcal{G}_t(E)$.  It is natural to extend this study to counting embeddings $G\to\mathcal{G}_t(E)$ for other graphs $G$.  Indeed, this was done for paths of arbitrary length by Bennett, Chapman, Covert, Hart, Iosevich, and Pakianathan \cite{BCCHIP16}, and was done for cycles of arbitrary length and for trees by the second listed author in joint work with Iosevich and Jardine \cite{IJM21}.  The tools used to handle sparse graphs like paths and cycles are somewhat different from the tools used for dense graphs, e.g. the complete graph $K_n$.  Iosevich and Parshall were able to formulate a very general framework which can apply to arbitrary graphs $G$, with bounds depending on the number of vertices and the maximum vertex degree.  In general, taking an approach which applies to a wide family of graphs generally results in weaker bounds than taking an approach specifically tailored to a narrower family of graphs.
\\

\n Another generalization of the Erd\H{o}s distance problem over finite fields is to consider the generalized distance set $\Delta_G(E)$ for a given graph $G$.

\v

\begin{definition}
Given a graph $G$ with vertex set $V=\{v_1,...,v_n\}$ and edge set $\mathcal{E}$, and a set $E\subseteq \mathbb{F}_q^d$, let
$$
\Delta_G(E):=\{(t_{ij})_{ij\in \mathcal{E}}: \exists x^1,...,x^n\in E, \ ||x^i-x^j||=t_{ij} \ \text{whenever} \ v_i\sim v_j\}
$$
    
\end{definition}

\v

\n This has been studied in the context $G=K_n$ by Bennett, Hart, Iosevich, Pakianathan, and Rudnev \cite{BHIPR17}, and in the context when $G$ is a tree by \cite{IJM21}.  The approaches are quite different, as trees are maximally flexible and complete graphs are maximally rigid.  The second listed author, with Aksoy and Iosevich \cite{AIM24}, developed an interpolation scheme utilizing Fourier restriction to handle cases of graphs with both rigid and flexible components.  This technique was extended by the SMALL REU 2024 \cite{SMALL24} to a wider class of graphs.
\\

\subsection{VC-dimension and learning theory} One family of graphs which has been increasingly studied in the context of distance problems in the last few years arises from the notion of VC-dimension, developed by Vapnik and Chervonenkis \cite{VC71} in 1968 in the context of machine learning theory, particularly the paradigm of PAC learning (probably approximately correct).  We will introduce the central ideas here, for a more thorough introduction to the subject see for example \cite{DS14}.

\begin{definition}
    Let X be a set and $\cH$ collection of functions from X to $\{0,1\}.$ We say that $\cH$ shatters a finite set $C \subset X$ if the restriction of $\cH$ to $C$ yields every function from C to $\{0,1\}.$
\end{definition}

\begin{definition}
    Let X and H be defined as above. We say that a non-negative integer n is the VC-Dimension of $\cH$ if there exists a $C \subset X$ of size n such that $\cH$ shatters C, and no subset of size $n+1$ is shattered by $\cH$.
\end{definition}

\begin{remark}
Shattering is a monotone condition, i.e. if $C'\subseteq C$ and $\mathcal{H}$ shatters $C$, then $\mathcal{H}$ shatters $C'$ as well.  Moreover if $\mathcal{H}\subseteq \mathcal{H}'$ and $\mathcal{H}$ shatters $C$, then $\mathcal{H}'$ shatters $C$ as well.
    
\end{remark}

\begin{definition}
Let $G_n$ be the graph which represents shattering $n$ points, i.e. $G_n$ is the graph with vertex set
$$
\{x^1,...,x^n\}\cup\{y^I: I\subseteq[n]\},
$$
\n with an edge $x^i\sim y^I$ whenever $i\in I$.  See for example $G_3$ in Figure \ref{vc3_graph}.
    
\end{definition}

\begin{figure}[h!]
\centering

\begin{tikzpicture}[scale=2, thick]
  \node[fill=black, circle, minimum size=4pt, inner sep=0pt, label=below:$y^{123}$] (y123) at (0,0) {};
  \node[fill=black, circle, minimum size=4pt, inner sep=0pt, label=below left:$x^2$] (x2) at (0,1) {};
  \node[fill=black, circle, minimum size=4pt, inner sep=0pt, label=right:$x^1$] (x1) at (-.707,.707) {};
  \node[fill=black, circle, minimum size=4pt, inner sep=0pt, label=above right:$x^3$] (x3) at (.707,.707) {};
  \node[fill=black, circle, minimum size=4pt, inner sep=0pt, label=above left:$y^{12}$] (y12) at (-.707,1.707) {};
  \node[fill=black, circle, minimum size=4pt, inner sep=0pt, label=above right:$y^{23}$] (y23) at (.707,1.707) {};
  \node[fill=black, circle, minimum size=4pt, inner sep=0pt, label=above:$y^{13}$] (y13) at (0,1.414) {};
  \node[fill=black, circle, minimum size=4pt, inner sep=0pt, label=above left:$y^1$] (y1) at (-1.707,.707) {};
  \node[fill=black, circle, minimum size=4pt, inner sep=0pt, label=above right:$y^3$] (y3) at (1.707,.707) {};
  \node[fill=black, circle, minimum size=4pt, inner sep=0pt, label=below right:$y^2$] (y2) at (.707,.293) {};
  \node[fill=black, circle, minimum size=4pt, inner sep=0pt, label=above left:$y^\varnothing$] (y0) at (-1.707,1.707) {};
  \node at (0,2) {\LARGE $G_3$};

  \draw (y123) -- (x1);
  \draw (y123) -- (x2);
  \draw (y123) -- (x3);
  \draw (y12) -- (x1);
  \draw (y12) -- (x2);
    \draw (y13) -- (x1);
    \draw (y13) -- (x3);
    \draw (y23) -- (x2);
    \draw (y23) -- (x3);
    \draw (y1) -- (x1);
    \draw (y2) -- (x2);
    \draw (y3) -- (x3);
\end{tikzpicture}

\caption{The graph $G_3$ represents shattering 3 points.}\label{vc3_graph}
\end{figure}
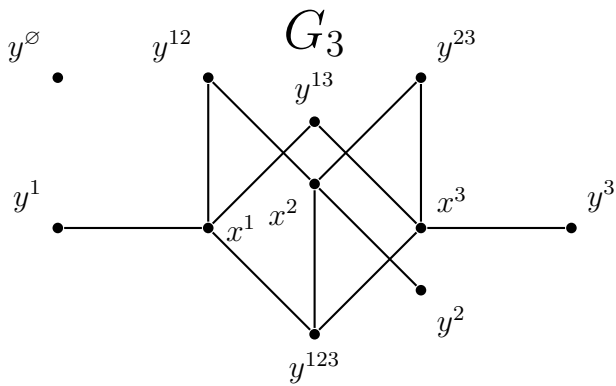
\v

\n Haussler and Welzl \cite{HW87} demonstrated examples of set systems whose VC-dimension is determined by their geometry in 1987. In 1995 Haussler \cite{H95} showed that assuming bounded VC-dimension of a subset of the Boolean cube yields improved bounds for sphere packing with respect to the hamming distance, compared to the result of Dudley \cite{D78} in 1978.  More recently, in 2019 and 2021 Fox, Pach, and Suk \cite{FPS19, FPS21} showed that the Schur-Erd\H{o}s conjecture, as well as a slightly weaker version of the Erd\H{o}s-Hajnal conjecture, both hold for graphs with bounded VC-dimension.  In 2025 Nguyen, Scott, and Seymour \cite{NSS25} strengthened this and showed that the Erd\H{o}s-Hajnal conjecture holds for graphs of bounded VC-dimension.
\v

\n In recent years there have also been a handful of results on VC-dimension of hypothesis classes defined over finite fields, in the context of distance sets, hyperplanes, and quadratic residues, and pseudo-random graphs \cite{SMALL22,SMALL23,FIMW23,IMS23,MSW,PSTV22,PSTT25}.  These results generally take the form of defining a hypothesis class $\mathcal{H}(E)$ depending on a set $E\subseteq \mathbb{F}_q^d$, and determining thresholds on the size of $E$ to guarantee that $\mathcal{H}(E)$ has the same VC-dimension as the ambient space $\mathbb{F}_q^d$.  Recently the second listed author, in joint work with Iosevich, Magyar, and A. McDonald \cite{IMMM25}, extended these techniques to fractals in Euclidean space as well.  In this context one seeks a threshold for the Hausdorff dimension of $E\subseteq \mathbb{R}^d$, rather than cardinality.  
\v

\n

\n Our new contribution here is as follows: The condition that $||x-y||=t$ can be reformulated as $x-y\in S$, where $S$ is the circle of radius $t$.  The central question we seek to answer here is how the analysis changes by considering different sets $S$.  In this investigation, we need to understand how the techniques of these other papers depend on specific properties of circles and spheres.  In particular, we find that most of these results boil down to having good bounds on the Fourier transform of a sphere, understanding intersection properties of spheres, circles, hyperplanes, etc., and having precise estimates for the number of points on a sphere.  Theorem \ref{shatter3} demonstrates that a many of these results can be derived purely from these properties.  We are then able to extend the constructions of \cite{FIMW23} not only to a larger class of algebraic curves and surfaces, but also to random sets as in Theorem \ref{VC_random}.
\\

\n On the other hand, many of the results in this area rely on symmetry of spheres in an essential way by making use of the action of the orthogonal group.  Because of this, we explicitly cannot extend the techniques of \cite{AIM24,BHIPR17,SMALL24,M20} to this context.
\v

\subsection{Discrete Fourier analysis preliminaries}

We will need some basic facts about Fourier analysis over finite fields, and here we record our normalization conventions.
\v

\begin{definition}
    Let $\chi$ be a nontrivial additive character on $\mathbb{F}_q$, so that every additive character over $\mathbb{F}_q^d$ can be represented in the form
    $$
    \chi_m(x)=\chi(m\cdot x).
    $$
    The choice of nontrivial $\chi\in \hat{\mathbb{F}}_q$ is arbitrary, but will remain fixed throughout.
\end{definition}
\v

\begin{proposition}[Orthogonality of characters]
Let $\delta_0$ be the Dirac delta.  Then,
$$
\sum_{m\in \mathbb{F}_q^d}\chi(m\cdot x)=q^d\delta_0(x)=\left\{\begin{array}{ll}
q^d & :x=0 \\
0 & :x\neq 0
\end{array}\right.
$$
    
\end{proposition}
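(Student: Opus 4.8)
The plan is to reduce the $d$-dimensional character sum to a product of one-dimensional sums and then invoke the standard vanishing of a nontrivial additive character sum over $\mathbb{F}_q$. First I would dispose of the case $x=0$: here $\chi(m\cdot 0)=\chi(0)=1$ for every $m\in\mathbb{F}_q^d$, so the sum equals $|\mathbb{F}_q^d|=q^d$, which matches the claimed value.

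Now suppose $x\neq 0$. Since $m\cdot x=m_1x_1+\cdots+m_dx_d$ and $\chi$ is an additive character, we have $\chi(m\cdot x)=\prod_{j=1}^d\chi(m_jx_j)$, and summing each coordinate independently gives
$$
\sum_{m\in\mathbb{F}_q^d}\chi(m\cdot x)=\prod_{j=1}^d\left(\sum_{m_j\in\mathbb{F}_q}\chi(m_jx_j)\right).
$$
Because $x\neq 0$, some coordinate $x_{j_0}$ is nonzero, so it suffices to show that the corresponding factor vanishes.

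The key step is the one-dimensional claim: for $a\in\mathbb{F}_q\setminus\{0\}$ we have $\sum_{t\in\mathbb{F}_q}\chi(at)=0$. Since $t\mapsto at$ is a bijection of $\mathbb{F}_q$, this sum equals $\sum_{s\in\mathbb{F}_q}\chi(s)$; and as $\chi$ is nontrivial there is $b\in\mathbb{F}_q$ with $\chi(b)\neq 1$, so reindexing $s\mapsto s+b$ yields $\chi(b)\sum_{s}\chi(s)=\sum_{s}\chi(s+b)=\sum_{s}\chi(s)$, whence $(\chi(b)-1)\sum_{s}\chi(s)=0$ and therefore $\sum_{s}\chi(s)=0$. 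Taking $a=x_{j_0}$ shows the displayed product is zero, completing the argument.

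There is essentially no obstacle here; the only point worth flagging is that nontriviality of $\chi$ is used in an essential way in the last step (without it the sum would be $q$ rather than $0$), which is precisely why the preceding definition fixes a nontrivial $\chi$.
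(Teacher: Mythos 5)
Your proof is correct: the factorization into one-dimensional sums, the bijection $t\mapsto at$ for $a\neq 0$, and the shift-by-$b$ argument using nontriviality of $\chi$ constitute the standard argument, which is exactly what the paper implicitly relies on (it states this orthogonality as a background fact without proof). Nothing is missing, and you correctly flag that nontriviality of $\chi$ is the essential hypothesis.
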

\v

\begin{definition}
For a function $f:\mathbb{F}_q^d\to \mathbb{C}$, the Fourier transform of $f$ is defined by
\v

$$
\hat{f}(m)=q^{-d}\sum_{x\in \mathbb{F}_q^d}\chi(-m\cdot x)f(x).
$$
    
\end{definition}
\v

\begin{proposition}[Fourier inversion]
    We have the inversion formula
    \v

    $$
    f(x)=\sum_{m\in \mathbb{F}_q^d}\chi(m\cdot x)\hat{f}(m).
    $$
\end{proposition}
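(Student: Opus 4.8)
The plan is to substitute the definition of the Fourier transform into the right-hand side and reduce everything to the orthogonality relation already recorded above. First I would replace $\hat{f}(m)$ by $q^{-d}\sum_{y\in\mathbb{F}_q^d}\chi(-m\cdot y)f(y)$ inside the sum over $m$, so that the right-hand side becomes a double sum $q^{-d}\sum_{m\in\mathbb{F}_q^d}\sum_{y\in\mathbb{F}_q^d}\chi(m\cdot x)\chi(-m\cdot y)f(y)$. Since $\mathbb{F}_q^d$ is finite, both sums are finite and I may interchange the order of summation without any convergence concerns, pulling the sum over $y$ to the outside and collecting $f(y)$ together with the inner sum over $m$.

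Next I would simplify the inner character sum. Because $\chi$ is an additive character and the pairing $(m,x)\mapsto m\cdot x$ is $\mathbb{Z}$-bilinear on $\mathbb{F}_q^d$, we have $\chi(m\cdot x)\chi(-m\cdot y)=\chi(m\cdot x-m\cdot y)=\chi\big(m\cdot(x-y)\big)$, so the inner sum is $\sum_{m\in\mathbb{F}_q^d}\chi\big(m\cdot(x-y)\big)$. By the orthogonality of characters proposition, this equals $q^d\delta_0(x-y)$, i.e.\ it is $q^d$ when $y=x$ and $0$ otherwise.

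Substituting this back, the outer sum over $y$ collapses to the single surviving term $y=x$, leaving $q^{-d}\cdot q^d\cdot f(x)=f(x)$, which is exactly the claimed inversion formula.

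There is essentially no genuine obstacle here: the computation is routine bookkeeping once the orthogonality relation is in hand. The only points that merit a moment's care are the legitimacy of swapping the two finite sums (immediate) and the identity $\chi(m\cdot x)\chi(-m\cdot y)=\chi(m\cdot(x-y))$, which relies only on $\chi$ being a homomorphism from the additive group of $\mathbb{F}_q$ to the unit circle together with bilinearity of the dot product; in particular it uses $\overline{\chi(t)}=\chi(-t)$, valid since $\chi$ takes values in roots of unity.
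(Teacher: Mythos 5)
Your proof is correct: substituting the definition of $\hat{f}$, interchanging the finite sums, and applying the orthogonality of characters proposition is exactly the standard argument, and the paper itself states this inversion formula as a routine preliminary without proof, implicitly relying on the same orthogonality relation you invoke. No gaps; your remark about $\overline{\chi(t)}=\chi(-t)$ is not even needed, since the factor $\chi(-m\cdot y)$ already appears directly in the definition of the Fourier transform and only the homomorphism property of $\chi$ is used.
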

\v

\begin{proposition}[Plancharel identity]
$$
\sum_{x\in \mathbb{F}_q^d}|f(x)|^2=q^d\sum_{m\in \mathbb{F}_q^d}|\hat{f}(m)|^2.
$$
    
\end{proposition}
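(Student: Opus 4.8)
The plan is to derive the Plancherel identity directly from the two propositions recorded immediately above it, namely Fourier inversion and orthogonality of characters; no new input is needed. First I would write $|f(x)|^2 = f(x)\overline{f(x)}$ and substitute the inversion formula $f(x)=\sum_{m}\chi(m\cdot x)\hat f(m)$ into each of the two factors. For the conjugated factor I use that a (nontrivial, hence any) additive character of $\mathbb{F}_q$ takes values in the group of $p$-th roots of unity, where $q=p^r$, so $|\chi(t)|=1$ and therefore $\overline{\chi(t)}=\chi(-t)$; consequently $\overline{f(x)}=\sum_{m'}\chi(-m'\cdot x)\,\overline{\hat f(m')}$.

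Next I would interchange the finite sums and isolate the sum over $x$, obtaining
$$
\sum_{x\in\mathbb{F}_q^d}|f(x)|^2=\sum_{m,m'\in\mathbb{F}_q^d}\hat f(m)\,\overline{\hat f(m')}\sum_{x\in\mathbb{F}_q^d}\chi\big((m-m')\cdot x\big).
$$
By the orthogonality proposition the inner sum equals $q^d\delta_0(m-m')$, so only the diagonal terms $m=m'$ contribute, and the right-hand side collapses to $q^d\sum_{m\in\mathbb{F}_q^d}|\hat f(m)|^2$, which is exactly the asserted identity.

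There is essentially no obstacle here: the only point meriting a sentence of care is the relation $\overline{\chi(t)}=\chi(-t)$, which uses only that $\chi$ is a homomorphism from $(\mathbb{F}_q,+)$ into the unit circle. Everything else is a rearrangement of finitely many terms, with no convergence concerns, so the identity follows immediately once orthogonality is invoked.
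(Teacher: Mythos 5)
Your proof is correct: expanding $|f(x)|^2$ via the inversion formula, using $\overline{\chi(t)}=\chi(-t)$, interchanging the finite sums, and collapsing to the diagonal by orthogonality of characters is exactly the standard argument, and it is consistent with the paper's normalization $\hat{f}(m)=q^{-d}\sum_x\chi(-m\cdot x)f(x)$. The paper states this Plancherel identity as a preliminary proposition without proof, so there is nothing to contrast with; your derivation is the intended one (one could equally expand $q^d\sum_m|\hat f(m)|^2$ directly from the definition of $\hat f$, which is the same computation run in reverse).
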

\v

\subsection{Results}
\v

\begin{definition}
For $E\subseteq \mathbb{F}_q^d$, consider the hypothesis class $\cH_S(E)$, with domain $X=E$, defined by
$$
\cH_S(E):=\{h_y: y\in E\}, \ \ \ h_y(x):=\left\{\begin{array}{ll}
1 & x-y\in S \\
0 & x-y\notin S
\end{array}\right.
$$
    
\end{definition}

\begin{definition}
We say that $S\subseteq \mathbb{F}_q^d$ is a $\gamma$-Salem set if for all $m\neq 0$,
$$
|\hat{S}(m)|\leq q^{-d}(\log{q})^{\gamma}|S|^{\frac{1}{2}}.
$$
In the case $\gamma=0$, we simply call it a Salem set.
\end{definition}
\v

\begin{remark}
We will need this notion of a $\gamma$-Salem set for the case when $S$ is a randomly chosen set.  For all the other results we could simply work with Salem sets and not keep track of this logarithmic factor.  However, it is convenient to take a unified approach which handles both at the same time, and one can always set $\gamma=0$ and ignore the logarithmic factor in settings when it is not needed.
    
\end{remark}

\begin{definition}
Given $E,S\subseteq \mathbb{F}_q^d$, let $\mathcal{G}_S(E)$ be the directed graph with vertex set $E$, and with an edge $x\sim y$ whenever $x-y\in S$.  In the case when $S$ is symmetric, i.e. $S=-S$, we may consider $\mathcal{G}_S(E)$ to be an undirected graph.

\end{definition}

\begin{theorem}\label{shatter3}
    Suppose $S \subset \bF_q^d$ is a $\gamma$-Salem set, symmetric in the sense $S=-S$, $|S|=Kq^{d-1}(1+O(q^{-\alpha}))$ for a constant $K$, $\alpha>\frac{d-1}{2}$, and $|S\cap (S-x)|\leq 2q^{\beta}$, $\beta<\frac{d-1}{2}$, for all nonzero $x\in \mathbb{F}_q^d$. If $E \subset \bF_q^d$ with $|E| \ge cq^\frac{7d+1}{8}$, for sufficiently large constant c. Then the VC-dimension of $\mathcal{H}_S^2(E)$ is at least $3$.
\end{theorem}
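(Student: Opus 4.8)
The plan is to reduce the VC-dimension bound to a purely combinatorial statement about translates of $S$, and then to prove that statement by a Fourier-analytic counting argument driven entirely by the three hypotheses on $S$. First I would unwind the definition of $\mathcal{H}_S^2(E)$ (the $2$-fold union $\{(S+a)\cup(S+b):a,b\in E\}$): since $S=-S$, we have $x\in S+a\iff a\in S+x$, so a hypothesis $h_{a,b}$ realizes the labelling $I\subseteq\{1,2,3\}$ on $\{x^1,x^2,x^3\}$ precisely when $\{i:a\in S+x^i\text{ or }b\in S+x^i\}=I$. Running through the eight subsets shows that $\{x^1,x^2,x^3\}$ is shattered as soon as we can find distinct $x^1,x^2,x^3\in E$ together with points $e_0,a_1,a_2,a_3,a_{12}\in E$ such that $e_0\notin S+x^i$ for all $i$; $a_i\in S+x^i$ but $a_i\notin S+x^j$ for $j\neq i$; and $a_{12}\in(S+x^1)\cap(S+x^2)$. (The point $e_0$ handles $I=\varnothing$ and is the "inert" second translate for the singletons; the $a_i$ handle the singletons and the doubletons; and $a_{12}$ together with $a_3$ handles the full set by covering $\{x^1,x^2\}$ with one translate and $\{x^3\}$ with another — this last step being the only place the $2$-fold union is genuinely needed.) So it suffices to show that the number $\mathcal{N}$ of such $8$-tuples in $E^8$ is positive.

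Next I would estimate $\mathcal{N}$ by writing $\mathcal{N}=\mathcal{N}_0-(\text{corrections})$, where $\mathcal{N}_0$ counts the $8$-tuples satisfying only the membership conditions $a_i\in S+x^i$ and $a_{12}\in(S+x^1)\cap(S+x^2)$, and the corrections enforce the separations $a_i\notin S+x^j$, the avoidance $e_0\notin S+x^i$, and the distinctness of the $x^i$. The quantity $\mathcal{N}_0$ is the number of homomorphic copies, with vertices in $E$, of the forest made of a path $a_1-x^1-a_{12}-x^2-a_2$, an edge $x^3-a_3$, and an isolated vertex $e_0$, inside the distance-$S$ graph $\mathcal{G}_S(E)$. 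Its main term is $|S|^{5}|E|^{8}q^{-5d}$, and by the point count $|S|=Kq^{d-1}(1+O(q^{-\alpha}))$ (with $\alpha>\frac{d-1}{2}$) this is $(1+o(1))K^5q^{-5}|E|^8$; the matching lower bound $\mathcal{N}_0\gtrsim |S|^5|E|^8q^{-5d}$ follows from iterated Cauchy--Schwarz starting from the incidence count $\#\{(a,x)\in E^2:a-x\in S\}$, which Plancherel and the $\gamma$-Salem bound evaluate as $\tfrac{|S||E|^2}{q^d}(1+o(1))$ once $|E|\gg q^{(d+1)/2}$, a condition implied by $|E|\geq cq^{(7d+1)/8}$. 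Each correction is the analogous count for the same forest with one extra edge added — an edge $a_i-x^j$, an edge $e_0-x^i$, or the identification $x^i=x^j$ (which collapses part of the path into a star) — and expanding in Fourier, using the $\gamma$-Salem bound on every new cycle or high-multiplicity vertex and the intersection bound $|S\cap(S-x)|\leq 2q^{\beta}$ at any vertex (notably $a_{12}$) forced to lie on two translates of $S$ at once, each correction is shown to be $O(q^{-\eta})\,\mathcal{N}_0$ for some $\eta>0$. Summing, $\mathcal{N}=(1+o(1))\mathcal{N}_0>0$.

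The arithmetic that makes every correction negligible is exactly $|E|\geq cq^{(7d+1)/8}$ with $c$ large: $\alpha>\frac{d-1}{2}$ makes the error in the point count a genuine power saving, $\beta<\frac{d-1}{2}$ does the same for each two-fold intersection, and the exponent $\frac{7d+1}{8}$ is the worst of the thresholds that result — it comes from the interaction of $|S\cap(S-x)|\leq 2q^{\beta}$ with the vertices $x^1,x^2$, which each meet several of the auxiliary translates, so that the corresponding error term (of order $q^{\beta}|S|^{c_1}|E|^{c_2}$) is beaten by $\mathcal{N}_0$ precisely in this range. The hard part is this bookkeeping step: one must control all twelve correction terms simultaneously, and the delicate case is the full subset $\{1,2,3\}$, i.e. the requirement $a_{12}\in(S+x^1)\cap(S+x^2)$, where the hypothesis $\beta<\frac{d-1}{2}$ is used essentially and where $\frac{7d+1}{8}$ is forced. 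Everything else — the combinatorial reduction, the incidence count, and the leading-order evaluation of $\mathcal{N}_0$ — is routine given the $\gamma$-Salem property and the point count; in particular no property of $S$ is used beyond $S=-S$, which is why the argument covers algebraic hypersurfaces and random sets alike.
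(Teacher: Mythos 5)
Your reading of the class $\mathcal{H}_S^2(E)$ as the family of two-fold unions $\{(S+a)\cup(S+b):a,b\in E\}$ is not what the theorem is about: the superscript is a typo, and the intended class is $\mathcal{H}_S(E)$, the class of \emph{single} translates $h_y$, $y\in E$, defined earlier in the paper. This is forced by how the theorem is used: Corollary \ref{VC3} and Theorem \ref{VC_quadratic} invoke Theorem \ref{shatter3} to conclude $\mathrm{VCdim}(\mathcal{H}_S(E))=3$, and a lower bound for the union class does not imply one for the single-translate class. Under the correct reading, your combinatorial reduction is too weak: to shatter $\{x^1,x^2,x^3\}$ with single translates you must produce a point $y^{123}\in E$ with $x^i-y^{123}\in S$ for all three $i$ simultaneously, and points $y^{ij}\in E$ adjacent (in $\mathcal{G}_S(E)$) to exactly two of the $x^i$, as in Figure \ref{vc3_graph}. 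Your scheme deliberately avoids ever requiring a point of $E$ to lie on three translates at once (you cover $\{1,2,3\}$ by one translate through $x^1,x^2$ and another through $x^3$), and it only ever needs a single common neighbor $a_{12}$ of one pair. That is exactly the part of the problem the paper's machinery exists to solve: Lemma \ref{salem_edge} and the pigeonhole Corollary \ref{pigeon} produce a popular difference $v\in S$, Lemma \ref{rhombus} builds a rhombus inside $E_v=E\cap(E-v)$, and Corollary \ref{cube} assembles these into a $Q_3$-minus-a-vertex subgraph of $\mathcal{G}_S(E)$ whose vertices, after relabeling, serve as $x^1,x^2,x^3$, $y^{123}$ and the three $y^{ij}$; the remaining points $y^1,y^2,y^3,y^{\varnothing}$ are then found via the pruning Lemma \ref{pruning} and the intersection hypothesis $|S\cap(S-x)|\leq 2q^{\beta}$. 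None of this is recoverable from your counting of the forest $a_1-x^1-a_{12}-x^2-a_2$, $x^3-a_3$, $e_0$, so the gap is structural rather than a matter of finishing the bookkeeping.

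A symptom of the mismatch is the threshold: your own outline correctly observes that everything you need (incidence count plus the pair intersection bound, with $\beta<\frac{d-1}{2}$) already works once $|E|\gtrsim q^{\frac{d+1}{2}}(\log q)^{\gamma}$ --- indeed your configuration is essentially at the difficulty level of the paper's Lemma \ref{shatter2}, which proves $\mathrm{VCdim}(\mathcal{H}_S(E))\geq 2$ at that threshold --- so the claim that the exponent $\frac{7d+1}{8}$ is ``forced'' by your $a_{12}$ case does not hold up. In the paper the exponent comes from iterating the construction: one applies Lemma \ref{rhombus} not to $E$ but to the popular slice $E_v$ with $|E_v|\gtrsim |E|^2/q^d$, and requiring $|E_v|\gtrsim q^{\frac{3d+1}{4}}$ yields $|E|\gtrsim q^{\frac{7d+1}{8}}$. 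If you want to salvage your write-up, the fix is not more correction terms but a different target configuration: prove the existence in $\mathcal{G}_S(E)$ of the cube-minus-a-vertex (or otherwise of a common neighbor of all three $x^i$ together with pairwise common neighbors), for which the symmetry $S=-S$ and the two-step pigeonhole/rhombus argument are the essential ingredients.
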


\begin{corollary}\label{VC3}
If $S$ and $E$ satisfy the conditions of Theorem \ref{shatter3}, and $\beta=0$, then we can conclude that
$$
\text{VCdim}(\mathcal{H}_S(E))=3.
$$
    
\end{corollary}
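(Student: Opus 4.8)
The plan is to obtain $\text{VCdim}(\mathcal{H}_S(E)) = 3$ by pairing the lower bound already contained in Theorem~\ref{shatter3} with a short upper bound argument that is exactly where the extra hypothesis $\beta = 0$ enters. For the lower bound, Theorem~\ref{shatter3} (whose hypotheses we are assuming) produces a shattered $3$-element subset of $E$; since shattering is monotone under enlarging the hypothesis class, this gives $\text{VCdim}(\mathcal{H}_S(E)) \ge 3$ at once (with the harmless caveat that if $\mathcal{H}_S^2(E)$ there denotes a variant of $\mathcal{H}_S(E)$, one must first note that shattering transfers in the needed direction). So the real work is the matching bound $\text{VCdim}(\mathcal{H}_S(E)) \le 3$, i.e. showing that no $4$-point subset of $E$ is shattered.

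For the upper bound I would argue by contradiction. Suppose $C = \{x^1,x^2,x^3,x^4\} \subseteq E$ is shattered. Then for each $I \subseteq [4]$ there is a center $y^I \in E$ with $x^i - y^I \in S \iff i \in I$, and distinct label sets $I$ force distinct centers $y^I$, since $y^I$ determines $h_{y^I}|_C$. Now fix the pair $\{1,2\}$ and look at the label sets containing it, e.g. $\{1,2\}$, $\{1,2,3\}$, $\{1,2,3,4\}$: each corresponding center satisfies $x^1 - y^I \in S$ and $x^2 - y^I \in S$, hence lies in $(x^1 - S) \cap (x^2 - S)$, which by $S = -S$ equals $(x^1 + S) \cap (x^2 + S)$. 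Translating by $-x^1$ identifies this set bijectively with $S \cap \bigl(S - (x^1 - x^2)\bigr)$, and since the $x^i$ are distinct, $x^1 - x^2 \neq 0$, so the hypothesis $|S \cap (S-x)| \le 2q^\beta$ with $\beta = 0$ forces $|(x^1+S)\cap(x^2+S)| \le 2$. But we have exhibited three distinct centers inside this intersection — a contradiction. Hence $\mathcal{H}_S(E)$ shatters no $4$-set, giving $\text{VCdim}(\mathcal{H}_S(E)) \le 3$, and combined with the lower bound, equality.

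I do not expect a genuine obstacle in the corollary itself: all the substantive difficulty — constructing a shattered $3$-point configuration inside $E$, which is where the Fourier decay of $S$, the precise count $|S| = Kq^{d-1}(1+O(q^{-\alpha}))$, and the intersection bound are used to rule out accidental incidences — has already been absorbed into Theorem~\ref{shatter3}. What remains to be careful about is purely bookkeeping: applying $S=-S$ correctly to replace $x^i - S$ by $x^i + S$; checking that the label sets used are genuinely distinct so that they contribute distinct centers; and recording that the distinctness of the $x^i$ is exactly what licenses the use of $|S\cap(S-x)|\le 2q^\beta$, which is vacuous at $x=0$.
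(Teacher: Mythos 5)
Your proof is correct and is essentially the paper's argument with the roles of points and centers swapped: the paper derives the contradiction by placing the three shattered points $x^1,x^2,x^3$ in $(y^{1234}+S)\cap(y^{123}+S)$, while you place the three distinct centers $y^{12},y^{123},y^{1234}$ in $(x^1-S)\cap(x^2-S)$; both reduce to the same hypothesis $|S\cap(S-x)|\le 2$ for $x\neq 0$ together with the lower bound from Theorem~\ref{shatter3}. Your version needs the (easy, correctly supplied) observation that distinct label sets force distinct centers, where the paper instead needs $y^{1234}\neq y^{123}$; also note that the symmetry $S=-S$ is not actually required in either form, since $z\mapsto x^1-z$ already identifies $(x^1-S)\cap(x^2-S)$ with $S\cap\bigl(S-(x^2-x^1)\bigr)$.
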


\begin{proof}
    we need to check that it is impossible to shatter 4 points.  This is straightforward, since if we have some shattering configuration
$$
\{x^i: 1\leq i \leq 4\}\cup \{y^I: I\subseteq [4]\}\subseteq E,
$$
such that $x^i-y^I\in S$ if and only if $i\in I$, we see that
$$
\{x^1,x^2,x^3\}\subseteq(y^{1234}+S)\cap(y^{123}+S),
$$
contradicting that this intersection can have at most 2 points.
\end{proof}
\v

\n In Section \ref{examples}, we will demonstrate some applications of Theorem \ref{shatter3}.  In particular, we generalize \cite{FIMW23} to hypothesis classes over $\mathbb{F}_q^2$ defined by arbitrary quadratic curves, as opposed to circles.  The same framework allows us to consider random sets.

\begin{theorem}\label{VC_random}
Let $S$ be chosen uniformly at random from the subsets of $\mathbb{F}_q^2$ of size $q$.  Let $T=S\cup (-S)$.  Then, with probability $1-o(1)$,
$$
\text{VCdim}(\mathcal{H}_S(E))\geq 2 \ \ \ \text{and} \ \ \ \text{VCdim}(\mathcal{H}_T(E))\geq 3.
$$
    
\end{theorem}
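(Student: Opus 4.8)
The plan is to verify that the random sets $S$ and $T=S\cup(-S)$ satisfy, with probability $1-o(1)$, the structural hypotheses that force large VC-dimension. For the symmetric set $T$ we check all the hypotheses of Theorem \ref{shatter3} with $d=2$; for $S$ itself we check the hypotheses of the corresponding statement for shattering two points, which needs only the Salem and cardinality assumptions, since shattering two points never forces two translates of $S$ to meet in more than one point. Throughout we assume $E\subseteq\mathbb{F}_q^2$ satisfies the cardinality hypothesis $|E|\gtrsim q^{15/8}$ of Theorem \ref{shatter3} for $d=2$ (in particular $E=\mathbb{F}_q^2$ qualifies). Granting the hypotheses, the two conclusions $\text{VCdim}(\mathcal{H}_T(E))\ge 3$ and $\text{VCdim}(\mathcal{H}_S(E))\ge 2$ follow at once from those theorems, so the entire proof reduces to a short list of large-deviation estimates for a uniformly random $q$-element subset of $\mathbb{F}_q^2$.

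For the Salem property, fix $m\neq 0$ and write $\hat{S}(m)=q^{-2}\sum_{x\in S}\chi(-m\cdot x)$; this is a normalized sum of $|S|=q$ bounded terms over a sample drawn without replacement from $\{\chi(-m\cdot x):x\in\mathbb{F}_q^2\}$, a family with mean zero by orthogonality of characters. A Hoeffding--Serfling inequality for sampling without replacement, applied to the real and imaginary parts, gives $\Pr\bigl[|\sum_{x\in S}\chi(-m\cdot x)|>C\sqrt{q\log q}\,\bigr]=o(q^{-2})$ for a large absolute constant $C$, and a union bound over the $q^2-1$ nonzero $m$ shows that with probability $1-o(1)$ we have $|\hat{S}(m)|\le Cq^{-3/2}\sqrt{\log q}\le q^{-2}(\log q)^{\gamma}|S|^{1/2}$ for all $m\neq 0$, with (say) $\gamma=1$; indeed any fixed $\gamma>\tfrac12$ works once $q$ is large. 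Since $\widehat{-S}(m)=\overline{\hat{S}(m)}$ and $\hat T=\hat S+\widehat{-S}-\widehat{S\cap(-S)}$ with $|\widehat{S\cap(-S)}(m)|\le q^{-2}|S\cap(-S)|$, the same bound passes to $T$ once $|S\cap(-S)|$ is controlled. The cardinality conditions are easier still: $|S|=q$ exactly, so the $S$-part uses $K=1$ with no error term; and $|S\cap(-S)|=\sum_{x}\mathbf{1}[x\in S]\,\mathbf{1}[-x\in S]$ has expectation $O(1)$ (for odd $q$, in fact exactly $1$), so by Markov's inequality $|S\cap(-S)|\le\log q$ with probability $1-o(1)$, whence $|T|=2q-|S\cap(-S)|=2q\bigl(1+O(q^{-9/10})\bigr)$, giving $K=2$ and $\alpha=\tfrac9{10}>\tfrac12$.

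It remains to bound $|T\cap(T-x)|$ for every nonzero $x$. Since $T=S\cup(-S)$,
\[
T\cap(T-x)\ \subseteq\ \bigcup_{A,B\in\{S,-S\}}A\cap(B-x),
\]
so it suffices to bound the four counts $|A\cap(B-x)|$. Each is a sum of $O(q^2)$ indicator variables of probability $O(q^{-2})$ --- e.g. $|S\cap(S-x)|=\sum_{s}\mathbf{1}[s\in S]\,\mathbf{1}[s+x\in S]$, while $|S\cap(-S-x)|$ counts $s\in S$ with $-s-x\in S$ --- each interacting with only $O(1)$ of the others, so all four have expectation $O(1)$; a Chernoff estimate (in the independent-inclusion model with inclusion probability $q^{-1}$, transferred to the fixed-size model by conditioning on $|S|=q$, which costs only a factor $O(\sqrt q)$) gives $\Pr\bigl[|A\cap(B-x)|>C\log q\bigr]=o(q^{-3})$. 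A union bound over $x\neq 0$ then yields $|T\cap(T-x)|\le 4C\log q\le 2q^{1/4}$ for all $x\neq 0$ once $q$ is large, so $\beta=\tfrac14<\tfrac12$ is admissible.

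Intersecting the finitely many probability-$(1-o(1))$ events above, with probability $1-o(1)$ the set $T$ satisfies every hypothesis of Theorem \ref{shatter3} ($d=2$, $K=2$, $\gamma=1$, $\alpha=\tfrac9{10}$, $\beta=\tfrac14$) and $S$ satisfies those of its two-point analogue, which yields the two claimed bounds. The main obstacle is making the large-deviation estimates strong enough to survive the $q^2$-fold union bounds: the Salem estimate is used precisely at the square-root-cancellation threshold $q^{-3/2}$, so the logarithmic loss incurred in taking a maximum over $m$ cannot be absorbed into the exponent and genuinely forces a power of $\log q$ --- which is exactly what the $\gamma$-Salem definition was introduced to accommodate. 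The remaining subtlety is the mild dependence created by sampling a fixed-size set, which one handles either by concentration inequalities adapted to sampling without replacement or by Poissonizing and conditioning on the size; in each case the underlying tail bounds are super-polynomially small, so the polynomial factors lost in the union bounds and the conditioning are harmless.
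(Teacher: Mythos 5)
Your proposal is correct and follows the same overall strategy as the paper: verify that $S$ and $T=S\cup(-S)$ satisfy (with probability $1-o(1)$) the hypotheses of Lemma \ref{shatter2} and Theorem \ref{shatter3} respectively, then apply those results. The difference lies in the probabilistic ingredients. The paper obtains the $\gamma$-Salem property (with $\gamma=\tfrac12$) by citing Hayes's theorem for random subsets of abelian groups, whereas you re-derive it directly via Hoeffding--Serfling for sampling without replacement plus a union bound over $m\neq 0$; this is essentially a self-contained proof of the special case of Hayes that is needed, at the cost of a slightly worse exponent ($\gamma=1$), which is harmless. More substantively, for the intersection bounds the paper (Lemma \ref{random_properties}(3) and the proof of Theorem \ref{VC_random}) fixes $x$ and applies Markov's inequality, giving failure probability only $q^{-\beta}$ per $x$ --- which, as written, does not survive the union bound over the roughly $q^2$ nonzero shifts $x$, even though the hypotheses of Theorem \ref{shatter3} and Lemma \ref{shatter2} require the bound uniformly in $x$. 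Your Chernoff/Poissonization argument (with the $O(\sqrt q)$ conditioning cost and the bounded-degree dependency observation) yields super-polynomially small tails and therefore genuinely delivers the uniform statement; on this point your route is more careful than the paper's.

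One small inaccuracy in your framing: you assert that the two-point shattering statement ``needs only the Salem and cardinality assumptions,'' but the paper's Lemma \ref{shatter2} (and its proof, via the pruning lemma) does use the hypothesis $|S\cap(S-x)|\leq q^{\beta}$ for all nonzero $x$, precisely to find a neighbor of $x^1$ avoiding the translate through $x^2$. This does not damage your argument, because your uniform bound on $|A\cap(B-x)|$ with $A=B=S$ supplies exactly that hypothesis for $S$; but the justification you give for dispensing with it is not valid as stated, so you should either drop that remark or invoke your own intersection estimate when applying the two-point lemma.
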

\v

\begin{theorem}\label{VC_quadratic}
Let $E\subseteq \mathbb{F}_q^2$, $|E|\geq cq^{\frac{15}{8}}$ for $c$ sufficiently large.  Let
$$
f(x,y)=Ax^2+Bxy+Cy^2+Dx+Ey+F,
$$
and let $Z_f$ be the zero set of $f$. Suppose that
$$
\det\left(\begin{array}{ccc}
A & B/2 & D/2 \\
B/2 & C & E/2 \\
D/2 & E/2 & F
\end{array}
\right)\neq 0.
$$
If 
$$
\det\left(\begin{array}{cc}
A & B/2 \\
B/2 & C
\end{array}\right)\neq 0,
$$
Then there exists a translation $S$ of $Z_f$ such that
$$
2\leq \text{VCdim}(\mathcal{H}_{Z_f}(E))\leq \text{VCdim}(\mathcal{H}_S(E))=3.
$$
Otherwise, if the quadratic part is degenerate, the VC-dimension is equal to 2 for any translation.  In this case, 
$$
\text{VCdim}(\mathcal{H}_T(E))\geq 3,
$$
where $T=S\cup (-S)$.
\end{theorem}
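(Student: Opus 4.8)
I would treat the two cases of the theorem separately, in each case first putting the conic into a normal form by a translation and then invoking Theorem~\ref{shatter3} (with $d=2$, so the threshold $q^{\frac{7d+1}{8}}$ is $q^{15/8}$), Corollary~\ref{VC3}, or an easier two-point shattering analysis. Throughout I take $q$ odd, as is standard for these distance problems.

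\textbf{Nondegenerate quadratic part.} Since $AC-B^2/4\neq 0$, the binary form $Q(x,y)=Ax^2+Bxy+Cy^2$ is nondegenerate, so $f$ has a unique critical point $v_0$; translating by $v_0$ I may take $S:=Z_f-v_0=\{w:Q(w)=c\}$, and the hypothesis that the $3\times 3$ matrix is nonsingular forces $c\neq 0$, whence $S=-S$. I then check the three hypotheses of Theorem~\ref{shatter3}. The count $|S|=q+O(1)$ is the standard one for a nondegenerate plane conic, so $|S|=Kq^{d-1}(1+O(q^{-\alpha}))$ with $K=1$ and $\alpha=1>\frac{d-1}{2}$. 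For the Salem bound I write $\mathbf 1_S(w)=q^{-1}\sum_{t\in\mathbb F_q}\chi(t(Q(w)-c))$: the $t=0$ term contributes nothing to $\hat S(m)$ for $m\neq 0$, and for $t\neq 0$ the sum over $w$ is a complete two-variable Gauss sum of modulus $q$ whose phase is linear in $1/t$, so summing against $\chi(-tc)$ leaves a Kloosterman sum, and Weil's bound gives $|\hat S(m)|\lesssim q^{-3/2}$; hence $S$ is a $\gamma$-Salem set for any $\gamma>0$ (which absorbs the implied constant). For the intersection bound, $S\cap(S-x)$ is cut out by $Q(w)=c$ together with $Q(w+x)-Q(w)=0$, and for $x\neq 0$ the latter is affine-linear and nontrivial in $w$ (as $Q$ is nondegenerate), hence a line, which meets the conic in at most $2$ points (a nondegenerate conic contains no line); so $\beta=0$ is admissible. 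Theorem~\ref{shatter3} together with Corollary~\ref{VC3} then gives $\text{VCdim}(\mathcal H_S(E))=3$. Finally, the line-versus-conic count is translation invariant, so $\text{VCdim}(\mathcal H_{Z_f}(E))\leq 3$, while $\widehat{Z_f}$ has the same modulus as $\hat S$ and the easier two-point analysis (needing only the Salem bound and $|E|\gtrsim q^{15/8}$, not symmetry) gives $\text{VCdim}(\mathcal H_{Z_f}(E))\geq 2$; assembling these yields the displayed chain of inequalities.

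\textbf{Degenerate quadratic part.} Here $Q$ has rank exactly $1$ (it is nonzero, else the $3\times 3$ determinant vanishes), so $Q=\lambda\ell^2$ for a nonzero linear form $\ell$; choosing coordinates $(u,v)$ with $u=\ell$ exhibits $Z_f$ as a graph $\{v=P(u)\}$ with $P$ genuinely quadratic, and every translate is again such a graph with the \emph{same} leading coefficient. Hence two distinct translates meet in at most one point. If some $\mathcal H_S(E)$ shattered three points $x^1,x^2,x^3$, then $x^1,x^2\in (y^{123}+S)\cap(y^{12}+S)$ with these translates distinct (since $x^3$ lies in the first but not the second), which is impossible; thus $\text{VCdim}(\mathcal H_S(E))\leq 2$ for every translate, and with the two-point lower bound this is an equality. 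For the final assertion I set $T=S\cup(-S)$: since the characteristic is odd and $P$ is genuinely quadratic, $S\neq -S$, so $|T|=2q+O(1)$ (giving $K=2$, $\alpha=1$), $T=-T$, and $\hat T=\hat S+\widehat{-S}-\widehat{S\cap(-S)}$ is $\lesssim q^{-3/2}$ because $\hat S$ and $\widehat{-S}$ have one-dimensional Gauss-sum decay and $S\cap(-S)$ has $O(1)$ points; so $T$ is $\gamma$-Salem. Moreover $T\cap(T-x)$ lies in a union of four intersections of pairs of translated parabolas, each of size $\leq 2$, so $|T\cap(T-x)|\leq 6\leq 2q^{\beta}$ with $\beta=\tfrac14<\tfrac12$ once $q$ is large, and Theorem~\ref{shatter3} then gives $\text{VCdim}(\mathcal H_T(E))\geq 3$.

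\textbf{Main obstacle.} The substantive work is the Fourier decay: the Kloosterman-sum bound (via Weil) for the central conics and the Gauss-sum bounds for the parabolas, together with the bookkeeping needed to push the implied constants into the $\gamma$-Salem framework. The geometric inputs — the point counts, the fact that two translated conics with a common quadratic part meet along a line, and that two translated parabolas meet in at most one point — are elementary, as is the two-point lower bound; the one place that needs mild extra care there is that $Z_f$ itself need not be symmetric, so that portion of the argument behind Theorem~\ref{shatter3} must be carried out without the hypothesis $S=-S$.
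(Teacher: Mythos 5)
Your proposal is correct and follows essentially the same route as the paper: reduce the conic to a symmetric normal form by translation, verify the Salem property via Gauss sums and the Weil/Kloosterman bound, use the line--conic (resp.\ translated-parabola) intersection bounds to apply Theorem~\ref{shatter3} and Corollary~\ref{VC3} (resp.\ to cap the VC-dimension at 2), get the two-point lower bound from the symmetry-free Lemma~\ref{shatter2}, and handle the degenerate case's final claim with the symmetrized parabola as in Corollary~\ref{finish}. The only differences are cosmetic: you exploit $Q(-w)=Q(w)$ to get symmetry without fully diagonalizing and run the Kloosterman computation for general nondegenerate $Q$, whereas the paper first normalizes via Lemma~\ref{quadratic_form} and transfers the Salem property with Lemma~\ref{linear}.
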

\v

\begin{remark}
In the setting of Theorem \ref{VC_quadratic}, when the quadratic part of $f$ is non-degenerate, we do not know in general whether $\text{VCdim}(\mathcal{H}_{Z_f}(E))$ is equal to 2 or 3.  We suspect that it might always be equal to 3, but the techniques we develop in Section \ref{main_section} depend on symmetry in an essential way, and so this is left as an open question.
    
\end{remark}
\v

\n Finally, we have one more application of Theorem \ref{shatter3}.

\begin{theorem}\label{odd_function}
Let $E\subseteq \mathbb{F}_q^2$, $|E|\geq cq^{\frac{15}{8}}$ for $c$ sufficiently large.  Consider a degree $n$ polynomial $f\in \mathbb{F}_q[x]$, $p\nmid n$, and let $\Gamma_f:=\{(x,f(x)): x\in \mathbb{F}_q\}$ be the graph of the function.  Then $\text{VCdim}(\mathcal{H}_{\Gamma_f}(E))\geq 2$, and if $f$ is an odd function then $\text{VCdim}(\mathcal{H}_{\Gamma_f}(E))\geq 3$.
\end{theorem}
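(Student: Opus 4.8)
The plan is to obtain both inequalities from the machinery already built for symmetric $\gamma$-Salem sets, once we check that $S:=\Gamma_f$ satisfies the Fourier, cardinality and intersection hypotheses with $d=2$. The one genuinely new ingredient is that the Fourier transform of the graph of a polynomial is an incomplete character sum, governed by the Weil bound precisely when $p\nmid n$; everything else is bookkeeping.

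\emph{Step 1 ($\Gamma_f$ is $\gamma$-Salem and $|\Gamma_f|\approx q^{d-1}$).} Directly from the definition,
$$
\hat{\Gamma_f}(m)=q^{-2}\sum_{x\in\mathbb{F}_q}\chi\bigl(-m_1x-m_2f(x)\bigr).
$$
If $m_2=0$ and $m\neq0$ then $m_1\neq0$ and the inner sum vanishes by orthogonality of characters. If $m_2\neq 0$, then $g(x):=-m_1x-m_2f(x)$ has degree $n$ (we take $n\ge 2$; the case $n=1$ is a line and does not belong to the Salem regime) with leading coefficient $-m_2a_n\neq0$, so since $p\nmid n$ Weil's bound gives $\bigl|\sum_x\chi(g(x))\bigr|\le(n-1)q^{1/2}$, hence $|\hat{\Gamma_f}(m)|\le(n-1)q^{-3/2}$. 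The $\gamma$-Salem condition asks for $|\hat{\Gamma_f}(m)|\le q^{-2}(\log q)^{\gamma}|\Gamma_f|^{1/2}=(\log q)^{\gamma}q^{-3/2}$, and since $n$ is fixed we have $n-1\le(\log q)^{\gamma}$ for any $\gamma>0$ once $q$ is large; thus $\Gamma_f$ is $\gamma$-Salem. Moreover $|\Gamma_f|=q=q^{d-1}$ exactly, so the cardinality hypothesis $|S|=Kq^{d-1}(1+O(q^{-\alpha}))$ holds with $K=1$ and vanishing error term (any $\alpha>\tfrac{d-1}{2}$).

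\emph{Step 2 (intersection bound).} For nonzero $x=(x_1,x_2)$, a point $(a,f(a))$ lies in $\Gamma_f\cap(\Gamma_f-x)$ iff $(a+x_1,f(a)+x_2)\in\Gamma_f$, i.e.\ $f(a+x_1)-f(a)=x_2$. If $x_1=0$ this forces $x_2=0$, which is excluded; if $x_1\neq0$, then $a\mapsto f(a+x_1)-f(a)$ is a polynomial in $a$ of degree exactly $n-1$, with top coefficient $na_nx_1\neq0$ (again using $p\nmid n$), so it has at most $n-1$ roots. Hence $|\Gamma_f\cap(\Gamma_f-x)|\le n-1\le 2q^{\beta}$ for any fixed $\beta\in(0,\tfrac{d-1}{2})$ and $q$ large.

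\emph{Step 3 (the two conclusions).} For $\mathrm{VCdim}(\mathcal{H}_{\Gamma_f}(E))\ge 2$: symmetry is not needed, and one runs the counting scheme behind Theorem~\ref{shatter3} in the easier two-point setting — count the sextuples $(x^1,x^2,y^{\varnothing},y^1,y^2,y^{12})\in E^6$ realizing the shattering pattern in $\mathcal{G}_{\Gamma_f}(E)$, extract a main term of order $|E|^6/q^{4}$ with the Salem bound of Step~1 absorbing the error terms, and check positivity; $|E|\ge cq^{15/8}$ is comfortably more than enough (cf.\ the $\ge 2$ assertion of Theorem~\ref{VC_random}). For $\mathrm{VCdim}(\mathcal{H}_{\Gamma_f}(E))\ge 3$ when $f$ is odd: $f(-a)=-f(a)$ gives $(a,f(a))\in\Gamma_f\iff(-a,-f(a))=(-a,f(-a))\in\Gamma_f$, so $-\Gamma_f=\Gamma_f$; all hypotheses of Theorem~\ref{shatter3} now hold with $d=2$, and since $cq^{(7d+1)/8}=cq^{15/8}$ that theorem yields $\mathrm{VCdim}(\mathcal{H}_{\Gamma_f}(E))\ge 3$. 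The substantive step is Step~1 — identifying $\hat{\Gamma_f}$ as an incomplete exponential sum and invoking Weil's estimate, which is exactly where $p\nmid n$ enters; the remaining care is organizational, namely absorbing the harmless constant $n-1$ into the $(\log q)^{\gamma}$ slack of the $\gamma$-Salem definition and noting that the low-degree degeneracies ($n=1$ gives a line, which is genuinely not Salem; in odd characteristic no odd polynomial has degree $2$) force the intended range $n\ge 2$, with $n\ge 3$ odd for the second assertion.
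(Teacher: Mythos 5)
Your proposal is correct and follows essentially the same route as the paper: the paper's Lemma \ref{odd_lemma} is exactly your Steps 1--2 (Fourier transform of $\Gamma_f$ bounded via the Weil bound, where $p\nmid n$ enters; $|\Gamma_f|=q$; and $|\Gamma_f\cap(\Gamma_f-x)|\le n-1$), and the conclusions are then drawn from the symmetry $\Gamma_f=-\Gamma_f$ for odd $f$ together with Theorem \ref{shatter3}, just as you do. The only difference is that for the $\ge 2$ claim the paper cites its Lemma \ref{shatter2} directly (whose hypotheses are precisely what you verified) instead of re-sketching a configuration count, which is the cleaner way to finish.
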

\v

\section{Examples of Salem sets over finite fields}\label{examples}
\v

\subsection{Explicit examples}
Two explicit examples of Salem sets in $\mathbb{F}_q^d$ are given by \cite{IR07}.
\v

\begin{theorem}[Iosevich-Rudnev]\label{IRthm}
    The sphere of radius $t$, 
$$
\mathcal{S}_t=\{x\in \mathbb{F}_q^d: x_1^2+\cdots +x_d^2=t\},
$$
and the paraboloid
$$
\mathcal{P}=\{(x_1,x_2,\dots,x_{d-1},x_1^2+\cdots + x_{d-1}^2): x_1,\dots,x_{d-1}\in \mathbb{F}_q\}
$$
are both Salem sets.  
\end{theorem}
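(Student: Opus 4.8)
The plan is to compute both Fourier transforms in closed form via Gauss sums, so that the Salem bound reduces to classical one–variable exponential–sum estimates. Throughout one assumes $q$ odd, so $2$ is invertible and one has the quadratic character $\eta$ and the Gauss sum $g=\sum_{x\in\mathbb{F}_q}\chi(x^2)$ with $|g|=q^{1/2}$; completing the square gives, for $a\neq 0$ and any $b\in\mathbb{F}_q$,
$$
\sum_{x\in\mathbb{F}_q}\chi(ax^2+bx)=\eta(a)\,g\,\chi\!\left(-\frac{b^2}{4a}\right),
$$
a number of modulus exactly $q^{1/2}$, whereas $\sum_{x\in\mathbb{F}_q}\chi(bx)$ equals $0$ for $b\neq 0$ and $q$ for $b=0$.

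I would handle the paraboloid first, as it is essentially immediate. Parametrizing $\mathcal{P}$ by $(x_1,\dots,x_{d-1})\in\mathbb{F}_q^{d-1}$, the Fourier transform factors across coordinates:
$$
\hat{\mathcal{P}}(m)=q^{-d}\prod_{j=1}^{d-1}\sum_{x_j\in\mathbb{F}_q}\chi\!\left(-m_dx_j^2-m_jx_j\right).
$$
If $m_d=0$ then, since $m\neq 0$, some $m_j\neq 0$ with $j\le d-1$, and that factor vanishes, giving $\hat{\mathcal{P}}(m)=0$. If $m_d\neq 0$, each of the $d-1$ factors is a completed Gauss sum of modulus $q^{1/2}$, so $|\hat{\mathcal{P}}(m)|=q^{-d}q^{(d-1)/2}=q^{-d}|\mathcal{P}|^{1/2}$, using $|\mathcal{P}|=q^{d-1}$; this is the Salem bound, with equality.

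For the sphere $\mathcal{S}_t$ — where one must take $t\neq 0$, since $\mathcal{S}_0$ genuinely fails the bound at isotropic frequencies — I would expand the indicator by orthogonality, $\mathbf{1}_{\|x\|=t}=q^{-1}\sum_{s\in\mathbb{F}_q}\chi(s(\|x\|-t))$, substitute into the definition of $\hat{\mathcal{S}_t}$, and peel off the $s=0$ term, which is $q^{-1}\sum_x\chi(-m\cdot x)=0$ for $m\neq 0$. For each $s\neq 0$ the $x$–sum again factors over coordinates into completed Gauss sums, and collecting the $\chi(-m_j^2/(4s))$ factors into $\chi(-\|m\|/(4s))$ yields
$$
\hat{\mathcal{S}_t}(m)=q^{-d-1}g^{d}\sum_{s\in\mathbb{F}_q^{\times}}\eta(s)^{d}\,\chi\!\left(-st-\frac{\|m\|}{4s}\right).
$$
It remains to bound the inner sum by $O(q^{1/2})$. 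When $\|m\|\neq 0$ it is an ordinary Kloosterman sum for $d$ even and a Salié sum for $d$ odd, and Weil's bound for such additive/multiplicative character sums of a rational function gives modulus at most $2q^{1/2}$. When $\|m\|=0$ — a nonempty locus over finite fields once $d\ge 3$, and for $d=2$ when $-1$ is a square — the sum degenerates: to $\sum_{s\neq 0}\chi(-st)=-1$ for $d$ even, and to $\sum_{s\neq 0}\eta(s)\chi(-st)=\eta(-t)\,g$, of modulus $q^{1/2}$, for $d$ odd. In every case $|\hat{\mathcal{S}_t}(m)|\le q^{-d-1}q^{d/2}\cdot 2q^{1/2}=2q^{-(d+1)/2}$, which matches $q^{-d}|\mathcal{S}_t|^{1/2}=q^{-(d+1)/2}(1+o(1))$ up to the absolute constant coming from Weil's theorem.

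The only genuinely delicate step is this last exponential–sum estimate for the sphere: away from the isotropic cone one cannot extract square-root cancellation by elementary means and must invoke Weil (Kloosterman/Salié), while on the cone $\|m\|=0$ the Kloosterman structure collapses and must be treated by hand — which is, not coincidentally, exactly the place where the $t=0$ sphere would fail to be Salem. Everything else is routine Gauss-sum bookkeeping.
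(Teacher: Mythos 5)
Your proof is correct and is essentially the standard Iosevich--Rudnev argument; the paper itself only cites \cite{IR07} for this theorem, but your delta-expansion into Gauss sums plus the Weil bound for Kloosterman/Sali\'e sums is exactly the technique the paper reproduces for its own analogous computations (Lemma \ref{salem_quadratic} and Lemma \ref{odd_lemma}). Your two caveats are well taken and consistent with the paper's usage: the sphere requires $t\neq 0$, and the resulting bound $2q^{-(d+1)/2}$ satisfies the Salem condition only up to the absolute constant from Weil's theorem, which is also how the paper treats the constant in Lemma \ref{salem_quadratic}.
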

\v

\n We will provide a few more examples and discuss how they relate to Theorem \ref{shatter3}.
\v

\begin{lemma}\label{linear}
Suppose that $S\subseteq \mathbb{F}_q^d$ is obtained from $S_0\subseteq \mathbb{F}_q^d$ via an invertible linear transformation, i.e.
$$
S(x)=S_0(Tx).
$$
Then $S$ is a $\gamma$-Salem set if and only if $S_0$ is.
\v
    
\end{lemma}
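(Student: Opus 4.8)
The plan is to compute the Fourier transform of $S$ directly in terms of that of $S_0$ and see that an invertible linear change of variables only permutes the frequencies (up to composition with the transpose inverse) and rescales by a fixed factor that is absorbed into the constants. Concretely, write $S(x)$ and $S_0(x)$ for the indicator functions. Using the substitution $u=Tx$, which is a bijection of $\mathbb{F}_q^d$ since $T$ is invertible, I would compute
$$
\hat{S}(m)=q^{-d}\sum_{x\in\mathbb{F}_q^d}\chi(-m\cdot x)S_0(Tx)=q^{-d}\sum_{u\in\mathbb{F}_q^d}\chi(-m\cdot T^{-1}u)S_0(u)=q^{-d}\sum_{u}\chi(-(T^{-\top}m)\cdot u)S_0(u)=\hat{S_0}(T^{-\top}m),
$$
where $T^{-\top}$ denotes the inverse transpose and $T^{-\top}m\cdot u = m\cdot T^{-1}u$ is the defining adjoint relation for the bilinear pairing. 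Note also that $|S|=|S_0|$, again because $T$ is a bijection.

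With this identity in hand the lemma is essentially immediate. Since $m\mapsto T^{-\top}m$ is a bijection of $\mathbb{F}_q^d$ fixing $0$, the set of values $\{|\hat{S}(m)|:m\neq 0\}$ equals the set $\{|\hat{S_0}(m)|:m\neq 0\}$. Hence
$$
\max_{m\neq 0}|\hat{S}(m)|=\max_{m\neq 0}|\hat{S_0}(m)|,
$$
and since $|S|^{1/2}=|S_0|^{1/2}$ and $q$ is the same, the bound $|\hat{S}(m)|\le q^{-d}(\log q)^\gamma|S|^{1/2}$ holds for all nonzero $m$ if and only if the corresponding bound holds for $S_0$. This proves both directions of the equivalence with the same $\gamma$.

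Honestly there is no real obstacle here; the only point requiring a small amount of care is getting the adjoint bookkeeping right, i.e.\ verifying that $\chi(-m\cdot T^{-1}u)=\chi(-(T^{-\top}m)\cdot u)$, which follows from the definition of the transpose with respect to the standard bilinear form $m\cdot x=\sum_i m_ix_i$ over $\mathbb{F}_q^d$ (no positive-definiteness or field-characteristic issues arise, since this is a purely formal identity of bilinear forms). One should perhaps remark that the statement $S(x)=S_0(Tx)$ in the lemma means $S=T^{-1}S_0$ as sets, so that ``$S$ is obtained from $S_0$ via an invertible linear transformation'' is symmetric in $S$ and $S_0$, consistent with the ``if and only if.''
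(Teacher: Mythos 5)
Your proof is correct and follows essentially the same route as the paper: the change of variables $u=Tx$ together with the adjoint identity gives $\hat{S}(m)=\hat{S_0}(T^{-\top}m)$, and since $T^{-\top}$ permutes the nonzero frequencies and $|S|=|S_0|$, the $\gamma$-Salem bound transfers in both directions. The paper's proof is the same computation, stated for one direction and invoking symmetry for the other.
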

\begin{proof}
We will assume that $S_0$ is a Salem set and show that $S$ must be as well, noting that the other direction follows by symmetry.  We can compute directly, where $T^{-t}$ is the inverse transpose of $T$ and $m\neq 0$,
$$
\hat{S}(m)=q^{-d}\sum_{x\in \mathbb{F}_q^d}\chi(-m\cdot x)S_0(Tx)
=q^{-d}\sum_x\chi(-m\cdot (T^{-1}x))S_0(x)
$$
$$
=q^{-d}\sum_x \chi(-x\cdot (T^{-t}m))S_0(x)
=\hat{S_0}(T^{-t}m)
$$
    
\end{proof}
\v

\begin{lemma}
Suppose that $S\subseteq \mathbb{F}_q^d$ is obtained from $S_0\subseteq \mathbb{F}_q^d$ via a translation, i.e. $S(x)=S_0(x+v)$ for some vector $v\in \mathbb{F}_q^d$.  Then $S$ is a $\gamma$-Salem set if and only if $S_0$ is.
    
\end{lemma}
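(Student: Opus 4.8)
The plan is to mimic the proof of the preceding lemma on invertible linear transformations: compute $\hat{S}$ directly in terms of $\hat{S_0}$ and observe that a translation only introduces a unimodular factor, which is invisible to the modulus. As there, it suffices by symmetry to show that if $S_0$ is a $\gamma$-Salem set then so is $S$ (the converse is identical, translating by $-v$ instead of $v$).

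First I would fix a nonzero $m \in \mathbb{F}_q^d$ and expand the definition of the Fourier transform,
$$
\hat{S}(m) = q^{-d}\sum_{x\in \mathbb{F}_q^d}\chi(-m\cdot x)\,S_0(x+v).
$$
Then I would substitute $y = x+v$, which is a bijection of $\mathbb{F}_q^d$, so that $x = y-v$ and $-m\cdot x = -m\cdot y + m\cdot v$. Using multiplicativity of $\chi$ this gives
$$
\hat{S}(m) = \chi(m\cdot v)\,q^{-d}\sum_{y\in \mathbb{F}_q^d}\chi(-m\cdot y)\,S_0(y) = \chi(m\cdot v)\,\hat{S_0}(m).
$$

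Since $\chi$ takes values on the unit circle, $|\chi(m\cdot v)| = 1$, and hence $|\hat{S}(m)| = |\hat{S_0}(m)|$ for every $m\neq 0$. Moreover, translation by $-v$ maps $S$ bijectively onto $S_0$, so $|S| = |S_0|$. Consequently the inequality $|\hat{S}(m)| \le q^{-d}(\log q)^{\gamma}|S|^{1/2}$ holds for all $m\neq 0$ precisely when the corresponding inequality holds for $S_0$; that is, $S$ is a $\gamma$-Salem set if and only if $S_0$ is.

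There is essentially no obstacle here. The only point requiring a moment's care is bookkeeping the sign in the change of variables (whether the extra phase comes out as $\chi(m\cdot v)$ or its conjugate), and this is immaterial since only absolute values enter the definition. The parameter $\gamma$ and the factor $(\log q)^{\gamma}$ play no role in the argument and are simply carried along unchanged.
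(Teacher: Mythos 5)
Your proposal is correct and follows essentially the same route as the paper: a change of variables $y=x+v$ in the Fourier sum yields $\hat{S}(m)=\chi(m\cdot v)\hat{S_0}(m)$, so the moduli agree and the Salem condition transfers. Your additional remark that $|S|=|S_0|$ is a sensible bit of bookkeeping that the paper leaves implicit.
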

\v

\begin{proof}
This is another straightforward calculation:
$$
\hat{S}(m)=q^{-d}\sum_x \chi(-m\cdot x)S_0(x+v)
=q^{-d}\sum_x \chi(-m\cdot (x-v))S_0(x)
$$
$$
=\chi(m\cdot v)\hat{S_0}(m),
$$
and so $|\hat{S}(m)|=|\hat{S_0}(m)|$.
    
\end{proof}
\v

\begin{lemma}\label{quadratic_form}
Any degree two polynomial $f\in \mathbb{F}_q[x,y]$, $f\notin\mathbb{F}_q[x]\cup\mathbb{F}_q[y]$, $q$ odd, can be reduced to one of the following forms via an invertible linear change of variables and a translation:
\v

\begin{enumerate}
\item $f(x,y)=y-x^2$

\item $f(x,y)=ax^2+by^2+c$
    
\end{enumerate}
\v

\n Moreover, it takes the first form if the quadratic part is a non-degenerate quadratic form, and takes the second form if it is degenerate.  If $f$ takes the second form and is a smooth curve, then $c\neq 0$.
    
\end{lemma}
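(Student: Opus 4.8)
The plan is to simplify $f$ one homogeneous degree at a time, using throughout that only the zero set $Z_f$ matters for the hypothesis classes, so that $f$ may be multiplied by an arbitrary nonzero scalar at will. I would write $f = Q + L + c$, with $Q(x,y)=Ax^2+Bxy+Cy^2$ the quadratic part (nonzero, since $\deg f = 2$), $L$ the linear part, and $c\in\mathbb{F}_q$. An invertible linear change of variables replaces $Q$ by a congruent form and hence preserves its rank, while a translation fixes $Q$ and alters only $L$ and $c$. Since $q$ is odd, $Q$ has a well-defined symmetric matrix and can be diagonalized by an invertible linear change of variables, so after this first reduction I may assume $f = ax^2 + by^2 + Dx + Ey + c$ with $(a,b)\ne(0,0)$. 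The quadratic part is non-degenerate precisely when $ab\ne 0$, and I would treat that case and the degenerate case $ab=0$ separately.

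In the non-degenerate case $a,b\ne 0$, completing the square in each variable (legal since $q$ is odd) and translating $x\mapsto x-\tfrac{D}{2a}$, $y\mapsto y-\tfrac{E}{2b}$ absorbs the linear part, leaving $f=ax^2+by^2+c'$, which is the second form. For the final assertion, a point of $\{ax^2+by^2+c'=0\}$ is singular exactly where the gradient $(2ax,2by)$ vanishes, namely at the origin (using again $q$ odd and $a,b\ne 0$), and the origin lies on the curve iff $c'=0$; hence a smooth curve forces $c'\ne 0$.

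In the degenerate case $ab=0$, after swapping $x$ and $y$ if necessary I may take $b=0$, $a\ne 0$, so $f=ax^2+Dx+Ey+c$; completing the square in $x$ and translating gives $f=ax^2+Ey+c'$. If $E\ne 0$, translating $y\mapsto y-c'/E$ gives $f=ax^2+Ey$, and then one rescaling of the variable $y$ together with one (harmless) rescaling of $f$ turns this into $y-x^2$, the first form. If $E=0$, then already $f=ax^2+c'=ax^2+0\cdot y^2+c'$ is of the second form with a vanishing coefficient, and its zero set is a union of lines parallel to an axis, never a smooth conic, so the last clause is vacuous in that sub-case.

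The one step that needs real care is distinguishing the two sub-cases of the degenerate case. The hypothesis $f\notin\mathbb{F}_q[x]\cup\mathbb{F}_q[y]$ rules out $f$ being a quadratic in a single original coordinate, but this condition is not preserved by linear changes of variables, so the honest bookkeeping is in terms of whether $f$ is a polynomial in a single linear form: when it is not, one reaches the first form, and when it is, one reaches the second form with a vanishing coefficient. The remaining ingredients — diagonalizing binary quadratic forms over $\mathbb{F}_q$ with $q$ odd, completing the square, and tracking the immaterial scalar incurred in normalizing to $y-x^2$ — are routine, so the real content is just organizing these cases.
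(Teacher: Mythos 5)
Your proof is correct and follows essentially the same route as the paper: diagonalize the quadratic part (possible since $q$ is odd), eliminate the linear terms by translation when the form is non-degenerate, and in the degenerate case complete the square in the remaining variable to reach $y-x^2$ (up to a harmless scalar), with your gradient computation supplying the final clause that a smooth curve forces $c\neq 0$. You are in fact more careful than the paper in two spots: you justify diagonalization over $\mathbb{F}_q$ by congruence rather than the paper's appeal to orthogonal diagonalization of a ``real symmetric matrix,'' and you correctly note that the hypothesis $f\notin\mathbb{F}_q[x]\cup\mathbb{F}_q[y]$ is not invariant under linear substitutions, so the degenerate case has a genuine sub-case (e.g.\ $f=(x+y)^2+1$, a polynomial in a single linear form) that lands in the second form with a vanishing coefficient rather than in $y-x^2$ --- a point the paper's proof glosses over by asserting $E\neq 0$, though it is harmless in the intended application since the smoothness determinant in Theorem \ref{VC_quadratic} excludes that sub-case.
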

\v

\begin{proof}
Let
$$
f(x,y)=Ax^2+Bxy+Cy^2+Dx+Ey+F.
$$
Consider the quadratic form $Q(x,y)=Ax^2+Bxy+Cy^2$, which corresponds to the symmetric matrix
$$
M=\left(\begin{array}{cc}
A & \frac{B}{2} \\
\frac{B}{2} & C
\end{array}
\right)
$$
in the sense that for any column vector $v=\left(\begin{array}{c} x \\ y \end{array}\right)$,
$$
Q(x,y)=Q(v)=v^tMv.
$$
  This is where we have used the assumption $q$ is odd, so that $\frac{1}{2}\in \mathbb{F}_q$.  $M$ is a real symmetric matrix and hence is diagonalizable.  Therefore there is an orthogonal transformation $U$ such that $U^t M U$ is a diagonal matrix.  This means that we may assume without loss of generality that $B=0$, since otherwise it can be eliminated via the change of coordinates
$$
v'=\left(\begin{array}{c}
x' \\
y'
\end{array}\right)
=U^t\left(\begin{array}{c}
x \\
y
\end{array}\right)=U^t v.
$$
Let $Q'$ be the quadratic form corresponding to the matrix $U^t M U$.  Under this transformation,
$$
Q'(v')=(v')^t(U^tMU)v'
=(Uv')^tM(Uv')
=v^t M v
=Q(v).
$$
If $M$ is invertible, let
$$
w=-\frac{1}{2}M^{-1}\left(\begin{array}{cc}
D \\ E
\end{array}\right),
$$
so that
$$
f(x,y)=f(v)=v^tMv+Dx+Ey+F
$$
$$
=(v-w)^tM(v-w)+w^tMv+v^tMw-w^tMw+\left(\begin{array}{c} D \\ E \end{array}\right)^t (v-w)+\left(\begin{array}{c} D \\ E \end{array}\right)^t w+F
$$
$$
=Q(v-w)+\left(2Mw+\left(\begin{array}{c} D \\ E \end{array}\right)\right)^t\cdot (v-w)+w^tMw+ \left(\begin{array}{c} D \\ E \end{array}\right)^tw+F
$$
$$
=Q(v-w)+F',
$$
where
$$
F'=w^tMw+\left(\begin{array}{c} D \\ E \end{array}\right)^tw+F.
$$
\v

\n Since $F'$ is a constant vector, not depending on the variables $x$ and $y$, we see that translation by $w$ eliminates linear terms assuming $M$ is invertible.  This combined with the previous discussion of diagonalizing $M$ proves the Lemma in the case when $M$ is invertible.
\\

\n If $M$ is not invertible, Then there is some nonzero vector $u$ such that $Mu=0$.  Without loss of generality assume that $u=\left(\begin{array}{c} 0 \\ 1 \end{array}\right)$, since otherwise we may apply an orthogonal transformation mapping $\left(\begin{array}{c} 0 \\ 1 \end{array}\right)$ to $u$.  This change of basis does not affect whether the linear transformation corresponding to $M$ is invertible.  If $M\left(\begin{array}{c} 0 \\ 1 \end{array}\right)=0$, then $B=C=0$, and so 
$$
f(x,y)=Ax^2+Dx+Ey+F.
$$
Since we assumed $f\notin\mathbb{F}_q[x]$, we know that $E\neq 0$, so we may express this in the standard quadratic form
$$
y=ax^2+bx+c.
$$
Then, via translation, we get $y=x^2$.
\end{proof}
\v

\n We still need to show that $Z_f$ satisfies the hypotheses of Theorem \ref{shatter3}.  We will need the well-known Weil bound for Kloosterman sums.  We will also later use another form of the Weil bounds  \cite{W48}, so we will introduce both here.  See, for example, Iwaniec and Kowalski \cite{IK} for an overview of the various forms of Weil bounds for character sums over finite fields.
\v

\begin{theorem}[Weil, 1948]\label{weil}
For $a,b\in \mathbb{F}_q^*$,
$$
\left|\sum_{j\in \mathbb{F}_q}\chi(aj+bj^{-1})\right|\leq 2\sqrt{q}.
$$
Moreover, if $f\in \mathbb{F}_q[x]$ is a polynomial of degree $n$, $f$ not of the form $c+h^p-h$ for any polynomial $h\in \mathbb{F}_q[x]$, then
$$
\left|\sum_{j\in \mathbb{F}_q}\chi(f(j))\right|\leq (n-1)\sqrt{q}.
$$
\end{theorem}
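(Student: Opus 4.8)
\emph{Overall strategy.} Both inequalities are instances of Weil's Riemann Hypothesis for curves over $\mathbb{F}_q$ — which is precisely the content of the 1948 work being cited — and the plan is to obtain each one by realizing the exponential sum as a piece of the number of rational points on an Artin--Schreier cover of $\mathbb{P}^1$. (One could instead just quote \cite{IK}; what follows is the route I would take to make the statement self-contained, and one may alternatively run Stepanov's elementary auxiliary-polynomial method in place of the curve count.) Throughout write $q=p^r$, fix $\psi_0(s)=e^{2\pi i s/p}$, and note every nontrivial additive character has the form $\chi(t)=\psi_0(\mathrm{Tr}_{\mathbb{F}_q/\mathbb{F}_p}(ct))$ for a unique $c\in\mathbb{F}_q^{*}$; absorbing $c$ into the argument reduces us to the standard character $\chi=\psi_0\circ\mathrm{Tr}_{\mathbb{F}_q/\mathbb{F}_p}$ (the non-degeneracy hypothesis then being imposed on $cf$ in place of $f$; the two coincide when $q$ is prime). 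The point of this normalization is that $\chi(u^p-u)=1$ for every $u$, since $\mathrm{Tr}(u^p)=\mathrm{Tr}(u)$.

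\emph{The polynomial bound.} First I would reduce to the case $p\nmid n$: if $p\mid n$, then because Frobenius is surjective on $\mathbb{F}_q$ there is $h\in\mathbb{F}_q[x]$ with $\deg h=n/p$ whose $p$-th power matches the leading term of $f$, so $f-(h^p-h)$ has smaller degree while $\sum_j\chi(f(j))=\sum_j\chi\big((f-(h^p-h))(j)\big)$ by the last remark; iterating, one arrives at a polynomial of degree $n'\le n$ with $p\nmid n'$, still non-constant because $f$ is not of the excluded form. Now form $C^{\circ}\colon\ y^p-y=f(x)$ and let $C$ be its smooth projective model over $\mathbb{F}_q$. Since $\deg f=n'\ge1$ is prime to $p$, $f$ is not of the shape $(\text{const})+(h^p-h)$ even over $\overline{\mathbb{F}}_q$, so $C$ is geometrically irreducible; the projection $x\colon C\to\mathbb{P}^1$ is a $\mathbb{Z}/p$-Galois cover, étale over $\mathbb{A}^1$ and totally (wildly) ramified over $\infty$ with a single, hence $\mathbb{F}_q$-rational, point $\infty_C$ and break $n'$, so its different exponent there is $(p-1)(n'+1)$. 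Riemann--Hurwitz gives $2g_C-2=-2p+(p-1)(n'+1)$, i.e. $g_C=\tfrac{(p-1)(n'-1)}{2}$.

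\emph{Counting, isolating the sum, and the Kloosterman case.} Using $\#\{y\in\mathbb{F}_{q^k}:y^p-y=u\}=\sum_{a\in\mathbb{F}_p}\psi_0\big(\mathrm{Tr}_{\mathbb{F}_{q^k}/\mathbb{F}_p}(au)\big)$ together with the rational point $\infty_C$, one finds
$$
\#C(\mathbb{F}_{q^k})=1+q^k+\sum_{a\in\mathbb{F}_p^{*}}S_a^{(k)},\qquad S_a^{(k)}:=\sum_{x\in\mathbb{F}_{q^k}}\psi_0\big(\mathrm{Tr}_{\mathbb{F}_{q^k}/\mathbb{F}_p}(af(x))\big),
$$
while Weil's theorem gives $\#C(\mathbb{F}_{q^k})=q^k+1-\sum_{j=1}^{2g_C}\omega_j^{\,k}$ with $|\omega_j|=\sqrt q$. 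To extract the single sum $\sum_j\chi(f(j))=S_1^{(1)}$ rather than the full sum over $a$, I would use that the $\mathbb{Z}/p$-action commutes with Frobenius and hence splits the zeta numerator of $C$ as $\prod_{a\in\mathbb{F}_p^{*}}P_a(T)$; each $P_a$ has degree $2g_C/(p-1)=n'-1$, its reciprocal roots lie among the $\omega_j$ and so have modulus $\sqrt q$, and $-S_a^{(k)}$ is their $k$-th power sum. Therefore $|S_1^{(1)}|\le(n'-1)\sqrt q\le(n-1)\sqrt q$. (Equivalently, $S_a^{(k)}=-\mathrm{Tr}\big(\mathrm{Frob}_{q^k}\mid H^1_c(\mathbb{A}^1_{\overline{\mathbb{F}}_q},\mathcal{L}_a)\big)$, where $\mathcal{L}_a$ is the Artin--Schreier sheaf with trace function $x\mapsto\psi_0(\mathrm{Tr}(af(x)))$, of dimension $n'-1$ with all Frobenius eigenvalues of modulus $\sqrt q$.) The Kloosterman bound is the same construction with $\mathbb{A}^1$ replaced by $\mathbb{G}_m$: for $a,b\in\mathbb{F}_q^{*}$ the cover $y^p-y=ax+bx^{-1}$ is étale over $\mathbb{G}_m$ and wildly ramified with break $1$ over each of $0$ and $\infty$ (different exponent $2(p-1)$ at each), geometrically irreducible since $ax+bx^{-1}$ has genuine simple poles at $0$ and $\infty$; Riemann--Hurwitz yields $g_C=p-1$, each factor of the zeta numerator has degree $2g_C/(p-1)=2$, and $\sum_{x\in\mathbb{F}_q^{*}}\chi(ax+bx^{-1})$ is, up to sign, the sum of the two reciprocal roots of one such factor, each of modulus $\sqrt q$, giving the bound $2\sqrt q$.

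\emph{Main obstacle.} The genuinely deep ingredient is Weil's Riemann Hypothesis for curves; granting it, the remaining work is characteristic-$p$ bookkeeping — the degree reduction and the check that it leaves a non-constant polynomial, the wild-ramification/different-exponent inputs to Riemann--Hurwitz, and the separation of the individual sum from the full point count via the $\mathbb{Z}/p$-isotypic decomposition of the zeta function. I expect the wild ramification analysis (pinning down the break and the different exponent at $\infty$, and handling the normalization of $\chi$ cleanly when $q$ is not prime) to be the step most prone to slips.
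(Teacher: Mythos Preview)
The paper does not prove this theorem at all: it is stated as a classical result of Weil \cite{W48}, with a pointer to Iwaniec--Kowalski \cite{IK} for background, and is then used as a black box in the proofs of Lemma~\ref{salem_quadratic} and Lemma~\ref{odd_lemma}. So there is no ``paper's own proof'' to compare against.

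Your sketch is the standard, correct route to these bounds via Artin--Schreier covers of $\mathbb{P}^1$ and Weil's Riemann Hypothesis for curves. The genus computations ($g_C=(p-1)(n'-1)/2$ for the polynomial case, $g_C=p-1$ for the Kloosterman cover), the isotypic decomposition of the zeta numerator under the $\mathbb{Z}/p\mathbb{Z}$-action to isolate the individual character sum, and the degree-reduction modulo $h^p-h$ are all handled correctly. Your caveat about the character normalization when $q$ is a proper prime power is well-placed: when $\chi$ is not the canonical trace character, the condition ``$f$ not of the form $c+h^p-h$'' has to be interpreted for $cf$ rather than $f$, and one should check that the paper's applications (where $\deg f=n$ with $p\nmid n$) are unaffected --- which they are, since the leading term already obstructs the Artin--Schreier decomposition. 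In short, your proposal supplies what the paper deliberately omits, and does so by the expected method.
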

\v

\begin{lemma}\label{salem_quadratic}
Let $f$ be a degree 2 polynomial $f\in \mathbb{F}_q[x,y]$, $f\notin \mathbb{F}_q[x]\cup \mathbb{F}_q[y]$, $q$ odd.  As long as the reduction from Lemma \ref{quadratic_form} does not yield the equation $ax^2+by^2=0$, it follows that $Z_f$ is a Salem set with $|Z_f|$ equal to $q-1$, $q$, or $q+1$. 
    
\end{lemma}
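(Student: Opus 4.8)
The plan is to reduce $f$ to the normal forms of Lemma \ref{quadratic_form} and then check, for each form separately, that the zero set is Salem and has $q-1$, $q$, or $q+1$ elements. First I would observe that both properties are preserved by the operations appearing in that reduction: an invertible linear change of variables composed with a translation is a bijection of $\mathbb{F}_q^2$ carrying $Z_f$ onto the zero set of the reduced polynomial, so the two zero sets have the same cardinality, and by Lemma \ref{linear} together with the translation lemma one of them is a $\gamma$-Salem set precisely when the other is. Hence it suffices to treat the normal forms $g(x,y)=y-x^2$ and $g(x,y)=ax^2+by^2+c$; in the second case the diagonalization step in the proof of Lemma \ref{quadratic_form} forces $a,b\neq 0$, and the standing assumption that the reduction does not produce $ax^2+by^2=0$ is exactly the requirement $c\neq 0$, so I may assume $a$, $b$, $c$ are all nonzero.

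If $g=y-x^2$ then $Z_g=\{(x,x^2):x\in\mathbb{F}_q\}$ is the paraboloid of Theorem \ref{IRthm} in dimension $2$, hence a Salem set, and $|Z_g|=q$. If $g=ax^2+by^2+c$ with $a,b,c\neq 0$, I would compute $|Z_g|$ by the standard character-sum identity: writing the indicator of $Z_g$ as $q^{-1}\sum_{s}\chi\bigl(s(ax^2+by^2+c)\bigr)$, summing over $(x,y)\in\mathbb{F}_q^2$, separating the $x$- and $y$-sums, and evaluating the resulting quadratic Gauss sums (this is where $q$ odd is used), the $s=0$ term contributes $q$ and the $s\neq 0$ terms sum to $-\eta(-ab)$, where $\eta$ denotes the quadratic character; thus $|Z_g|=q-\eta(-ab)\in\{q-1,q+1\}$. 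Combined with the parabola, this gives $|Z_f|\in\{q-1,q,q+1\}$ in every case covered by the lemma.

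For the Salem bound in the conic case I would apply the same expansion to the Fourier transform: for $m\neq 0$,
$$
\hat{Z_g}(m)=q^{-3}\sum_{s}\chi(sc)\Bigl(\sum_x\chi(sax^2-m_1x)\Bigr)\Bigl(\sum_y\chi(sby^2-m_2y)\Bigr).
$$
The $s=0$ term equals $q^{-1}\delta_0(m_1)\delta_0(m_2)=0$ since $m\neq 0$; for each $s\neq 0$ I would complete the square in the two inner sums, each of which becomes a quadratic Gauss sum of modulus $q^{1/2}$, and then use $\eta(s)^2=1$ and the identity $G^2=\eta(-1)q$ for the quadratic Gauss sum $G$ to obtain
$$
\hat{Z_g}(m)=q^{-2}\eta(-ab)\sum_{s\neq 0}\chi\bigl(cs-\lambda s^{-1}\bigr),\qquad \lambda=\tfrac14\bigl(a^{-1}m_1^2+b^{-1}m_2^2\bigr).
$$
If $\lambda\neq 0$ the inner sum is a Kloosterman sum, hence at most $2q^{1/2}$ in modulus by the Weil bound (Theorem \ref{weil}); if $\lambda=0$ it collapses to $\sum_{s\neq 0}\chi(cs)=-1$. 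In either case $|\hat{Z_g}(m)|\leq 2q^{-2}q^{1/2}$, which, since $|Z_g|$ has size comparable to $q$, is the same Salem-type estimate $|\hat{Z_g}(m)|\lesssim q^{-d}|Z_g|^{1/2}$ already enjoyed by the sphere in Theorem \ref{IRthm}. Pulling back through the reduction, $Z_f$ is a Salem set.

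I expect the only step requiring real care to be the Gauss-sum bookkeeping in the conic case: completing the square correctly, tracking the character factors $\eta(sa)$ and $\eta(sb)$ and the constant $G^2=\eta(-1)q$, checking that the $s=0$ contribution vanishes for $m\neq 0$, and isolating the degenerate directions $m$ for which $\lambda=0$, where the Weil bound for Kloosterman sums does not apply and is replaced by the elementary identity $\sum_{s\neq 0}\chi(cs)=-1$. Everything else is a direct appeal to Lemma \ref{quadratic_form}, Lemma \ref{linear}, Theorem \ref{IRthm}, and Theorem \ref{weil}.
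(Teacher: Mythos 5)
Your proposal is correct and follows essentially the same route as the paper: reduce via Lemma \ref{quadratic_form} (with Lemma \ref{linear} and translation invariance), quote Theorem \ref{IRthm} for the parabola, and for the conic expand the indicator in additive characters, complete the square to produce quadratic Gauss sums, bound the resulting Kloosterman sum by Theorem \ref{weil}, and treat the degenerate direction $\lambda=0$ by the exact evaluation $\sum_{s\neq 0}\chi(cs)=-1$, exactly as in the paper. The only divergence is the point count, where the paper parametrizes the projective conic $aX^2+bY^2=Z^2$ by lines through a point ($q+1$ projective points minus at most two at infinity) while you evaluate the same character sum at $m=0$ to get the sharper exact count $|Z_g|=q-\eta(-ab)$; both give $|Z_f|\in\{q-1,q,q+1\}$.
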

\v

\begin{proof}
A parabola $y=x^2$ clearly has exactly $q$ solutions.  We know from \cite{IR07} that the parabola is a Salem set.  It remains to consider the case
$$
Z_f=S_{a,b}:=\{(x,y): ax^2+by^2=1\}.
$$
Viewing this as the projective curve $aX^2+bY^2=Z^2$, there are exactly $q+1$ points.  This is because for a fixed point $P$ on the curve, every line through $P$ intersects the projective curve in exactly one other point.  This yields a bijection between the curve and a projective line, which has $q+1$ points.  To find the number of points on the curve in affine space, observe that in projective space the line at infinity intersects the curve in at most 2 points, and so the number of points on the curve in affine space is one of $q-1$, $q$, or $q+1$.
\\

\n We still need a pointwise bound on the Fourier transform $|\hat{S}_{a,b}(m)|$ for $m\neq 0$.

$$
\hat{S}_{a,b}(m)=q^{-2}\sum_{x\in \mathbb{F}_q^2}\chi(-m\cdot x)S_{a,b}(x)
=q^{-3}\sum_{x\in \mathbb{F}_q^2}\sum_{j\in \mathbb{F}_q}\chi(-m\cdot x)\chi(j(ax_1^2+bx_2^2-1)
$$
$$
=q^{-3}\sum_{j\in \mathbb{F}_q^*}\chi(-j)\sum_{x\in \mathbb{F}_q^2}\chi(jax_1^2+jbx_2
^2-m\cdot x)
$$
This last step is where we have used the fact $m\neq 0$, for this ensures that the $j=0$ term vanishes.  For this inner sum, we have
$$
\sum_x \chi(jax_1^2+jbx_2^2-m\cdot x)
=\left(\sum_{x_1}\chi(jax_1^2-m_1x_1)\right)\left(\sum_{x_2}\chi(jbx_2^2-m_2x_2)\right),
$$
where
$$
\sum_{x_1\in \mathbb{F}_q}\chi(jax_1^2-m_1x_1)
=\chi\left(-\frac{m_1^2}{4ja}\right)\sum_{x_1}\chi\left(ja\left(x_1-\frac{m_1}{2ja}\right)^2\right),
$$
and
$$
\sum_{x_1}\chi\left(ja\left(x_1-\frac{m_1}{2ja}\right)^2\right)=\sum_{x_1}\chi(jax_1^2)
$$
\v

\n is a Gauss sum, for which it is well known for $k\neq 0$,
\v

$$
g(k):=\sum_{x\in \mathbb{F}_q}\chi(kx^2)=\e_q\eta(k)\sqrt{q},
$$
\v

\n where $\e_q\in \{\pm 1, \pm i\}$ depends on the field but not on $k$, and
\v

$$
\eta(k)=\left\{\begin{array}{ll}
1 & : k\neq 0 \ \text{is a square in} \ \mathbb{F}_q \\
-1 & :k\neq 0 \ \text{is not a square in} \ \mathbb{F}_q \\
0 & : k=0
\end{array}\right.
$$
\v

\n Therefore, putting this all together we obtain
\v

$$
\hat{S}_{a,b}(m)
=q^{-3}\sum_{j\in \mathbb{F}_q^*}\chi(-j)\chi\left(\frac{1}{j}\left(-\frac{m_1^2}{4a}-\frac{m_2^2}{4b}\right)\right)g(ja)g(jb)
$$
$$
=q^{-2}\e_q^2\sum_{j\in \mathbb{F}_q^*}\chi(-j)\chi\left(\frac{1}{j}\left(-\frac{m_1^2}{4a}-\frac{m_2^2}{4b}\right)\right)\eta(j^2ab)
$$
$$
=q^{-2}\e_q^2\eta(ab)\sum_{j\in \mathbb{F}_q^*}\chi(-j)\chi\left(\frac{1}{j}\left(-\frac{m_1^2}{4a}-\frac{m_2^2}{4b}\right)\right).
$$
\v

\n Finally, by Theorem \ref{weil} we see that 
\v

$$
|\hat{S}_{a,b}(m)|\leq 2q^{-\frac{3}{2}}
$$
\v

\n as long as $\frac{m_1^2}{4a}+\frac{m_2^2}{4b}\neq 0$.  If $\frac{m_1^2}{4a}+\frac{m_2^2}{4b}=0$, $m\neq 0$, then we can compute exactly
\v

$$
|\hat{S}_{a,b}(m)|=q^{-2}<q^{-\frac{3}{2}}.
$$
\end{proof}
\v

\n We are almost ready to prove Theorem \ref{VC_quadratic}, we just need one more Lemma.  We will defer the proof of Lemma \ref{shatter2} until Section \ref{main_section}, after we develop some more Fourier analytic machinery.
\v

\begin{lemma}\label{shatter2}
Suppose $S\subseteq \mathbb{F}_q^d$ is a $\gamma$-Salem set, $|S|=Kq^{d-1}(1+O(q^{-\alpha}))$ for a constant $K$ and for $\alpha>\frac{d-1}{2}$, and $|S\cap (S-x)|\leq q^{\beta}$, $\beta<\frac{d-1}{2}$, for all nonzero $x\in \mathbb{F}_q^d$.  If $E\subseteq \mathbb{F}_q^d$, $|E|\geq cq^{\frac{d+1}{2}}$ for $c$ sufficiently large, then 
$$
\text{VCdim}(\mathcal{H}_S(E))\geq 2.
$$
    
\end{lemma}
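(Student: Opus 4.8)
The plan is to prove $\mathrm{VCdim}(\mathcal{H}_S(E))\ge 2$ by showing that the number of configurations witnessing that some two-point subset of $E$ is shattered is positive. For $i\in\{1,2\}$ and $I\subseteq\{1,2\}$ put $w_{i,I}=S$ if $i\in I$ and $w_{i,I}=1-S$ if $i\notin I$ (as $\{0,1\}$-valued functions), and let
$$
N:=\sum_{\substack{x^1,x^2\in E\\ x^1\ne x^2}}\ \sum_{\substack{y^I\in E\\ I\subseteq\{1,2\}}}\ \prod_{i\in\{1,2\}}\prod_{I\subseteq\{1,2\}} w_{i,I}(x^i-y^I).
$$
A $6$-tuple $(x^1,x^2,y^\emptyset,y^1,y^2,y^{12})$ contributes $1$ exactly when $x^i-y^I\in S\iff i\in I$ for all $i,I$, i.e.\ when $\{x^1,x^2\}$ is shattered with the $y^I$ as witnesses, so it suffices to show $N>0$. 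Grouping the inner sum by the four $y$-vertices gives $N=\sum_{x^1\ne x^2}A\,B\,C\,D$ with $A,B,C,D\ge 0$, and for $x^1\ne x^2$,
$$
D(x^1,x^2)=\sum_{y\in E}S(x^1-y)S(x^2-y)\le\bigl|(x^1-S)\cap(x^2-S)\bigr|=\bigl|S\cap(S-(x^2-x^1))\bigr|\le q^\beta .
$$

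First I would set up the Fourier expansion. Write $\sigma:=|S|/q^d$ and $S=\sigma+S_0$, where $\hat{S_0}(0)=0$ and $\hat{S_0}(m)=\hat S(m)$ for $m\ne 0$, so that $\max_{m\ne 0}|\hat{S_0}(m)|\le q^{-d}(\log q)^\gamma|S|^{1/2}$ by the $\gamma$-Salem hypothesis. Substituting $S=\sigma+S_0$ into each of the eight factors and multiplying out expresses $N$ as a sum of $2^8$ terms, indexed by the set of factors in which the $S_0$-part is selected. The term with no $S_0$-part anywhere equals $(|E|^2-|E|)|E|^4\sigma^4(1-\sigma)^4$, and since $\sigma=Kq^{-1}(1+O(q^{-\alpha}))\to 0$ by the size hypothesis this is a strictly positive main term $M=|E|^6\sigma^4(1+o(1))$.

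Next I would bound the $2^8-1$ error terms, each carrying at least one $S_0$-factor, by Fourier inversion and orthogonality of characters: sum out the $y$-vertices first and then $x^1,x^2$. A $y$-vertex with a single $S_0$-factor on $x^i-y^I$ produces a convolution $(E*S_0)(x^i)$ (where $(f*g)(x)=\sum_z f(z)g(x-z)$, for which $\widehat{f*g}=q^d\hat f\hat g$), and a $y$-vertex with two $S_0$-factors produces $\sum_{y\in E}S_0(x^1-y)S_0(x^2-y)$, for which one keeps the cancellations in $\sum_{y\in E}S_0(x^1-y)S_0(x^2-y)=D(x^1,x^2)-\sigma\lambda(x^1)-\sigma\lambda(x^2)+\sigma^2|E|$, with $\lambda(x):=\sum_{y\in E}S(x-y)$. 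After summing over $x^1,x^2$, each error term reduces to powers of $\sigma$, $1-\sigma$, $|E|$ times one of the basic sums $\bigl|\sum_{x,y\in E}S_0(x-y)\bigr|$, $\sum_{x\in E}\bigl((E*S_0)(x)\bigr)^2$, $\sum_{x,y\in E}S_0(x-y)^2$, or $\sum_{x^1,x^2\in E}\bigl(\sum_{y\in E}S_0(x^1-y)S_0(x^2-y)\bigr)^2$; the first three are controlled by Plancherel ($\sum_m|\hat E(m)|^2=q^{-d}|E|$, and $\sum_m|\hat E(m)|\le|E|^{1/2}$ by Cauchy-Schwarz), the bound on $\max_{m\ne0}|\hat{S_0}(m)|$, and the routine estimate $\sum_{x,y\in E}S(x-y)=|S||E|^2q^{-d}+O((\log q)^\gamma|S|^{1/2}|E|)$. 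The last sum — a ``closed loop'' that appears precisely because both $y^{12}$ and $y^\emptyset$ are adjacent to each of $x^1,x^2$ in the underlying bipartite structure $K_{2,4}$ — is handled by rewriting it as $\sum_{y,y'\in E}\bigl(\sum_{x\in E}S_0(x-y)S_0(x-y')\bigr)^2$, bounding the diagonal $y=y'$ in terms of $S$-degrees and the off-diagonal via $|(y-S)\cap(y'-S)|=|S\cap(S-(y'-y))|\le q^\beta$, and using $\sum_{x^1,x^2\in E}D(x^1,x^2)\le q^\beta|E|^2+\sum_{x\in E}\lambda(x)$. Plugging in $|S|=Kq^{d-1}(1+O(q^{-\alpha}))$ and $|E|\ge cq^{(d+1)/2}$ with $c$ large then makes every error term $o(M)$ (a stray $(\log q)^\gamma$ being absorbed once $c$ exceeds it), so $N=M(1+o(1))>0$.

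The hard part is the bookkeeping, and within it the closed-loop term. The $2^8$ terms must be organized by which $y$-vertices carry $S_0$-factors, how many, which of $x^1,x^2$ they touch, and — decisively — whether the $S_0$-factors link $x^1$ to $x^2$; only the linking configurations resist a naive $\max_{m\ne0}|\hat{S_0}(m)|$-bound, and for these the intersection hypothesis is essential. The delicate estimate is the one for $\sum_{x^1,x^2\in E}\bigl(\sum_{y\in E}S_0(x^1-y)S_0(x^2-y)\bigr)^2$: at the threshold $|E|\sim q^{(d+1)/2}$ one has no pointwise control of $\lambda$ or $D$, only the uniform bound $D\le q^\beta$ and $L^2$-averages, and the resulting contribution is $o(M)$ exactly when $q^{2\beta}=o(\sigma^2|E|^2)$, i.e.\ $\beta<\tfrac{d-1}{2}$, while $\alpha>\tfrac{d-1}{2}$ serves to keep the deviation of $|S|$ from $Kq^{d-1}$ negligible throughout. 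The same machinery, applied to $G_3$ — whose edge set, unlike that of $G_2$, already contains $6$-cycles — is what gives Theorem \ref{shatter3}.
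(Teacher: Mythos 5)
Your argument is correct in outline, but it takes a genuinely different route from the paper's proof. The paper argues constructively: Lemma \ref{functional} yields the second-moment estimate $\sum_{x\in E}(E\ast S(x))^2=K^2|E|^3q^{-2}(1+O(q^{-\alpha}))+O(q^{d-1}(\log q)^{2\gamma}|E|)$, which counts the paths $(x^1,y^{12},x^2)$ with $x^1-y^{12},x^2-y^{12}\in S$; applying the pruning Lemma \ref{pruning} with $M>q^{\beta}$ one may take $x^1,x^2\in E_M$, so each $x^i$ has more than $q^{\beta}$ $S$-neighbors in $E$, and since $|(x^1-S)\cap(x^2-S)|\le q^{\beta}$ one can greedily choose $y^1$ adjacent to $x^1$ but not to $x^2$ (similarly $y^2$), with $y^{\varnothing}$ found trivially by avoiding $O(q)$ points. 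You instead count all shattering $6$-tuples directly, putting inclusion--exclusion weights $1-S$ on the non-edges, expanding $S=\sigma+S_0$, isolating the main term $|E|^6\sigma^4(1+o(1))$, and beating down the $2^8-1$ error terms --- Salem plus Plancherel for the terms that do not link $x^1$ to $x^2$, and the intersection bound $q^{\beta}$ for the linking (closed-loop) ones. This is much heavier bookkeeping (the paper's route confines all Fourier analysis to two reusable lemmas and never needs the $1-S$ factors, handling non-edges by greedy avoidance), but it buys a quantitative count of shattering configurations rather than mere existence, and your identification of the closed-loop term $\sum_{x^1,x^2\in E}\bigl(\sum_{y\in E}S_0(x^1-y)S_0(x^2-y)\bigr)^2$ and of exactly where $\beta<\frac{d-1}{2}$ and $\alpha>\frac{d-1}{2}$ enter matches the true bottlenecks. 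Two small caveats: several of your error terms (e.g.\ a single $S_0$-pair at one $y$-vertex) come out of size roughly $M/c^2$ times powers of $\log q$, so the honest conclusion is ``total error $<M$ once $c$ is large,'' not that each term is $o(M)$; and for $\gamma>0$ a factor $(\log q)^{\gamma}$ cannot be absorbed into the constant $c$, so your argument really needs $|E|\gtrsim q^{\frac{d+1}{2}}(\log q)^{\gamma}$ --- though the paper's own proof, which invokes Lemmas \ref{functional} and \ref{pruning} under that stronger hypothesis, has the same discrepancy with the stated lemma.
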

\v

\n This corresponds to the configuration in Figure \ref{path4}.

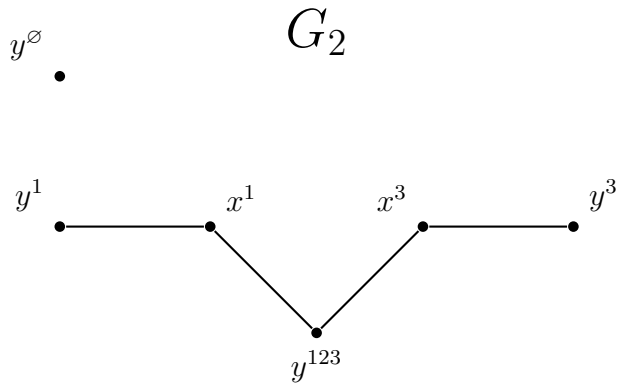
\begin{figure}[h!]
\centering
\begin{tikzpicture}[scale=2, thick]
  \node[fill=black, circle, minimum size=4pt, inner sep=0pt, label=below:$y^{123}$] (y123) at (0,0) {};
  
  \node[fill=black, circle, minimum size=4pt, inner sep=0pt, label=above right:$x^1$] (x1) at (-.707,.707) {};
  \node[fill=black, circle, minimum size=4pt, inner sep=0pt, label=above left:$x^3$] (x3) at (.707,.707) {};

  \node[fill=black, circle, minimum size=4pt, inner sep=0pt, label=above left:$y^1$] (y1) at (-1.707,.707) {};
  \node[fill=black, circle, minimum size=4pt, inner sep=0pt, label=above right:$y^3$] (y3) at (1.707,.707) {};
  
  \node[fill=black, circle, minimum size=4pt, inner sep=0pt, label=above left:$y^\varnothing$] (y0) at (-1.707,1.707) {};
  \node at (0,2) {\LARGE $G_2$};

  \draw (y123) -- (x1);
 
  \draw (y123) -- (x3);
  
    \draw (y1) -- (x1);
   
    \draw (y3) -- (x3);
\end{tikzpicture}

\caption{The graph $G_2$ represents shattering 2 points.}\label{path4}
\end{figure}
\v

\n We now have all the tools we need to prove Theorem \ref{VC_quadratic}.
\v

\begin{proof}[proof of Theorem \ref{VC_quadratic}]
This is a direct application of Theorem \ref{shatter3}, Lemma \ref{quadratic_form}, and Lemma \ref{salem_quadratic}.  Since we require
\v

$$
\det\left(\begin{array}{ccc}
A & B/2 & D/2 \\
B/2 & C & E/2 \\
D/2 & E/2 & F
\end{array}
\right)\neq 0,
$$
\v

\n it follows that the curve is smooth.  To see this, homogenize $f(x,y)$ to obtain
\v

$$
F(X,Y,Z)=Ax^2+Bx+Cy^2+Dxz+Eyz+Fz^2,
$$
\v

\n and note that the system of equations
\v

$$
0=\nabla F(X,Y,Z)=\langle 2Ax+By+Dz,  \ Bx+2Cy+Ez, \ Dx+Ey+2Fz\rangle
$$
\v

\n has a nontrivial solution if and only if the above matrix is invertible.  If 
\v

$$
\det\left(\begin{array}{cc}
A & B/2 \\
B/2 & C
\end{array}\right)\neq 0,
$$
\v

\n then by Lemma \ref{quadratic_form} we may assume without loss of generality that $f(x,y)=ax^2+by^2+c$.  Since the curve is smooth, we know $c\neq 0$.  We claim that for any vector $(h,k)\in \mathbb{F}_q^2$,
\v
$$
f(x,y)=f(x+h,y+k)=0
$$
\v
\n has at most 2 solutions.  To see this, note that
\v
$$
f(x+h,y+k)-f(x,y)=0
$$
\v
\n is a line, the empty set, or the whole space in the case when $(h,k)=(0,0)$.  Therefore, 
\v
$$
|Z_f\cap (Z_f-(h,k))|\leq 2
$$
\v
\n whenever $(h,k)\neq (0,0)$, since a line intersects a quadratic curve in at most 2 points.  Clearly it is possible to find a translation $S$ of $Z_f$ such that $S=-S$.  This is achieved by completing the square to eliminate linear terms, as in Lemma \ref{quadratic_form}.  Therefore, $S$ satisfies all conditions of Corollary \ref{VC3}, and so we can conclude that
\v
$$
\text{VCdim}(\mathcal{H}_{S}(E))=3.
$$

\n To see that

$$
\text{VCdim}\mathcal{H}_{Z_f}(E)\leq 3,
$$
\v

\n note that the argument in Corollary \ref{VC3} that it is impossible to shatter 4 points still holds here, even though $Z_f$ does not necessarily satisfy all hypotheses of Theorem \ref{shatter3}.
\v

\n We still need to consider the case when
\v

$$
\det\left(\begin{array}{cc}
A & B/2 \\
B/2 & C
\end{array}\right)= 0,
$$
\v

\n and so $Z_f$ is a parabola.  In this case, there is no hope to achieve $S=-S$ for any translation of $Z_f$.  Indeed, we can see immediately that it is impossible to shatter 3 points, because $|S\cap (S-x)|\leq 1$ for all $x\neq 0$. Therefore we may make an argument similar to Corollary \ref{VC3}.  If 
\v

$$
\{x^1,x^2,x^3\}\cup \{y^I: I\subseteq [3]\}
$$
\v

\n is a shattering configuration, we get a contradiction based on the fact that 
\v

$$
\{x^1,x^2\}\subseteq (y^{123}+S)\cap (y^{12}\cap S).
$$
\v

\n Finally, we know by Lemma \ref{shatter2} that 
\v

$$
\text{VCdim}(\mathcal{H}_S(E))=2.
$$
\v

\n In this case, we obtain the result for the symmetrized parabola $P=S\cup (-S)$ via Corollary \ref{finish} later in the section.

\end{proof}
\v

\begin{lemma}\label{odd_lemma}
Consider a degree $n\geq 2$ polynomial $f\in \mathbb{F}_q[x]$, and let $\Gamma_f:=\{(t,f(t)): t\in \mathbb{F}_q\}$ be the graph of the function.  $\Gamma_f$ is a Salem set with exactly $q$ elements, and $|\Gamma_f\cap (\Gamma_f-x)|\leq n-1$ for all $x\in \mathbb{F}_q^2$.
    
\end{lemma}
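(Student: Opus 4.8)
The plan is to mirror the proof of Lemma \ref{salem_quadratic}: establish the cardinality, bound the Fourier transform of $\Gamma_f$ using the Weil bound (Theorem \ref{weil}), and control the translate–intersections by counting roots of a one-variable polynomial. Throughout I would use that $p\nmid n$, the hypothesis under which this lemma is invoked in Theorem \ref{odd_function}; some such restriction is genuinely needed, since for $f(x)=x^p$ one finds $\hat{\Gamma}_f(m)$ of size $q^{-1}$ for suitable $m$, and every nonzero translate of $\Gamma_f$ either misses $\Gamma_f$ or meets it in $q$ points.

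The cardinality is free: $t\mapsto (t,f(t))$ is injective, so $|\Gamma_f|=q$. For the Salem bound, for $m=(m_1,m_2)\neq 0$ I would write
$$
\hat{\Gamma}_f(m)=q^{-2}\sum_{t\in\mathbb{F}_q}\chi(-m_1 t-m_2 f(t)).
$$
If $m_2=0$ then $m_1\neq 0$ and the sum vanishes by orthogonality of characters. If $m_2\neq 0$, then $g(t):=-m_1 t-m_2 f(t)$ has degree exactly $n\geq 2$, its leading coefficient being $-m_2 a_n\neq 0$, where $a_n$ is the leading coefficient of $f$. Before applying Theorem \ref{weil} I must check that $g$ is not of the exceptional form $c+h(t)^p-h(t)$; but any polynomial of that form is either constant (when $h$ is constant) or has degree $p\deg h$, hence a multiple of $p$, whereas $\deg g=n$ is not divisible by $p$. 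Weil's bound then gives $|\sum_t\chi(g(t))|\leq (n-1)\sqrt{q}$, so $|\hat{\Gamma}_f(m)|\leq (n-1)q^{-3/2}=(n-1)q^{-2}|\Gamma_f|^{1/2}$. Absorbing the constant $n-1$ for $q$ large — exactly as the constant $2$ is absorbed in Lemma \ref{salem_quadratic}, or equivalently reading this as a $\gamma$-Salem estimate for any fixed $\gamma>0$ — shows $\Gamma_f$ is Salem.

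For the intersection bound, fix a nonzero $x=(h,k)$ (the case $x=0$ being the excluded trivial one, as for $Z_f$ in the proof of Theorem \ref{VC_quadratic}). A point $(t,f(t))\in\Gamma_f$ lies in $\Gamma_f-x$ precisely when $(t+h,f(t)+k)\in\Gamma_f$, i.e. when $f(t+h)-f(t)-k=0$. If $h=0$ then $k\neq 0$ and this equation has no solutions. If $h\neq 0$ then $f(t+h)-f(t)=n a_n h\, t^{n-1}+(\text{lower-order terms})$ with $n a_n h\neq 0$ (again using $p\nmid n$), so $f(t+h)-f(t)-k$ has degree exactly $n-1$ in $t$ and hence at most $n-1$ roots. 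In either case $|\Gamma_f\cap(\Gamma_f-x)|\leq n-1$. The only step that is not pure bookkeeping is ruling out the exceptional polynomials in Weil's theorem, which I reduce to the degree-divisibility observation above; everything else is elementary once $p\nmid n$ is in force.
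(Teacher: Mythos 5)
Your proof is correct and follows essentially the same route as the paper: the cardinality is immediate, the Fourier bound reduces to Weil's estimate for the degree-$n$ polynomial $-m_2f(t)-m_1t$ (the paper reaches the same sum via the character-sum delta trick in the extra variable $r$), and the intersection bound comes from counting roots of $f(t+h)-f(t)-k$. Your explicit remarks — that $p\nmid n$ must be imported from Theorem \ref{odd_function} (the lemma is false without some such hypothesis, e.g. $f(x)=x^p$), that $x=0$ must be excluded from the intersection claim, and that $g$ is not of the Artin--Schreier form $c+h^p-h$ — are points the paper's terser proof glosses over, and they are handled correctly.
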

\v

\begin{proof}
We immediately get $|\Gamma_f|=q$ and $|\Gamma_f\cap (\Gamma_f-x)|\leq n-1$ by the fact that $f$ is a polynomial of degree $n$.  Let
\v
$$
f(t)=a_0+a_1t+\cdots +a_nt^n.
$$
\v

\n To see that it is a Salem set, we compute for $m\neq 0$,
\v
$$
\hat{\Gamma}_f(m)=q^{-2}\sum_{x\in \mathbb{F}_q^2}\chi(-m\cdot x)\Gamma_f(x)
=q^{-3}\sum_{x\in \mathbb{F}_q^2}\sum_{r\in \mathbb{F}_q}\chi(-m\cdot x)\chi(r(f(x_1)-x_2))
$$
$$
=q^{-3}\sum_r\left(\sum_{x_2}\chi(-(r+m_2)x_2)\right)\left(\sum_{x_1} \chi(rf(x_1)-m_1x_1)\right)
$$
\v

\n Here, $g(t)=rf(t)-m_1t$ is a degree $n$ polynomial which is not of the form $c+h^p-h$ for any polynomial $h$, since the degree is not divisible by $p$. Therefore, by Theorem \ref{weil} we have
\v
$$
\hat{\Gamma}_f(m)\lesssim q^{-\frac{5}{2}}\sum_r\sum_{x_2}\chi(-(r+m_2)x_2)=q^{-\frac{3}{2}}
$$
\end{proof}
\v

\begin{corollary}
    The conclusion of Theorem \ref{odd_function} follows immediately from Lemma \ref{shatter2}, Lemma \ref{odd_lemma} and Theorem \ref{shatter3}, since $\Gamma_f=-\Gamma_f$ if $f$ is an odd function.
\end{corollary}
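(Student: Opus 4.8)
The plan is to derive Theorem~\ref{odd_function} directly from Lemma~\ref{odd_lemma} together with Lemma~\ref{shatter2} (for the bound $\ge 2$) and Theorem~\ref{shatter3} (for the bound $\ge 3$ in the odd case), all applied with $d=2$. No new ingredients are needed; the whole content is checking that $\Gamma_f$ meets the hypotheses of those statements.

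First I would invoke Lemma~\ref{odd_lemma}: for $f$ of degree $n\geq 2$ with $p\nmid n$, the graph $\Gamma_f\subseteq\mathbb{F}_q^2$ is a Salem set (more precisely a $\gamma$-Salem set: when $n>2$ the Weil bound of Theorem~\ref{weil} contributes a factor $n-1$, which for fixed $n$ and $q\to\infty$ costs at most an arbitrarily small power or logarithm of $q$, and both Lemma~\ref{shatter2} and Theorem~\ref{shatter3} tolerate $\gamma$-Salem sets), $|\Gamma_f|=q=q^{d-1}$ exactly, and $|\Gamma_f\cap(\Gamma_f-x)|\leq n-1$ for every nonzero $x\in\mathbb{F}_q^2$. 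The hypothesis $p\nmid n$ is precisely what makes this step go through: it guarantees that for $a\neq 0$ the difference $f(t+a)-f(t)$ is a genuine degree $n-1$ polynomial in $t$ (so the intersection bound holds), and it is also the condition under which the relevant exponential sum is not of the excluded form $c+h^p-h$, so that the Weil bound applies.

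Next I would check the quantitative hypotheses with $d=2$. Writing $|\Gamma_f|=Kq^{d-1}(1+O(q^{-\alpha}))$ with $K=1$ and error term identically zero, the requirement $\alpha>\frac{d-1}{2}=\frac12$ is met with $\alpha$ as large as we please. Fixing any $\beta\in(0,\frac12)$, for all sufficiently large $q$ the constant $n-1$ satisfies $n-1\leq q^{\beta}$ (hence also $n-1\leq 2q^{\beta}$), so the intersection hypotheses $|\Gamma_f\cap(\Gamma_f-x)|\leq q^{\beta}$ of Lemma~\ref{shatter2} and $\leq 2q^{\beta}$ of Theorem~\ref{shatter3}, with $\beta<\frac{d-1}{2}$, both hold. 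For the point set, $\frac{15}{8}>\frac32=\frac{d+1}{2}$, so $|E|\geq cq^{15/8}\geq cq^{(d+1)/2}$ and Lemma~\ref{shatter2} yields $\text{VCdim}(\mathcal{H}_{\Gamma_f}(E))\geq 2$, with no symmetry used; this is the first assertion. If moreover $f$ is odd, then $f(-t)=-f(t)$ and
$$
-\Gamma_f=\{(-t,-f(t)):t\in\mathbb{F}_q\}=\{(-t,f(-t)):t\in\mathbb{F}_q\}=\{(u,f(u)):u\in\mathbb{F}_q\}=\Gamma_f,
$$
so $\Gamma_f$ is symmetric. Since $\frac{15}{8}=\frac{7d+1}{8}$ for $d=2$, the set $\Gamma_f$ now satisfies every hypothesis of Theorem~\ref{shatter3} (symmetric $\gamma$-Salem set, $|\Gamma_f|=q^{d-1}$ with $K=1$ and arbitrarily large $\alpha$, $|\Gamma_f\cap(\Gamma_f-x)|\leq n-1\leq 2q^{\beta}$ with $\beta<\frac12$, and $|E|\geq cq^{(7d+1)/8}$), and we conclude $\text{VCdim}(\mathcal{H}_{\Gamma_f}(E))\geq 3$.

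I do not expect a genuine obstacle: the theorem is a repackaging of Lemma~\ref{odd_lemma}, Lemma~\ref{shatter2} and Theorem~\ref{shatter3}, with all the substantive work — the Weil/Fourier estimate behind Lemma~\ref{odd_lemma} and the incidence-counting machinery behind the shattering results — already carried out in those statements. The only points requiring a moment's care in the assembly are the bookkeeping described above: substituting $d=2$ into the exponent $(7d+1)/8=15/8$, checking that the constant bound $n-1$ sits below $q^{\beta}$ for an admissible $\beta<\frac{d-1}{2}$ once $q$ is large, and the one-line verification that an odd $f$ forces $\Gamma_f=-\Gamma_f$.
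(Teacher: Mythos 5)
Your proposal is correct and follows essentially the same route as the paper: the paper's proof is precisely the one-line observation that Lemma \ref{odd_lemma} supplies the Salem and intersection hypotheses, Lemma \ref{shatter2} gives $\text{VCdim}\geq 2$, oddness gives $\Gamma_f=-\Gamma_f$ so Theorem \ref{shatter3} gives $\text{VCdim}\geq 3$. You merely spell out the bookkeeping (the role of $p\nmid n$, $K=1$, $\beta<\tfrac12$, $\tfrac{15}{8}=\tfrac{7d+1}{8}$ for $d=2$) that the paper leaves implicit, which is a fine and slightly more careful rendering of the same argument.
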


\subsection{Random sets}

For the remainder of Section \ref{examples}, we will consider the situation where $S\subseteq \mathbb{F}_q^2$ is randomly chosen, uniformly from the set of subsets of $\mathbb{F}_q^2$ of size $q$.  Throughout the section, for sets $S$ determined by such a random process, we will refer to them simply as random subsets of $\mathbb{F}_q^2$.  We will make use of the following result of Hayes \cite{H01}, which answers a question of Babai \cite{B02}.
\v

\begin{theorem}[Hayes]\label{random_salem}
Let $\e>0$.  Let $G$ be a finite abelian group of order $n$.  Let $k\leq n$, and let $m=\min(k,n-k)$.  For all but an $O(n^{-\e})$ fraction of subsets $S\subseteq G$ such that $|S|=k$,
\v
$$
\Phi(S):=\max_{\chi}\left|\sum_{x\in G}\chi(x)f(x)\right|
<2\sqrt{2(1+\e)m\log{n}},
$$
where this max is take over nontrivial characters of $G$.
\end{theorem}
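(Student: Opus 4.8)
The result is proved by combining a concentration inequality for the character sum attached to a single fixed nontrivial character with a union bound over the $n-1$ nontrivial characters of $G$. For a nontrivial $\chi$ write $X_\chi=\sum_{x\in G}\chi(x)\mathbf 1_S(x)=\sum_{x\in S}\chi(x)$, where $\mathbf 1_S$ is the indicator of $S$ and $S$ is a uniformly random $k$-element subset of $G$. Orthogonality of characters gives $\sum_{x\in G}\chi(x)=0$, so $\bE[X_\chi]=0$, and a direct second-moment computation using $\bE[\mathbf 1_S(x)]=k/n$ and $\bE[\mathbf 1_S(x)\mathbf 1_S(y)]=\tfrac{k(k-1)}{n(n-1)}$ for $x\neq y$ gives $\bE[|X_\chi|^2]=\tfrac{k(n-k)}{n-1}\leq m$. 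This already pins down $\sqrt m$ as the right order of magnitude; the content of the theorem is a tail bound strong enough to absorb a union bound over all the characters.

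First I would prove a sub-Gaussian tail for $X_\chi$. Each of $\re X_\chi=\sum_{x\in S}\re\chi(x)$ and $\im X_\chi=\sum_{x\in S}\im\chi(x)$ is the sum of the values of a function bounded by $1$ in absolute value over a uniformly random $k$-element subset, so by Hoeffding's inequality for sampling without replacement each satisfies $\Pr[\,|\re X_\chi|\geq t\,]\leq 2\exp(-ct^2/k)$ for an absolute constant $c>0$, and similarly for the imaginary part. When $k>n/2$ one works instead with the complement: since $\sum_{x\in G}\chi(x)=0$ we have $X_\chi=-\sum_{x\notin S}\chi(x)$, and $G\setminus S$ is a uniformly random subset of size $n-k<n/2$, so $k$ may be replaced by $m=\min(k,n-k)$ throughout. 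Combining the two coordinates (if $|\re X_\chi|<t$ and $|\im X_\chi|<t$ then $|X_\chi|<t\sqrt 2$) yields $\Pr[\,|X_\chi|\geq t\sqrt 2\,]\leq 4\exp(-ct^2/m)$.

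Next I would choose the threshold and take the union bound. Selecting $t$ to be a constant multiple of $\sqrt{(1+\e)m\log n}$ makes the per-character failure probability at most $n^{-(1+\e)}$ up to constants; summing over the at most $n$ nontrivial characters bounds the probability that $\Phi(S)$ exceeds the corresponding multiple of $\sqrt{(1+\e)m\log n}$ by $O(n^{-\e})$. Tracking the constants through Hoeffding's estimate then gives the stated threshold $2\sqrt{2(1+\e)m\log n}$, in fact with some room to spare.

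The main obstacle is the concentration step itself: $X_\chi$ is a sum over a random subset of \emph{fixed} cardinality $k$, so the summands $\mathbf 1_S(x)$ are not independent, and one must genuinely work in the sampling-without-replacement regime rather than with independent Bernoulli variables. In Hayes's argument this is handled by passing to the Doob martingale obtained by revealing the elements of $S$ one at a time and applying an Azuma-type inequality with a carefully controlled variance proxy; a vector-valued version of the inequality even lets one avoid the $\sqrt 2$ loss from splitting into real and imaginary parts. Once a clean sub-Gaussian estimate at the correct scale is in hand, the passage to $m=\min(k,n-k)$ via complementation, the union bound over the $n-1$ nontrivial characters, and the check that the $(1+\e)$ slack in the exponent absorbs the $\log n$ coming from the union bound as $n\to\infty$ are all routine.
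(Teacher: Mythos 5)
This statement is imported from Hayes \cite{H01}; the paper states it without proof, so there is no internal argument to compare against. Judged on its own terms, your sketch is sound and is essentially the standard route (and the one underlying Hayes's paper): mean zero by orthogonality, second moment $k(n-k)/(n-1)\leq m$, a sub-Gaussian tail for a sum over a uniformly random $k$-subset via Hoeffding's sampling-without-replacement inequality (or equivalently Azuma on the Doob martingale of revealed elements), complementation to replace $k$ by $m=\min(k,n-k)$, and a union bound over the $n-1$ nontrivial characters. The constants do check out with your crude real/imaginary splitting: for values in $[-1,1]$ Hoeffding gives a per-coordinate tail $2\exp\left(-t^2/(2m)\right)$, so $\Pr\left[\,|X_\chi|\geq \sqrt{2}\,t\,\right]\leq 4\exp\left(-t^2/(2m)\right)$, and taking $\sqrt{2}\,t=2\sqrt{2(1+\e)m\log n}$ yields a per-character failure probability of $4n^{-2(1+\e)}$, hence $O(n^{-1-2\e})\cdot n=O(n^{-\e})$ after the union bound, with room to spare as you say. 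The only step that genuinely needs care is the concentration for fixed-size random subsets, and you correctly flag that independence fails there and that either Hoeffding's convex-ordering reduction or a martingale argument (Hayes's vector-valued Azuma, which also avoids the $\sqrt{2}$ loss) fills the gap. So the proposal is a correct reconstruction of the cited result, if slightly lossier in constants than Hayes's actual argument.
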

\v

\begin{corollary}
Translating this to our notation in the case $G=\langle \mathbb{F}_q^2,+\rangle$, $S\subseteq \mathbb{F}_q^d$ chosen randomly with $|S|=q$, we see that with probablity $1-o(1)$, for all $m\neq 0$,
\v

$$
|\hat{S}(m)|\lesssim q^{-\frac{3}{2}}\sqrt{\log{q}}.
$$
\v

\n In other words, $S$ is a $\frac{1}{2}$-Salem set.
\end{corollary}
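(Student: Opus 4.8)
The plan is to apply Theorem \ref{random_salem} (Hayes) directly, with $G=\langle\mathbb{F}_q^2,+\rangle$, so that $n=|G|=q^2$, and with $k=|S|=q$. First I would record the relevant parameters: since $q\le q^2/2$ for $q\ge 2$, we have $m=\min(k,n-k)=q$. Substituting into Hayes's bound, for any fixed $\e>0$, all but an $O(n^{-\e})=O(q^{-2\e})$ fraction of subsets $S\subseteq\mathbb{F}_q^2$ with $|S|=q$ satisfy
$$
\Phi(S)=\max_{\chi}\left|\sum_{x\in\mathbb{F}_q^2}\chi(x)S(x)\right|<2\sqrt{2(1+\e)\,q\log(q^2)}=4\sqrt{(1+\e)\,q\log q},
$$
where the maximum runs over nontrivial additive characters and $S(x)$ denotes the indicator of $S$, consistent with our convention elsewhere. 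Since $q^{-2\e}\to 0$, this event holds with probability $1-o(1)$.

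Next I would convert the bound on $\Phi(S)$ into a pointwise bound on $\hat{S}$. For any $m\ne 0$, the function $x\mapsto\chi(-m\cdot x)$ is a nontrivial additive character of $\mathbb{F}_q^2$, hence is among the characters competing in $\Phi(S)$. Recalling $\hat{S}(m)=q^{-2}\sum_{x}\chi(-m\cdot x)S(x)$, we get
$$
|\hat{S}(m)|\le q^{-2}\,\Phi(S)<4\sqrt{1+\e}\,q^{-3/2}\sqrt{\log q}
$$
uniformly over all $m\ne 0$, which is exactly the asserted estimate $|\hat{S}(m)|\lesssim q^{-3/2}\sqrt{\log q}$.

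Finally I would compare this with the definition of a $\gamma$-Salem set: with $d=2$ and $|S|=q$, that definition requires $|\hat{S}(m)|\le q^{-2}(\log q)^{\gamma}|S|^{1/2}=q^{-3/2}(\log q)^{\gamma}$ for all $m\ne 0$, so $\gamma=\tfrac12$ produces precisely the order of magnitude found above. There is no real obstacle here — the content is entirely in Hayes's theorem, which we are free to invoke — and the only point requiring care is the bookkeeping: one must use $\log(q^2)=2\log q$ so that the power of $\log q$ comes out to exactly $\tfrac12$, and one must deal with the harmless multiplicative constant $4\sqrt{1+\e}$, which for large $q$ may either be tracked as an implied constant in the $\lesssim$ or absorbed by enlarging $\gamma$ by an arbitrarily small amount.
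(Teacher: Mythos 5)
Your proposal is correct and is essentially the same argument the paper intends: apply Hayes's theorem with $n=q^2$, $k=m=q$, note that every nontrivial character of $\langle\mathbb{F}_q^2,+\rangle$ has the form $x\mapsto\chi(-m\cdot x)$, and divide by $q^2$ to convert $\Phi(S)$ into the bound $|\hat{S}(m)|\lesssim q^{-3/2}\sqrt{\log q}$. Your bookkeeping with $\log(q^2)=2\log q$ and the absorbed constant matches the paper's use of $\lesssim$ in the corollary.
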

\v

\begin{lemma}\label{random_properties}
If $S\subseteq \mathbb{F}_q^2$ is a random subset, then $S$ satisfies the hypotheses of Theorem \ref{shatter3}, with the exception of the symmetry property $S=-S$, with probability $1-o(1)$ as $q\to \infty$.  In other words, with high probability, we have
\v

\begin{enumerate}
\item $|S|=q(1+O(q^{-\alpha}))$, $\alpha>\frac{1}{2}$.
\v

\item $S$ is a $\gamma$-Salem set with $\gamma=\frac{1}{2}$.
\v

\item $|S\cap (S-x)|\lesssim  q^{\beta}$, $\beta<\frac{d-1}{2}$ for all nonzero $x\in \mathbb{F}_q^d$.
    
\end{enumerate}
    
\end{lemma}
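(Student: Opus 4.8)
The plan is to dispose of parts (1) and (2) in a line each and concentrate on part (3). Part (1) is vacuous: since $|S| = q$ by construction, the relation $|S| = q(1+O(q^{-\alpha}))$ holds with error term identically zero, so it is true for any $\alpha$, in particular some $\alpha > \tfrac12$. Part (2) is precisely the Corollary to Theorem \ref{random_salem}: with probability $1 - o(1)$ one has $|\hat{S}(m)| \lesssim q^{-3/2}\sqrt{\log q} = q^{-d}(\log q)^{1/2}|S|^{1/2}$ for every $m \neq 0$ (here $d = 2$ and $|S| = q$), so $S$ is a $\tfrac12$-Salem set.

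For part (3) I would argue by the method of moments, followed by a union bound over the $q^2 - 1$ nonzero translation vectors. Fix $x \neq 0$ and put $N_x := |S \cap (S-x)| = \#\{a : a, a+x \in S\}$. Introduce the translation graph $T_x$ on $\mathbb{F}_q^2$, having an edge $\{a, a+x\}$ for each $a$; since every nonzero element of $\mathbb{F}_q^2$ has additive order $p := \operatorname{char}(\mathbb{F}_q)$, the graph $T_x$ is a disjoint union of $q^2/p$ cycles of length $p$ (a perfect matching when $p = 2$, a case I would handle separately — it is easier, since then $|V(F)| = 2|F|$ for every edge set $F$). For $p$ odd, $N_x$ equals the number of edges of $T_x$ with both endpoints in $S$, so for any integer $t \ge 1$,
$$
\Pr(N_x \ge t) \;\le\; \mathbb{E}\binom{N_x}{t} \;=\; \sum_{\substack{F \subseteq E(T_x)\\ |F| = t}} \Pr\big(V(F) \subseteq S\big),
$$
where $V(F)$ is the set of endpoints of the edges in $F$. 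Into this I would insert two elementary facts: the probability estimate $\Pr(V(F) \subseteq S) = (q)_{|V(F)|}\,/\,(q^2)_{|V(F)|} \le 2q^{-|V(F)|}$ (valid since $|V(F)| \le 2t \ll q$), and the structural identity $|V(F)| = t + R(F)$, where $R(F)$ is the number of path components of $(V(F), F)$ — every component being a path or an entire cycle of $T_x$, and an entire cycle having as many vertices as edges. Classifying $F$ by $R(F)$ and by the number $j$ of full-cycle components (which accounts for $jp$ of the $t$ edges), and bounding the number of placements of the $R$ paths and the $j$ full cycles, I expect the sum to collapse to an estimate of the form
$$
\Pr(N_x \ge t) \;\lesssim\; \frac{2^t}{t!},
$$
for $t$ up to a small positive power of $q$ and uniformly in $p$: the $j = 0$ contribution is already $\asymp 2^t/t!$, while the terms $j \ge 1$ form a rapidly convergent series whose sum is controlled by the $j=0$ term as soon as $p \ge 3$.

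Granting this, I would take $t = C\log q$ for an arbitrary fixed $C > 0$ and conclude by a union bound that
$$
\Pr\Big(\max_{x \neq 0}|S \cap (S-x)| \ge C\log q\Big) \;\le\; (q^2-1)\,\frac{2^{C\log q}}{(C\log q)!} \;\le\; q^{\,2 + C\log 2 - \frac{C}{3}\log\log q} \;\longrightarrow\; 0
$$
as $q \to \infty$, using $(C\log q)! \ge (C\log q/e)^{C\log q} \ge q^{(C/3)\log\log q}$ for $q$ large. Hence, with probability $1 - o(1)$, $|S \cap (S-x)| \le C\log q \ll q^{\beta}$ for every $\beta > 0$ and every $x \neq 0$; in particular (3) holds for some fixed $\beta < \tfrac{d-1}{2} = \tfrac12$, which is all that is required.

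The hard part will be the combinatorial accounting behind the estimate $\Pr(N_x \ge t) \lesssim 2^t/t!$. The crude bound $|V(F)| \ge t$ is useless here, since it produces individual terms of order $\sim q^t/t!$, which are enormous when $t \asymp \log q$; one really must exploit that the configurations $F$ with few vertices are exactly those with many full-cycle components — and these are rare — and then verify that their aggregate weight never overtakes that of the generic ``matching-like'' configurations, the only delicate regime being $3 \le p \lesssim \log q$, where both kinds of configuration contribute comparably.
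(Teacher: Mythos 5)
Your parts (1) and (2) coincide with the paper's proof: (1) is immediate since $|S|=q$ exactly, and (2) is exactly the corollary of Hayes' theorem. For (3) you take a genuinely different and stronger route. The paper computes only the first moment: for a \emph{fixed} $x\neq 0$, $\mathbb{E}\,|S\cap(S-x)|=\tfrac{q}{q+1}\leq 1$ (sampling without replacement), and then applies Markov to get $P\bigl(|S\cap(S-x)|>q^{\beta}\bigr)\leq q^{-\beta}$ for that one $x$; no union bound over $x$ is performed. You instead bound the binomial moment $\mathbb{E}\binom{N_x}{t}$ via the path/cycle decomposition of the translation graph, aim for $\Pr(N_x\geq t)\lesssim C^{t}/t!$, and union-bound over the $q^{2}-1$ translates with $t\asymp\log q$. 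This buys two things: a much stronger conclusion ($\max_{x\neq 0}|S\cap(S-x)|=O(\log q)$ rather than $q^{\beta}$), and, more importantly, genuine uniformity in $x$ --- which is what the lemma asserts and what Theorem \ref{shatter3} requires, and which the paper's per-$x$ Markov bound does not by itself deliver, since a union bound on $q^{-\beta}$ over $q^{2}$ vectors fails for $\beta<\tfrac12$. The step you defer does go through along the lines you indicate: a configuration with $R$ path components and $j$ full $p$-cycles carries weight at most $2q^{-t-R}$ against at most $q^{2R}(q^{2}/p)^{j}/(R!\,j!)$ placements (times a composition count $\leq 2^{t}$); the $j=0$, $R=t$ matchings dominate with total weight $\asymp 1/t!$, and for $p\geq 3$ each $j\geq 1$ term loses $q^{-j(p-2)}$, which overwhelms the combinatorial gain $t!/(R!\,j!)\leq t^{jp}$ precisely because $t\asymp\log q$; the $p=2$ matching case is immediate. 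So your proposal is correct modulo writing out that bookkeeping; it is considerably more work than the paper's one-line Markov argument, but it is the kind of argument one needs if the ``for all nonzero $x$'' in the statement is to be obtained with high probability simultaneously rather than for each $x$ separately.
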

\v

\begin{proof}
(1) follows immediately from the definition of $S$, since $|S|=q$ exactly. (2) follows immediately from Theorem \ref{random_salem}. It remains to prove (3).  Fix $x\in \mathbb{F}_q^2$, and let 
\v

$$
\Omega^x
=|S\cap (S-x)|
=\sum_{y\in \mathbb{F}_q^2}S(y)S(x+y)
=\sum_{y\in \mathbb{F}_q^2}\Omega_y^x,
$$
\v

\n where $\Omega_y^x=1$ in the event that $y\in S\cap (S-x)$.  We see that
\v

$$
\mathbb{E}[\Omega_y^x]=P(\Omega_y^x=1)
=\frac{1}{q(q+1)},
$$
and so
$$
\mathbb{E}[\Omega^x]=\sum_{y}\mathbb{E}[\Omega_y^x]=\frac{q}{q+1}\leq 1.
$$
\v

\n Therefore by Markov's inequality,

$$
P(\Omega^x>q^{\beta})\leq q^{-\beta}=o(1).
$$
\end{proof}
\v

\begin{lemma}\label{symmetrize}
Suppose $T=S\cup (-S)$, and $S$ is a set, not necessarily random, which satisfies the three properties of Lemma \ref{random_properties}. Then as long as $|S\cap (-S)|=O(q^{1-\alpha}),$ $T$ satisfies properties (1) and (2), with the exception that we gain a factor of 2 in the first property, i.e.
\v

$$
|T|=2q(1+O(q^{-\alpha})).
$$
    
\end{lemma}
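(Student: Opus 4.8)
The plan is to derive both conclusions from elementary inclusion--exclusion after writing the indicator of $T=S\cup(-S)$ in terms of those of $S$, $-S$, and $R:=S\cap(-S)$. The cardinality bound is immediate: $|T|=|S|+|{-S}|-|S\cap(-S)|=2|S|-|R|$, and since $|{-S}|=|S|=q(1+O(q^{-\alpha}))$ by property (1) for $S$, while $|R|=O(q^{1-\alpha})=q\cdot O(q^{-\alpha})$ is of the same order as the error already present, we get $|T|=2q(1+O(q^{-\alpha}))$, which is property (1) with the claimed extra factor of $2$.

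For the Salem bound I would begin from the pointwise identity $T(x)=S(x)+S(-x)-R(x)$, valid for every $x\in\mathbb{F}_q^d$ (a one-line case check, or just $\max=a+b-\min$ on $\{0,1\}$-valued functions), and apply the Fourier transform; the substitution $y=-x$ in the middle term gives $\widehat{S(-\cdot)}(m)=\hat S(-m)$ (this is precisely the computation in the proof of Lemma~\ref{linear} applied to the reflection $x\mapsto -x$), so
\[
\hat T(m)=\hat S(m)+\hat S(-m)-\hat R(m).
\]
Now fix $m\neq 0$. Then both $m$ and $-m$ are nonzero, so the $\gamma$-Salem property of $S$ with $\gamma=\tfrac12$ bounds each of $|\hat S(m)|$ and $|\hat S(-m)|$ by $q^{-d}(\log q)^{1/2}|S|^{1/2}$, while the last term is controlled crudely by $|\hat R(m)|\leq q^{-d}|R|=O(q^{-d}q^{1-\alpha})$. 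Hence $|\hat T(m)|\leq 2q^{-d}(\log q)^{1/2}|S|^{1/2}+O(q^{-d}q^{1-\alpha})$. Because $\alpha>\tfrac12$ (inherited from property (1) for $S$), the term $q^{1-\alpha}$ is $o(q^{1/2})=o((\log q)^{1/2}|S|^{1/2})$ and is absorbed, and $|S|\leq|T|$ since $S\subseteq T$, so $|\hat T(m)|\lesssim q^{-d}(\log q)^{1/2}|T|^{1/2}$. This is property (2) for $T$ in the same ``up to an absolute constant'' sense in which property (2) holds for the random set $S$ in Lemma~\ref{random_properties} (there the constant comes from Hayes' theorem; here it is the factor $2$, i.e.\ $\sqrt 2$ after comparing $|S|^{1/2}$ with $|T|^{1/2}$).

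The only point requiring any care --- and the reason the hypotheses $\alpha>\tfrac12$ and $|S\cap(-S)|=O(q^{1-\alpha})$ are imposed --- is verifying that the intersection-correction term $\hat R(m)$ stays comfortably below the main contribution, of size $\approx q^{-d}(q\log q)^{1/2}$; everything else is bookkeeping. I do not expect to need property (3) of $S$ (the $|S\cap(S-x)|$ bound) for this lemma, since the conclusion concerns only properties (1) and (2) of $T$; were property (3) for $T$ wanted downstream it would follow from the fourfold union bound $|T\cap(T-x)|\leq|S\cap(S-x)|+|S\cap(-S-x)|+|(-S)\cap(S-x)|+|(-S)\cap(-S-x)|$.
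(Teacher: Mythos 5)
Your proposal is correct and follows essentially the same route as the paper: the trivial inclusion--exclusion count for $|T|$, the pointwise identity $T(x)=S(x)+S(-x)-R(x)$ with $R$ the indicator of $S\cap(-S)$, and the Fourier bound $|\hat T(m)|\lesssim q^{-3/2}\sqrt{\log q}+|\hat R(m)|$ with $|\hat R(m)|\leq |R|/q^{2}=O(q^{-1-\alpha})$ absorbed using $\alpha>\tfrac12$. Your write-up is only more explicit than the paper's about the reflection identity $\widehat{S(-\cdot)}(m)=\hat S(-m)$ and the constant-factor bookkeeping, which is fine.
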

\v

\begin{proof}
Property (1) is trivial.  Property (2) follows by direct calculation.  Let 
\v

$$
R(x)=S(x)S(-x),
$$
\v

\n so that $T(x)=S(x)+S(-x)-R(x)$. Then,
\v

$$
|\hat{T}(m)|\lesssim q^{-\frac{3}{2}}\sqrt{\log{\gamma}}+|\hat{R}(m)|,
$$
\n where
\v

$$
|\hat{R}(m)|\leq \frac{|R|}{q^2}=O(q^{-1-\alpha}).
$$
    
\end{proof}

\n We are now ready to prove Theorem \ref{VC_random}.
\v

\begin{proof}[Proof of Theorem \ref{VC_random}]

This follows immediately from Theorem \ref{shatter3}, Lemma \ref{random_properties}, and Lemma \ref{symmetrize}, as long as we can prove that the intersection $T\cap (T-x)$ is small.  But this follows immediately by writing
\v

$$
T\cap (T-x)=
(S\cap (S-x))\cup (S\cap (-S-x))\cup (-S\cap (S-x))\cup (-S\cap (-S-x)).
$$
\v

\n The argument that showed $S\cap(S-x)$ is small with high probability applies to these other sets just the same.

\end{proof}
\v

\begin{corollary}\label{finish}
The symmetrized parabola $P=\{(x,x^2)\}\cup \{(x,-x^2)\}$ satisfies the hypotheses of Theorem \ref{shatter3}, and therefore the conclusion of Theorem \ref{VC_quadratic} follows.
    
\end{corollary}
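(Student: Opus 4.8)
The plan is to put $S_0=\{(x,x^2):x\in\mathbb{F}_q\}$, so that $-S_0=\{(x,-x^2):x\in\mathbb{F}_q\}$ and $P=S_0\cup(-S_0)$; the symmetry $P=-P$ is then built in, and what remains is to verify the cardinality, Salem, and intersection hypotheses of Theorem \ref{shatter3} in dimension $d=2$, and then to read off the consequence for Theorem \ref{VC_quadratic}. First I would apply Lemma \ref{odd_lemma} with $f(t)=t^2$ (legitimate since $q$ is odd, so $p\nmid 2$): it gives $|S_0|=q$, that $S_0$ is a Salem set --- hence in particular a $\tfrac12$-Salem set --- and that $|S_0\cap(S_0-x)|\le 1$ for every nonzero $x$. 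So $S_0$ satisfies the three numbered conditions of Lemma \ref{random_properties}, with any $\alpha\le 1$ and with $\beta=0$, even though $S_0$ is not random.

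Next I would check that $S_0\cap(-S_0)$ is tiny: a point $(a,b)$ lies in both sets precisely when $b=a^2=-a^2$, which forces $2a^2=0$ and hence $a=0$ since $q$ is odd, so $S_0\cap(-S_0)=\{(0,0)\}$ and $|S_0\cap(-S_0)|=1=O(q^{1-\alpha})$ for any $\alpha\in(\tfrac12,1]$. Lemma \ref{symmetrize} then applies to $T=P=S_0\cup(-S_0)$ and yields that $P$ is a $\tfrac12$-Salem set with $|P|=2q(1+O(q^{-\alpha}))$. Taking $K=2$ and $\alpha>\tfrac{d-1}{2}=\tfrac12$, this disposes of the cardinality and Salem hypotheses of Theorem \ref{shatter3} for $S=P$.

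The final hypothesis is $|P\cap(P-x)|\le 2q^{\beta}$ with $\beta<\tfrac12$, for all nonzero $x$. Expanding,
$$
P\cap(P-x)=\bigl(S_0\cap(S_0-x)\bigr)\cup\bigl(S_0\cap(-S_0-x)\bigr)\cup\bigl((-S_0)\cap(S_0-x)\bigr)\cup\bigl((-S_0)\cap(-S_0-x)\bigr).
$$
The first and last sets have size at most $1$ by Lemma \ref{odd_lemma}, using $(-S_0)\cap(-S_0-x)=-\bigl(S_0\cap(S_0+x)\bigr)$ for the last. For the cross terms, $(a,b)\in S_0\cap(-S_0-x)$ forces $b=a^2$ and $b+x_2=-(a+x_1)^2$, i.e. $2a^2+2x_1a+x_1^2+x_2=0$, a quadratic in $a$ with nonzero leading coefficient and hence at most two solutions; the same bound holds for $(-S_0)\cap(S_0-x)$ by reflecting. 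Thus $|P\cap(P-x)|\le 6$ for every nonzero $x$, so taking $\beta=\tfrac14$ we get $2q^{\beta}\ge 6$ once $q$ is large. All hypotheses of Theorem \ref{shatter3} now hold for $S=P$, $d=2$, and the standing assumption $|E|\ge cq^{15/8}=cq^{(7\cdot 2+1)/8}$ is exactly what that theorem requires, so $\text{VCdim}(\mathcal{H}_P(E))\ge 3$; via the affine reduction already made in the proof of Theorem \ref{VC_quadratic} (a bijective change of coordinates, which does not alter the VC-dimension of these hypothesis classes), this is precisely the missing assertion $\text{VCdim}(\mathcal{H}_T(E))\ge 3$ in the degenerate case.

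The only genuinely new computation here is the cross-term bound $|P\cap(P-x)|\le 6$; everything else is bookkeeping with Lemmas \ref{odd_lemma}, \ref{random_properties}, and \ref{symmetrize}. The one subtlety is that the constant $6$ does not fit under $2q^{0}$, so one must pass to a small positive exponent $\beta$ --- harmless, since the whole statement is asymptotic in $q$.
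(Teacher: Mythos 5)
Your proposal is correct and follows essentially the same route as the paper: verify that the parabola branch is a Salem set with $q$ points, symmetrize, and check the intersection bound $|P\cap(P-x)|\le 6$ so that Theorem \ref{shatter3} applies. In fact you supply details the paper's two-line proof leaves implicit (the explicit cross-term computation giving $6$, the use of Lemma \ref{symmetrize}, and the observation that the constant $6$ must be absorbed by taking a small positive $\beta$ rather than $\beta=0$), all of which are sound.
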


\begin{proof}

    Since the symmetrized parabola $P=P^+\cup P^-$ satisfies the finite intersection property
$$
|P\cap (P-x)|\leq 6,
$$
and $P^+$ is a Salem set with $q$ elements, we see that $P$ satisfies the hypotheses of Theorem \ref{shatter3}.
\end{proof}
\v

\section{Proof of Theorem \ref{shatter3}}\label{main_section}
\v

\begin{lemma}\label{salem_edge}
Suppose that $S\subseteq \mathbb{F}_q^d$,
\v

$$
|\hat{S}(m)|\lesssim q^{-\frac{d+1}{2}}(\log{q})^{\gamma},
$$
\v

\n and $|S|=Kq^{d-1}(1+O(q^{-\alpha}))$ for some constant $K$, $\alpha>\frac{d-1}{2}$.  Then it follows that
\v

$$
\nu_S(E):=|\{(x,y)\in E^2: x-y\in S\}|=K\frac{|E|^2}{q}+\cE,
$$
where
$$
|\cE|\lesssim q^{\frac{d-1}{2}}(\log{q})^{\gamma}|E|
$$
\\
In particular, if $|E|\geq cq^{\frac{d+1}{2}}(\log{q})^{\gamma}$ for a sufficiently large constant $c$, it follows that 
\v

$$
\nu_S(E)\geq K\frac{|E|^2}{2q}
$$
\end{lemma}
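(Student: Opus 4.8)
The plan is to run the standard Fourier argument: expand $\nu_S(E)$, extract the frequency-zero term as the main contribution, and dominate the rest using the Salem hypothesis and Plancherel. Writing $S,E\colon\mathbb{F}_q^d\to\{0,1\}$ for the indicator functions, I would start from
\[
\nu_S(E)=\sum_{x,y\in\mathbb{F}_q^d}E(x)E(y)S(x-y),
\]
substitute the Fourier inversion formula $S(x-y)=\sum_{m}\hat S(m)\chi(m\cdot(x-y))$, factor the resulting double sum over $x$ and $y$, and use $\sum_x E(x)\chi(m\cdot x)=q^d\hat E(-m)$ together with $\hat E(-m)=\overline{\hat E(m)}$ (as $E$ is real-valued). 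This yields the clean identity
\[
\nu_S(E)=q^{2d}\sum_{m\in\mathbb{F}_q^d}\hat S(m)\,|\hat E(m)|^2.
\]

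Next I would isolate $m=0$. Since $\hat S(0)=q^{-d}|S|$ and $\hat E(0)=q^{-d}|E|$, that term equals $q^{-d}|S||E|^2$, and plugging in $|S|=Kq^{d-1}(1+O(q^{-\alpha}))$ gives $K|E|^2/q$ together with a main-term error of size $O\!\left(Kq^{-1-\alpha}|E|^2\right)$. For the remaining frequencies I would pull out $\max_{m\neq0}|\hat S(m)|\lesssim q^{-\frac{d+1}{2}}(\log q)^\gamma$ and apply the Plancherel identity $\sum_m|\hat E(m)|^2=q^{-d}|E|$, obtaining
\[
\Bigl|\,q^{2d}\sum_{m\neq0}\hat S(m)|\hat E(m)|^2\,\Bigr|\;\lesssim\; q^{2d}\cdot q^{-\frac{d+1}{2}}(\log q)^\gamma\cdot q^{-d}|E|\;=\;q^{\frac{d-1}{2}}(\log q)^\gamma|E|.
\]

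The one step that actually requires thought — and the place I would be most careful — is checking that the main-term error $O(Kq^{-1-\alpha}|E|^2)$ is also absorbed into $\cE$. Here I would use the trivial bound $|E|\leq q^d$ to write $q^{-1-\alpha}|E|^2\leq q^{d-1-\alpha}|E|$, and then the hypothesis $\alpha>\frac{d-1}{2}$ gives exactly $q^{d-1-\alpha}\lesssim q^{\frac{d-1}{2}}$ (with $K$ a constant absorbed into the implied constant). Combining the two estimates gives $\nu_S(E)=K|E|^2/q+\cE$ with $|\cE|\lesssim q^{\frac{d-1}{2}}(\log q)^\gamma|E|$, as claimed. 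Everything else is routine bookkeeping; the substantive content is the interplay between $\alpha>\frac{d-1}{2}$ and the size of the error term.

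Finally, for the "in particular" clause: the hypothesis $|E|\geq cq^{\frac{d+1}{2}}(\log q)^\gamma$ is equivalent to $q^{\frac{d-1}{2}}(\log q)^\gamma\leq |E|/(cq)$, so writing $|\cE|\leq Cq^{\frac{d-1}{2}}(\log q)^\gamma|E|$ for the implied constant $C$ gives $|\cE|\leq (C/c)\,|E|^2/q$. Choosing $c\geq 2C/K$ makes $|\cE|\leq \tfrac12 K|E|^2/q$, hence $\nu_S(E)\geq K|E|^2/(2q)$.
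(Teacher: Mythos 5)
Your proposal is correct and follows essentially the same route as the paper: expand $\nu_S(E)$ via Fourier inversion, peel off the $m=0$ term using $|S|=Kq^{d-1}(1+O(q^{-\alpha}))$, bound the nonzero frequencies by the Salem estimate plus Plancherel, and absorb the $O(q^{-1-\alpha}|E|^2)$ term using $|E|\leq q^d$ and $\alpha>\frac{d-1}{2}$, exactly as the paper does. The only difference is cosmetic: your explicit constant-chasing for the ``in particular'' clause is spelled out rather than asserted.
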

\v

\begin{proof}
We generalize the method of Iosevich and Rudnev \cite{IR07}, who obtained this result in the special case when $S$ is a sphere of radius $t\neq 0$.  
\v

$$
\sum_{x,y\in \mathbb{F}_q^d}E(x)E(y)S(x-y)
=\sum_{x,y,m\in \mathbb{F}_q^d}E(x)E(y)\chi(m\cdot (x-y))\hat{S}(m)
=q^{2d}\sum_m\overline{\hat{E}(m)}\hat{E}(m)\hat{S}(m)
$$
\v

$$
=\frac{|E|^2 |S|}{q^d}+q^{2d}\sum_{m\ne 0}|\hat{E}(m)|^2\hat{S}(m)
=K\frac{|E|^2}{q}(1+O(q^{-\alpha}))+q^{2d}\sum_{m\neq 0}|\hat{E}(m)|^2\hat{S}(m)
$$
\v

\n The $O(q^{-\alpha})$ term is irrelevant, since for any $E\subseteq \mathbb{F}_q^2$, we have $\frac{|E|^{2}}{q^{1+\alpha}}\leq q^{\frac{d-1}{2}}|E|$.
To bound the terms with $m\neq 0$, we use the pointwise bound $|\hat{S}(m)|\lesssim q^{-\frac{d+1}{2}}$, followed by Cauchy-Schwarz and Plancharel.
\v

$$
q^{2d}\sum_{m\neq 0}|\hat{E}(m)|^2\hat{S}(m)
\lesssim q^{2d}q^{-\frac{d+1}{2}}(\log{q})^{\gamma}\sum_{m\in \mathbb{F}_q^2}|\hat{E}(m)|^2
$$
$$
=q^dq^{-\frac{d+1}{2}}(\log{q})^{\gamma}\left(\sum_x|E(x)|^2\right)
=q^{\frac{d-1}{2}}(\log{q})^{\gamma}|E|
$$
\end{proof}
\v

\n We can use this to demonstrate an abundance of tuples $(x^1,x^2,x^3,x^4)$ with vertices contained in $E\subseteq \mathbb{F}_q^d$, satisfying $x^1-x^2=x^3-x^4\in S$ and $x^1-x^3=x^2-x^4\in S$.  This generalizes the method of \cite{FIMW23}, which handles the special case where $S$ is a circle and so we would be constructing rhombi.
\v

\begin{lemma}\label{rhombus}
Suppose that $E\subseteq \mathbb{F}_q^d$, $|E|\geq cq^{\frac{3d+1}{4}}(\log{q})^{\gamma/2}$
for a sufficiently large constant $c$. Let $S\subseteq \mathbb{F}_q^d$ be a set satisfying the conditions of Lemma \ref{salem_edge}. Fix a non-zero vector $v\in \mathbb{F}_q^d$.  Then there exist distinct $x^1,x^2,x^3,x^4\in E$ such that
\v

$$
x^1-x^2=x^3-x^4\in S, \ \ x^1-x^3=x^2-x^4\in S.
$$
\v

\n and $x^i-x^j$ is not equal to $\pm v$ for any $i,j$.
    
\end{lemma}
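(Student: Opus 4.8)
The plan is to count the ordered quadruples $(x^1,x^2,x^3,x^4)\in E^4$ satisfying $x^1-x^2=x^3-x^4\in S$ and $x^1-x^3=x^2-x^4\in S$ (when $S$ is a circle these are exactly the rhombi of \cite{FIMW23}), show this count dominates the number of degenerate quadruples and the number of quadruples realizing a difference equal to $\pm v$, and conclude that a good quadruple survives. Such a quadruple is determined by $x^1\in E$ together with its two ``sides'' $a:=x^1-x^2$ and $b:=x^1-x^3$ in $S$, via $x^2=x^1-a$, $x^3=x^1-b$, $x^4=x^1-a-b$. Writing $E_b:=E\cap(E+b)$ for $b\in S$, a quadruple with prescribed second side $b$ is precisely a pair $(x^1,x^2)\in E_b\times E_b$ with $x^1-x^2\in S$, so if $N$ is the number of quadruples then
$$
N=\sum_{b\in S}\nu_S(E_b);
$$
moreover $\sum_{b\in S}|E_b|=\nu_S(E)$ via the bijection $(x,y)\mapsto(x,x-y)$ on $\{(x,y)\in E^2:x-y\in S\}$.

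The crux is estimating $N$. Lemma \ref{salem_edge} gives $\nu_S(F)=K|F|^2/q+O\!\big(q^{(d-1)/2}(\log q)^{\gamma}|F|\big)$ for \emph{every} $F\subseteq\mathbb{F}_q^d$ — its size hypothesis is used only for the concluding lower bound — so applying this with $F=E_b$ and summing over $b\in S$,
$$
N=\frac{K}{q}\sum_{b\in S}|E_b|^2+O\!\big(q^{(d-1)/2}(\log q)^{\gamma}\nu_S(E)\big).
$$
Cauchy--Schwarz over the $|S|$ summands gives $\sum_{b\in S}|E_b|^2\ge\nu_S(E)^2/|S|$. Inserting the lower bound $\nu_S(E)\ge K|E|^2/(2q)$ of Lemma \ref{salem_edge} (its hypothesis $|E|\gtrsim q^{(d+1)/2}(\log q)^{\gamma}$ follows from ours since $\tfrac{3d+1}{4}\ge\tfrac{d+1}{2}$) and $|S|=Kq^{d-1}(1+O(q^{-\alpha}))$, the main term becomes $\gtrsim |E|^4/q^{d+2}$, which dominates the error term $\lesssim q^{(d-3)/2}(\log q)^{\gamma}|E|^2$ exactly when $|E|^2\gg q^{(3d+1)/2}(\log q)^{\gamma}$, i.e. under our hypothesis $|E|\ge cq^{(3d+1)/4}(\log q)^{\gamma/2}$ with $c$ large. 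Hence $N\gtrsim |E|^4/q^{d+2}$.

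It remains to remove the unwanted quadruples. The differences among the $x^i$ are exactly $\pm a,\pm b,\pm(a-b),\pm(a+b)$, so the quadruple has a repeated point iff one of $a,b,a-b,a+b$ vanishes, and it realizes $\pm v$ iff one of $a,b,a-b,a+b$ equals $\pm v$. For each of these finitely many linear constraints on $(a,b)$, a contributing quadruple still has $x^1\in E$ and $x^1-b'\in E$ for whichever of $a,b$ remains free (call it $b'\in S$), so there are at most $\sum_{b'\in S}|E_{b'}|=\nu_S(E)$ of them; thus the bad quadruples number $O(\nu_S(E))=O(|E|^2/q)$, which is $o(N)$ because our hypothesis forces $|E|^2\gg q^{d+1}$. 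Therefore some quadruple has all the required properties, which proves the lemma.

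I expect the one genuinely delicate point to be the second paragraph: recognizing that Lemma \ref{salem_edge} applies to the (possibly tiny) level sets $E_b$, and that the first moment $\sum_b|E_b|=\nu_S(E)$ becomes the needed second moment $\sum_b|E_b|^2$ only through Cauchy--Schwarz — tracking the exact power of $q$ across that step is what produces the exponent $\tfrac{3d+1}{4}$ rather than something weaker. (Expanding $N$ directly as a fourfold Fourier sum and then applying the same Cauchy--Schwarz yields an equivalent argument, so this more modular bookkeeping loses nothing.)
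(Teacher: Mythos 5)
Your argument is correct, but it is organized differently from the paper's. The paper does not count quadruples: after one application of Lemma \ref{salem_edge} to $E$ it pigeonholes over $u\in S$ to find a single popular difference $u\neq\pm v$ with $|E_u|=|E\cap(E-u)|\geq |E|^2/(4q^d)$ (this is exactly where the exponent $\tfrac{3d+1}{4}$ arises, since $|E_u|$ must clear the threshold $q^{\frac{d+1}{2}}(\log q)^{\gamma}$ of Lemma \ref{salem_edge}), applies Lemma \ref{salem_edge} once more inside $E_u$, and discards the at most $8|E_u|$ pairs whose difference lies in $\{\pm u,\pm v,\pm u\pm v\}$; the surviving $S$-edge of $E_u$ lifts to the desired quadruple. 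You instead lower-bound the total quadruple count $N=\sum_{b\in S}\nu_S(E_b)$ via the fiberwise decomposition of Lemma \ref{salem_edge} together with Cauchy--Schwarz ($\sum_{b\in S}|E_b|^2\geq \nu_S(E)^2/|S|$), obtaining $N\gtrsim |E|^4/q^{d+2}$, and then subtract the $O(\nu_S(E))=O(|E|^2/q)$ degenerate or $\pm v$-realizing quadruples. Both routes rest on the same key lemma and yield the same exponent; yours is slightly longer but more informative, since it shows the quadruple count attains the statistically expected order $|E|^4/q^{d+2}$ rather than merely that one good quadruple exists, while the paper's pigeonhole version avoids the second moment and the global bad-quadruple bookkeeping. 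Two small remarks: your use of Lemma \ref{salem_edge} on arbitrary (possibly tiny) fibers $E_b$ is legitimate, because its asymptotic with error $\lesssim q^{\frac{d-1}{2}}(\log q)^{\gamma}|E|$ carries no size hypothesis; and the inequality $\tfrac{3d+1}{4}\geq\tfrac{d+1}{2}$ absorbs the mismatch in logarithmic powers only for $d\geq 2$, but the paper invokes the identical reduction, so this is a shared caveat rather than a gap in your write-up.
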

\v

\begin{proof}

Observe that less than half of the pairs $(x,y)\in E^2$ satisfying $x-y\in S$ happen to also satisfy $x-y=\pm v$.  To see this,
\v

$$
|\{(x,y)\in E^2: x-y=\pm v\}|\leq 2|E|<K\frac{|E|^2}{4q}<\frac{1}{2}|\{(x,y)\in E^2: x-y\in S\}|.
$$
\v

\n Since $E\geq cq^{\frac{3d+1}{4}}(\log{q})^{\gamma/2}\geq cq^{\frac{d+1}{2}}(\log{q})^{\gamma}$, it follows from Lemma \ref{salem_edge} that 
\v

$$
\sum_{x,y\in \mathbb{F}_q^d}E(x)E(y)S(x-y)\geq K\frac{|E|^2}{2q},
$$
\v

\n and therefore by the pigeonhole principle there must exist some $u\in S$, $u\neq \pm v$ so that 
\v

$$
|E\cap (E-u)|=\sum_{\substack{x,y\in \mathbb{F}_q^d \\ x-y=u}}E(x)E(y)\geq \frac{|E|^2}{4q^d}
$$
\v

\n Let $E_u=E\cap (E-u)$, and since 
\v

$$
|E_u|\geq \frac{|E|^2}{4q^d}\geq \frac{1}{4}c^2q^{\frac{d+1}{2}}(\log{q})^{\gamma},
$$
\v

\n we can apply Lemma \ref{salem_edge} once again to find that
\v

$$
\sum_{x,y\in \mathbb{F}_q^2}E_u(x)E_u(y)S(x-y)\geq K\frac{|E_u|^2}{2q}.
$$
\v

\n Therefore, since
$$
|\{(x,y)\in E_u^2: x-y\in \{\pm u, \pm v,\pm u\pm v\}\}|
\leq 8|E_u|<K\frac{|E_u|^2}{2q}\leq |\{(x,y)\in E_u^2: x-y\in S\}|,
$$
\v

\n we find that there must exist some pair $(x,y)\in E_u^2$ such that $x-y\in S$ but $x-y\notin\{\pm u,  \pm v\}$, since the second inequality above is strict.  Let $x^1=x+u$, $x^2=y+u$, $x^3=x$, $x^4=y$, and this completes the proof.
\end{proof}
\v

\n In the course of proving Lemma \ref{rhombus}, we have also obtained the following fact:
\v

\begin{corollary}\label{pigeon}
If $|E|\geq cq^{\frac{d+1}{2}}(\log{q})^{\gamma}$, $c$ sufficiently large, it follows that there exists $v\in S$ satisfying
$$
E_v\geq K\frac{|E|^2}{4q^d}.
$$
\end{corollary}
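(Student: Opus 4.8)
The statement is, as the preceding sentence notes, just an isolated step from the proof of Lemma~\ref{rhombus}, so the plan is to extract that step cleanly. First I would invoke Lemma~\ref{salem_edge}: since $S$ inherits the hypotheses of that lemma and $|E|\geq cq^{\frac{d+1}{2}}(\log q)^{\gamma}$ with $c$ sufficiently large, the counting function $\nu_S(E)=\sum_{x,y\in\bF_q^d}E(x)E(y)S(x-y)$ satisfies $\nu_S(E)\geq K\frac{|E|^2}{2q}$ (indeed Lemma~\ref{salem_edge} gives $\nu_S(E)=K\frac{|E|^2}{q}+\cE$ with $|\cE|\lesssim q^{\frac{d-1}{2}}(\log q)^{\gamma}|E|$, and for $c$ large the error is at most half the main term).

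Next I would reorganize this sum according to the difference: for each $v\in S$, the pairs $(x,y)\in E^2$ with $x-y=v$ are in bijection with the points $y$ of $E_v:=E\cap(E-v)$, so that
$$
\nu_S(E)=\sum_{v\in S}|E_v|.
$$
Since $|S|=Kq^{d-1}(1+O(q^{-\alpha}))$, an averaging (pigeonhole) argument produces some $v\in S$ with
$$
|E_v|\ \geq\ \frac{\nu_S(E)}{|S|}\ \geq\ \frac{K|E|^2/(2q)}{Kq^{d-1}(1+O(q^{-\alpha}))}\ \geq\ \frac{|E|^2}{4q^d}
$$
once $q$ is large enough that $(1+O(q^{-\alpha}))^{-1}\geq \tfrac{1}{2}$. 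This gives the claimed bound; the multiplicative constant $K$ in the statement is a matter of routine bookkeeping, since using the sharper estimate $\nu_S(E)\geq(1-o(1))K\frac{|E|^2}{q}$ together with $|S|\leq(1+o(1))Kq^{d-1}$ yields $|E_v|\geq\frac{1-o(1)}{1+o(1)}\cdot\frac{|E|^2}{q^d}$, and in every application $K$ is a small absolute constant (in our examples $K\in\{1,2\}$), so this is at least $K\frac{|E|^2}{4q^d}$ for $q$ large.

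\textbf{Main obstacle.} There is essentially none: the content of the corollary is precisely the pigeonholing already carried out inside the proof of Lemma~\ref{rhombus}, and the only point requiring any care is tracking the constants coming from the $O(q^{-\alpha})$ error in $|S|$ and from the lower bound on $\nu_S(E)$ (which is where the hypothesis ``$c$ sufficiently large'' is used). Either way one obtains a bound of the shape $|E_v|\gtrsim |E|^2/q^d$, which is all that is needed downstream.
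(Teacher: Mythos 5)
Your proof is correct and is essentially the paper's own argument: the paper justifies this corollary simply by pointing back to the pigeonhole step inside the proof of Lemma~\ref{rhombus}, which is exactly the computation you carry out (Lemma~\ref{salem_edge} gives $\nu_S(E)\geq K\frac{|E|^2}{2q}$, then averaging $\nu_S(E)=\sum_{v\in S}|E_v|$ over $|S|=Kq^{d-1}(1+O(q^{-\alpha}))$ choices of $v$). Your remark about the leading constant is also on point: since the factors of $K$ cancel in the pigeonhole, what the argument really yields (and what is stated inside the proof of Lemma~\ref{rhombus} and used afterwards) is $|E_v|\geq \frac{|E|^2}{4q^d}$ without the $K$, so the extra $K$ in the corollary's statement is exactly the bookkeeping slip you identified, harmless because $K$ is a small constant in all of the paper's applications.
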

\v

\n This fact, along with Lemma \ref{rhombus}, allows us to construct a cube graph which contains most of the features necessary for shattering 3 points.  If $|E|\geq Cq^{\frac{7d+1}{8}}(\log{q})^{\gamma/4}$, then
\v

$$
|E_v|\geq \frac{|E|^2}{4q^d}\geq \frac{1}{4}c^2q^{\frac{3d+1}{4}}(\log{q})^{\gamma/2},
$$
\v

\n and so we can apply Lemma \ref{rhombus} to the set $E_v$ instead of $E$, using the vector $v$ chosen by Corollary \ref{pigeon}.  
\v

\begin{definition}
Let $Q_r$ be the $r$-dimensional cube graph, i.e. the graph with vertex set $V=\{0,1\}^r$, with an edge
$$
(a_1,\dots,a_r)\sim (b_1,\dots,b_r)
$$
if and only if $a_i\neq b_i$ for exactly one index $i$.
    
\end{definition}
\v

\begin{corollary}\label{cube}
Suppose $S$ is symmetric, i.e. $S=-S$, so that $\mathcal{G}_S(E)$ is an undirected graph.  If $E\subseteq \mathbb{F}_q^d$, $|E|\geq cq^{\frac{7d+1}{8}}(\log{q})^{\gamma/4}$, $c$ sufficiently large, then there is a subgraph of $\mathcal{G}_S(E)$ isomorphic to the graph obtained from $Q_3$ by deleting one vertex.
\end{corollary}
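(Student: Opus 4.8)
The plan is to realize the $3$-cube concretely, as the Cartesian product of a $4$-cycle in $\mathcal{G}_S(E)$ with a single edge, the edge being a carefully chosen vector $v\in S$. Since $|E|\ge cq^{\frac{7d+1}{8}}(\log q)^{\gamma/4}\ge cq^{\frac{d+1}{2}}(\log q)^{\gamma}$ for $q$ large, Corollary~\ref{pigeon} produces $v\in S$ with $|E_v|\ge K\frac{|E|^2}{4q^d}$, where $E_v=E\cap(E-v)$; a harmless adjustment of the pigeonhole (the $u=0$ term contributes only $|E|=o(\nu_S(E))$) lets me take $v\neq 0$. Feeding the hypothesis on $|E|$ into this bound gives $|E_v|\ge \frac{K}{4}c^2 q^{\frac{3d+1}{4}}(\log q)^{\gamma/2}$, which for $c$ large enough meets the hypothesis of Lemma~\ref{rhombus}. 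Applying Lemma~\ref{rhombus} to $E_v$ with this $v$ then yields distinct $x^1,x^2,x^3,x^4\in E_v$ with $a:=x^1-x^2=x^3-x^4\in S$, $b:=x^1-x^3=x^2-x^4\in S$, and $x^i-x^j\neq\pm v$ for all $i,j$.

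Next I would take the eight points $\{x^1,x^2,x^3,x^4\}\cup\{x^1+v,x^2+v,x^3+v,x^4+v\}$. All eight lie in $E$, since the $x^i$ lie in $E_v\subseteq E$ and $x^i\in E_v\subseteq E-v$ forces $x^i+v\in E$. Using $S=-S$, the relations $a=x^1-x^2=x^3-x^4\in S$ and $b=x^1-x^3=x^2-x^4\in S$ make $x^1\sim x^2\sim x^4\sim x^3\sim x^1$ a $4$-cycle in $\mathcal{G}_S(E)$, and the same holds for its translate by $v$; since $v\in S$ we also obtain the four edges $x^i\sim x^i+v$. Under the obvious identification of $\{x^1,\dots,x^4\}$ and $\{x^1+v,\dots,x^4+v\}$ with two opposite faces of the cube, these $12$ edges are exactly the edge set of $Q_3$; any additional edges of $\mathcal{G}_S(E)$ among these points are irrelevant, since a (not necessarily induced) subgraph is all that is needed. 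Deleting any one of the eight vertices then gives a subgraph isomorphic to $Q_3$ with one vertex removed, which is the claim --- in fact the argument produces the full cube, slightly more than asserted.

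The only place where the hypotheses are genuinely used is in verifying that the eight points are \emph{distinct}. The $x^i$ are distinct by Lemma~\ref{rhombus}, so the $x^i+v$ are distinct as well; and an equality $x^i=x^j+v$ would force $x^i-x^j=v$, which is impossible for $i=j$ (since $v\neq 0$) and excluded for $i\neq j$ by the clause $x^i-x^j\neq\pm v$. This is exactly the purpose of that clause in Lemma~\ref{rhombus}, and it simultaneously rules out any collapse among the differences $\pm a,\pm b,\pm(a+b),\pm(a-b)$ occurring within the two faces. Thus the main, and essentially only, obstacle is this distinctness bookkeeping; the rest is a direct unwinding of Corollary~\ref{pigeon} and Lemma~\ref{rhombus}.
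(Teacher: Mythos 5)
Your proposal is correct and follows essentially the same route as the paper: apply Corollary \ref{pigeon} to find $v\in S$ with $|E_v|\gtrsim |E|^2/q^d\geq \frac14 c^2 q^{\frac{3d+1}{4}}(\log q)^{\gamma/2}$, run Lemma \ref{rhombus} inside $E_v$ with that fixed $v$, and read off the eight points $x^i$, $x^i+v$ as in Figure \ref{cube_graph}, with the clause $x^i-x^j\neq\pm v$ guaranteeing distinctness. Your explicit remarks that the construction yields the full $Q_3$ and that the pigeonhole should be adjusted to ensure $v\neq 0$ are welcome (the paper leaves the latter implicit), but they do not change the argument.
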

\v

\begin{proof}
As mentioned, we combine Lemma \ref{rhombus} and Corollary \ref{pigeon}.  With $v,x^1,x^2,x^3,x^4$ defined as above, the proof is the picture in Figure \ref{cube_graph}.  The fact that Lemma \ref{rhombus} ensures $x^i-x^j$ is not equal to $\pm v$ for any $(i,j)$, and that $u\neq \pm v$, ensures that all the vertices are actually distinct.
    
\end{proof}

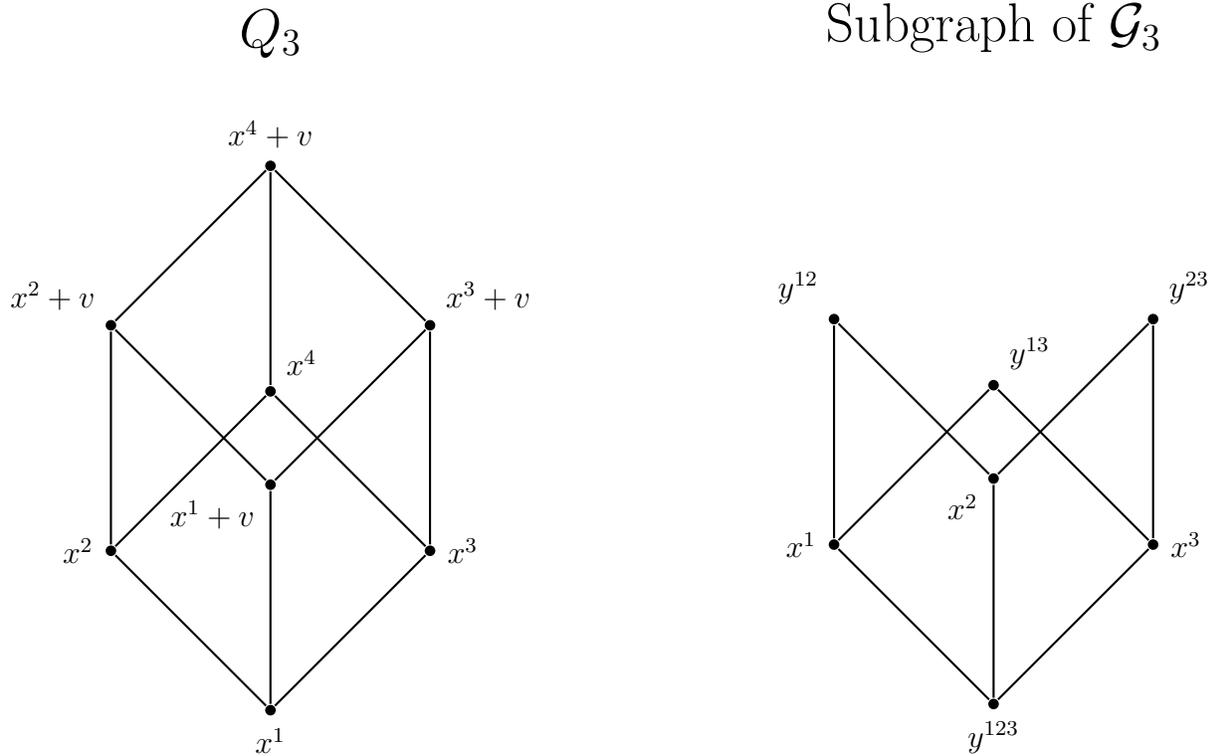
\begin{figure}[h!]\label{cube_graph}
\centering

\begin{minipage}[t]{.4\textwidth}
\centering
\begin{tikzpicture}[scale=3, thick]
  \node[fill=black, circle, minimum size=4pt, inner sep=0pt, label=below:$x^1$] (y123) at (0,0) {};
  \node[fill=black, circle, minimum size=4pt, inner sep=0pt, label=below left:$x^1+v$] (x2) at (0,1) {};
  \node[fill=black, circle, minimum size=4pt, inner sep=0pt, label=left:$x^2$] (x1) at (-.707,.707) {};
  \node[fill=black, circle, minimum size=4pt, inner sep=0pt, label= right:$x^3$] (x3) at (.707,.707) {};
  \node[fill=black, circle, minimum size=4pt, inner sep=0pt, label=above left:$x^2+v$] (y12) at (-.707,1.707) {};
  \node[fill=black, circle, minimum size=4pt, inner sep=0pt, label=above right:$x^3+v$] (y23) at (.707,1.707) {};
  \node[fill=black, circle, minimum size=4pt, inner sep=0pt, label=above right:$x^4$] (y13) at (0,1.414) {};
  \node[fill=black, circle, minimum size=4pt, inner sep=0pt, label=above:$x^4+v$] (z) at (0,2.414) {};
  
  \node at (0,3) {\LARGE $Q_3$};

  \draw (y123) -- (x1);
  \draw (y123) -- (x2);
  \draw (y123) -- (x3);
  \draw (y12) -- (x1);
  \draw (y12) -- (x2);
    \draw (y13) -- (x1);
    \draw (y13) -- (x3);
    \draw (y23) -- (x2);
    \draw (y23) -- (x3);
    \draw (z) -- (y13);
    \draw (z) -- (y12);
    \draw (z) -- (y23);
  
\end{tikzpicture}

\end{minipage}
\hfill
\begin{minipage}[t]{.4\textwidth}
\centering
\begin{tikzpicture}[scale=3, thick]
  \node[fill=black, circle, minimum size=4pt, inner sep=0pt, label=below:$y^{123}$] (y123) at (0,0) {};
  \node[fill=black, circle, minimum size=4pt, inner sep=0pt, label=below left:$x^2$] (x2) at (0,1) {};
  \node[fill=black, circle, minimum size=4pt, inner sep=0pt, label=left:$x^1$] (x1) at (-.707,.707) {};
  \node[fill=black, circle, minimum size=4pt, inner sep=0pt, label= right:$x^3$] (x3) at (.707,.707) {};
  \node[fill=black, circle, minimum size=4pt, inner sep=0pt, label=above left:$y^{12}$] (y12) at (-.707,1.707) {};
  \node[fill=black, circle, minimum size=4pt, inner sep=0pt, label=above right:$y^{23}$] (y23) at (.707,1.707) {};
  \node[fill=black, circle, minimum size=4pt, inner sep=0pt, label=above right:$y^{13}$] (y13) at (0,1.414) {};

  \node at (0,3) {\LARGE Subgraph of $\mathcal{G}_3$};

  \draw (y123) -- (x1);
  \draw (y123) -- (x2);
  \draw (y123) -- (x3);
  \draw (y12) -- (x1);
  \draw (y12) -- (x2);
    \draw (y13) -- (x1);
    \draw (y13) -- (x3);
    \draw (y23) -- (x2);
    \draw (y23) -- (x3);

\end{tikzpicture}

\end{minipage}

\caption{Left: The cube graph $Q_3$, as constructed by Lemma \ref{rhombus} and Corollary \ref{cube}. Right: Re-labeling the vertices to agree with Figure \ref{vc3_graph}, and deleting the vertex $x^4+v$ which is unnecessary for the proof of Theorem \ref{shatter3}.}
\end{figure}
\v

\n We are almost ready to prove Theorem \ref{shatter3}, but first we need Lemma \ref{functional}, which will allow us to estimate the number of triples $(x,y,z)\in E^3$ with $x-y,z-y\in S$, as well as Lemma \ref{pruning}, which will allow us to assume without loss of generality that each of the vertices constructed above has a positive proportion of the average number of neighbors for vertices in $\mathcal{G}_S(E)$.
\v

\begin{lemma}\label{functional}
Let $f,g:\mathbb{F}_q^d\to \mathbb{R}^{\geq 0}$, and let $S\subseteq \mathbb{F}_q^d$ satisfy $|\hat{S}(m)|\lesssim q^{-\frac{d+1}{2}}(\log{q})^{\gamma}$ for $m\neq 0$, and $|S|=Kq^{d-1}(1+O(q^{-\alpha}))$.  It follows that
\v

$$
\sum_{x,y\in \mathbb{F}_q^d}f(x)g(y)S(x-y)
=\frac{K}{q}||f||_{L^1}||g||_{L^1}(1+O(q^{-\alpha}))+O\left(q^{\frac{d-1}{2}}(\log{q})^{\gamma}||f||_{L^2}||g||_{L^2}\right).
$$
    
\end{lemma}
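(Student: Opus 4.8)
The plan is to run exactly the Fourier-analytic computation from the proof of Lemma~\ref{salem_edge}, observing that that argument never used that the indicator $E$ was $\{0,1\}$-valued; it applies verbatim to arbitrary functions $f,g$, and nonnegativity is not even essential (it is merely convenient for the applications). So this lemma is really just the statement of Lemma~\ref{salem_edge} with $E$ replaced by general $f$ and $g$.

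Concretely, I would proceed in three steps. \textbf{Step 1 (Fourier expansion).} Write $S(x-y)=\sum_{m}\chi(m\cdot(x-y))\hat S(m)$ via Fourier inversion and interchange the order of summation; using $\sum_x\chi(m\cdot x)f(x)=q^d\hat f(-m)$ and $\sum_y\chi(-m\cdot y)g(y)=q^d\hat g(m)$, this gives the identity
$$
\sum_{x,y\in\mathbb F_q^d}f(x)g(y)S(x-y)=q^{2d}\sum_{m\in\mathbb F_q^d}\hat f(-m)\,\hat g(m)\,\hat S(m).
$$
\textbf{Step 2 (main term).} Isolate $m=0$: since $\hat f(0)=q^{-d}\|f\|_{L^1}$, $\hat g(0)=q^{-d}\|g\|_{L^1}$, and $\hat S(0)=q^{-d}|S|=Kq^{-1}(1+O(q^{-\alpha}))$, the $m=0$ contribution is precisely $\tfrac{K}{q}\|f\|_{L^1}\|g\|_{L^1}(1+O(q^{-\alpha}))$. \textbf{Step 3 (error term).} For $m\neq0$ apply the hypothesis $|\hat S(m)|\lesssim q^{-(d+1)/2}(\log q)^\gamma$, pull that factor out, and estimate the remaining sum by Cauchy--Schwarz in $m$ together with the fact that $m\mapsto-m$ is a bijection, followed by Plancherel:
$$
\sum_{m\neq0}|\hat f(-m)|\,|\hat g(m)|\le\Big(\sum_m|\hat f(m)|^2\Big)^{1/2}\Big(\sum_m|\hat g(m)|^2\Big)^{1/2}=q^{-d}\|f\|_{L^2}\|g\|_{L^2}.
$$
Multiplying through by $q^{2d}\cdot q^{-(d+1)/2}(\log q)^\gamma$ yields the claimed bound $O\big(q^{(d-1)/2}(\log q)^\gamma\|f\|_{L^2}\|g\|_{L^2}\big)$.

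There is essentially no obstacle here; the only thing requiring care is the bookkeeping of the $q^d$ normalization factors in Steps~1 and~3 under the conventions fixed in the preliminaries. I would also note, exactly as in Lemma~\ref{salem_edge}, that the $O(q^{-\alpha})$ correction attached to the main term is negligible against the error term in every situation where the lemma is invoked, so it can be absorbed there if desired.
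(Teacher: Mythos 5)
Your proof is correct and is essentially the paper's own argument for Lemma \ref{functional}: expand $S(x-y)$ by Fourier inversion to get $q^{2d}\sum_m \overline{\hat f(m)}\hat g(m)\hat S(m)$ (your $\hat f(-m)$ is the same thing for real-valued $f$), extract the $m=0$ term via $\hat S(0)=q^{-d}|S|$, and control the $m\neq 0$ terms with the pointwise Salem bound followed by Cauchy--Schwarz and Plancherel, exactly as in the paper. The only caveat is your parenthetical that nonnegativity is inessential: the main term is written with $\|f\|_{L^1}=\sum_x|f(x)|$, which equals $q^{d}\hat f(0)=\sum_x f(x)$ only when $f\ge 0$, though this does not matter under the lemma's stated hypotheses.
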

\v

\begin{proof}
$$
\sum_{x,y\in \mathbb{F}_q^d}f(x)g(y)S(x-y)
=\sum_{x,y,m\in \mathbb{F}_q^d}f(x)g(y)\chi(m\cdot (x-y))\hat{S}(m)
=q^{2d}\sum_m\overline{\hat{f}(m)}\hat{g}(m)\hat{S}(m)
$$
$$
=\frac{K}{q}||f||_{L^1}||g||_{L^1}(1+O(q^{-\alpha}))+\mathcal{R},
$$
where
\v

$$
\mathcal{R}=q^{2d}\sum_{m\neq 0}\overline{\hat{f}(m)}\hat{g}(m)\hat{S}(m)
\lesssim q^{\frac{3d-1}{2}}(\log{q})^{\gamma}\sum_{m\in \mathbb{F}_q^2}|\hat{f}(m)||\hat{g}(m)|
\leq q^{\frac{3d-1}{2}}(\log{q})^{\gamma}||\hat{f}||_{L^2}||\hat{g}||_{L^2}
$$
\v

$$
=q^{\frac{d-1}{2}}(\log{q})^{\gamma}||f||_{L^2}||g||_{L^2}
$$
    
\end{proof}
\v

\begin{lemma}\label{pruning}

Let $E\subseteq \mathbb{F}_q^d$, $|E|\geq cq^{\frac{d+1}{2}}(\log{q})^{\gamma}$ for $c$ sufficiently large.  Suppose that $S$ satisfies the conditions of Lemma \ref{salem_edge}, i.e. $|\hat{S}(m)|\lesssim q^{-\frac{d+1}{2}}(\log{q})^{\gamma}$ for $m\neq 0$ and $|S|=Kq^{d-1}(1+O(q^{-\alpha}))$.  Then whenever $0<M\lesssim q^{\beta}$, $\beta<\frac{d-1}{2}$, there exists a subset $E_M\subseteq E$, $|E_M|\gtrsim |E|$, so that 
$$
E\ast S(x)> M
$$
for all $x\in E_M$.

\end{lemma}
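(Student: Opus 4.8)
The plan is to take $E_M := \{x \in E : (E \ast S)(x) > M\}$, the set of high-degree vertices, and to show that its complement $B := E \setminus E_M$ inside $E$ satisfies $|B| \le \tfrac12 |E|$ once $c$ is large; this immediately gives $|E_M| = |E| - |B| \ge \tfrac12 |E| \gtrsim |E|$. Everything rests on estimating $\sum_{x \in B}(E\ast S)(x)$ in two ways. First, by the definition of $B$ we have the trivial bound $\sum_{x\in B}(E\ast S)(x) \le M|B|$. Second, since the hypotheses of this lemma are exactly those of Lemma~\ref{functional}, I would apply that lemma with $f = \mathbf 1_B$ and $g = \mathbf 1_E$ (so that $\|\mathbf 1_B\|_{L^1} = |B|$, $\|\mathbf 1_B\|_{L^2} = |B|^{1/2}$, and similarly for $E$), obtaining
$$
\sum_{x\in B}(E\ast S)(x) = \frac{K}{q}|B||E|\bigl(1 + O(q^{-\alpha})\bigr) + O\!\left(q^{\frac{d-1}{2}}(\log q)^{\gamma}\,|B|^{1/2}|E|^{1/2}\right).
$$

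Next I would combine the two, assume $|B|>0$ (otherwise there is nothing to prove), divide through by $|B|$, and use $1 + O(q^{-\alpha}) \ge \tfrac12$ for $q$ large. I would then observe that $M$ is negligible against the average degree: since $M \lesssim q^{\beta}$ with $\beta < \tfrac{d-1}{2}$ while $\tfrac Kq |E| \gtrsim q^{\frac{d-1}{2}}(\log q)^{\gamma}$, for $q$ large we have $M \le \tfrac{K}{4q}|E|$, which leaves
$$
\frac{K}{4q}|E| \;\le\; C\,q^{\frac{d-1}{2}}(\log q)^{\gamma}\Bigl(\tfrac{|E|}{|B|}\Bigr)^{1/2}.
$$
Squaring and rearranging yields $|B| \lesssim q^{d+1}(\log q)^{2\gamma}/|E|$, and substituting the hypothesis $|E| \ge cq^{\frac{d+1}{2}}(\log q)^{\gamma}$ twice gives $|B| \lesssim c^{-2}|E|$. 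Choosing $c$ large enough forces $|B| \le \tfrac12|E|$, which completes the argument.

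I do not expect a genuine obstacle here; the one point that needs care is recognizing that it is the \emph{error} term in Lemma~\ref{functional}, not the main term, that ends up controlling $|B|$, and that this error term is smaller than the main term $\tfrac Kq|B||E|$ precisely in the range $|E| \gtrsim q^{\frac{d+1}{2}}(\log q)^{\gamma}$ assumed in the lemma. The restriction $M \lesssim q^{\beta}$, $\beta < \tfrac{d-1}{2}$, is used only to guarantee that $M$ cannot compete with the average degree $\tfrac Kq|E|$; note in particular that the resulting bound $|E_M| \ge \tfrac12|E|$ has an absolute implied constant, uniform over the admissible range of $M$.
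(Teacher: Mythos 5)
Your proof is correct, and it takes a genuinely different route from the paper. You run a first-moment argument on the bad set $B=E\setminus E_M$: bounding $\sum_{x\in B}(E\ast S)(x)$ above by $M|B|$ and below by the main term $\tfrac{K}{q}|B||E|$ of Lemma \ref{functional} applied with $f=\mathbf 1_B$, $g=\mathbf 1_E$, so that the error term $q^{\frac{d-1}{2}}(\log q)^{\gamma}|B|^{1/2}|E|^{1/2}$ can only rescue the inequality when $|B|\lesssim q^{d+1}(\log q)^{2\gamma}/|E|\lesssim c^{-2}|E|$. The paper instead works with the good set: it first uses Lemma \ref{salem_edge} to see that $\sum_{x\in E_M}E\ast S(x)\geq K|E|^2/(4q)$ (since $M$ is too small to carry the mass on $E\setminus E_M$), then applies Cauchy--Schwarz and bounds the second moment $\sum_{x\in E}(E\ast S(x))^2\lesssim |E|^3/q^2$ via a bootstrapped application of Lemma \ref{functional} with $f=E\cdot(E\ast S)$, $g=E$. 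Your argument is shorter and more elementary — no Cauchy--Schwarz, no self-referential choice of $f$, and only indicator functions fed into Lemma \ref{functional}; it does not even need Lemma \ref{salem_edge} as a separate input. What the paper's route buys is the second-moment estimate itself, which is reused verbatim in the proof of Lemma \ref{shatter2} to count triples $(x^1,x^2,y^{12})$ with $x^1-y^{12},x^2-y^{12}\in S$, so the extra work there is not wasted. Both arguments require $q$ (and $c$) large enough that $M\leq \tfrac{K}{4q}|E|$ and that the $1+O(q^{-\alpha})$ factor is bounded below, and both give $|E_M|\geq \tfrac12|E|$ with a constant independent of $M$ in the admissible range, so your closing remark is accurate.
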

\v

\begin{proof}
Summing over $x\in E$, we have
\v

$$
\sum_{x\in E}E\ast S(x)
=\sum_{x,y\in \mathbb{F}_q^d}E(x)E(y)S(x-y)
\geq K\frac{|E|^2}{2q}.
$$
Let
\v

$$
E_M=\left\{x\in E: E\ast S(x)> M\right\}.
$$
\v

\n Then, since $M\leq \frac{K}{4}|E|q^{-1}$ for $q$ sufficiently large, it follows that
\v

$$
\sum_{x\in E}E\ast S(x)\leq M|E\sm E_M|+\sum_{x\in E_M}E\ast S(x)
\leq K\frac{|E|^2}{4q}+\sum_{x\in E_M}E\ast S(x).
$$
\v

\n Therefore,
$$
\sum_{x\in E_M}E\ast S(x)\geq K\frac{|E|^2}{4q}.
$$
\v

\n By Cauchy-Schwarz,
$$
K^2\frac{|E|^4}{16q^2}\leq 
\left(\sum_{x\in E_M}E\ast S(x)\right)^2
\leq |E_M|\sum_{x\in E}(E\ast S(x))^2.
$$
\v

\n We can bound this last sum using Lemma \ref{functional} with $f(x)=E(x)E\ast S(x)$, $g(y)=E(y)$.  We have
\v

$$
\sum_{x\in E}(E\ast S(x))^2
=\sum_{x,y\in \mathbb{F}_q^d}E(x)E\ast S(x)E(y)S(x-y)
=\sum_{x,y}f(x)g(y)S(x-y)
$$
\v

$$
=\frac{K}{q}||f||_{L^1}||g||_{L^1}(1+O(q^{-\alpha}))+O\left(q^{\frac{d-1}{2}}(\log{q})^{\gamma}||f||_{L^2}||g||_{L^2}\right)
$$
\v

\n Clearly, $||g||_{L^1}=|E|$ and $||g||_{L^2}=|E|^{\frac{1}{2}}$.  Moreover,
\v

$$
||f||_{L^1}=\sum_{x}E(x)E\ast S(x)
=\sum_{x,y}E(x)E(y)S(x-y)\lesssim K\frac{|E|^2}{q},
$$
and
$$
||f||_{L^2}^2=\sum_{x\in E}(E\ast S(x))^2
$$
\v

\n is the original sum we were trying to bound.  We see that
\v

$$
\frac{K}{q}||f||_{L^1}||g||_{L^1}(1+O(q^{-\alpha}))\lesssim K\frac{|E|^3}{q^2}.  
$$
\v

\n If this is the larger of the two terms, we are done.  Otherwise, we have
\v

$$
\sum_{x\in E}(E\ast S(x))^2
=||f||_{L^2}^2\lesssim q^{\frac{d-1}{2}}(\log{q})^{\gamma}||f||_{L^2}||g||_{L^2},
$$
\v

\n and so
$$
||f||_{L^2}\lesssim q^{\frac{d-1}{2}}(\log{q})^{\gamma}|E|^{\frac{1}{2}}.
$$
\v

\n Finally, we obtain
\v

$$
\sum_{x\in E}(E\ast S(x))^2\lesssim q^{d-1}(\log{q})^{2\gamma}|E|\lesssim \frac{|E|^3}{q^2},
$$
\v

\n since $|E|\geq cq^{\frac{d+1}{2}}(\log{q})^{\gamma}$.  Putting this all together, we get the desired conclusion that $|E_M|\gtrsim |E|$.

\end{proof}
\v

\n We are now ready to prove Theorem \ref{shatter3}.  
\v

\begin{proof}[Proof of Theorem \ref{shatter3}]

To begin, using the labeling conventions from Figure \ref{cube_graph}, we simply ignore the vertex $x^4+v$, and relabel the rest of the vertices to agree with the labelling of Figure \ref{vc3_graph}. It only remains to find points $y^1,y^2,y^3,y^{\es}$ with the desired connections.  $y^{\es}$ is trivial, since it is not connected to any other vertices of $\mathcal{G}_3$, so we need only find a point in $E$ which is not connected to any of the previously constructed points.  But there are finitely many such points (7 to be precise) and each has $O(q)$ neighbors in $\mathcal{G}_S(E)$.  Since $E\geq cq^{\frac{7d+1}{8}}$, clearly we can pick a vertex which is not connected to any of these 7 points in $\mathcal{G}_S(E)$.  
\\

\n Next we need to find $y^1,y^2,y^3\in E$ so that $x^i-y^j\in S$ if and only if $i=j$.  Let $E_M$ be as in Lemma \ref{pruning}, and note that we may assume without loss of generality that $x^1,x^2,x^3\in E_M$.  This is because $|E_M|\gtrsim |E|$, so $E_M$ satisfies the same assumptions as $E$, and thus all of these constructions apply to $E_M$ as well as $E$.  
\\

\n Thus, we have guaranteed that each of $x^1,x^2,x^3$ have more than $M$ neighbors in $\mathcal{G}_S(E)$.  Therefore we can choose an appropriate $y^1$ as long as $M$ is large enough to ensure that $x^1$ has a neighbor which avoids the 7 points already in the diagram, and also avoids any points which are neighbors of $x^2$ or $x^3$.  But, since $|S\cap (S-x)|\leq q^{\beta}$ for all $x\in \mathbb{F}_q^2$, there are only $7+2q^{\beta}$ points which must be avoided.  Put $M=7+2q^{\beta}\lesssim q^{\beta}$, and thus we can find $y^1,y^2,y^3$ with the desired connections in $\mathcal{G}_S(E)$.  This completes the proof.

\end{proof}
\v

\n In addition, we now have the Fourier analytic machinery needed to prove Lemma \ref{shatter2}.
\v

\begin{proof}[Proof of Lemma \ref{shatter2}]

We need to find
\v

$$
\{x^1,x^2\}\cup\{y^{12},y^{1},y^2,y^{\es}\}\subseteq E
$$
\v

\n such that $x^i\in y^I$ if and only if $i\in I$.  It is trivial to find $y^{\es}$ once the other points are constructed, since we have to avoid $O(q)$ points which is significantly less than the size of $E$.  Therefore, we really just need to achieve the rest of the configuration as displayed in Figure \ref{path4}.
\v

\n In the proof of Lemma \ref{pruning}, we use Lemma \ref{functional} to find that 
\v

$$
\sum_{x\in E}(E\ast S(x))^2\lesssim \frac{|E|^3}{q^2}
$$
\v

\n as long as $|E|\geq cq^{\frac{d+1}{2}}(\log{q})^{\gamma}$ for $c$ sufficiently large.  In fact, that same calculation yields the more precise estimate
\v

$$
\sum_{x\in E}(E\ast S(x))^2=K^2\frac{|E|^3}{q^2}(1+O(q^{-\alpha}))+O(q^{d-1}(\log{q})^{2\gamma}|E|),
$$
\v

\n where $O(q^{d-1}(\log{q})^{2\gamma}|E|)$ is a small error term.  But the sum $\sum_{x\in E}(E\ast S(x))^2$ is counting the number of tuples $(x^1,x^2,y^{12})\in E^3$ with $x^1-y^{12},x^2-y^{12}\in S$.  Therefore, we may apply Lemma \ref{pruning} once again to ensure that $(x^1,x^2,y^{12})\in E_M\subseteq E$, $M>q^{\beta}$.  This means that the number of points $y^1\in E$ with $x^1-y^1\in S$ is greater than the number of points in the intersection
$$
|(x^1-S)\cap (x^2-S)|\leq q^{\beta},
$$
and therefore we may choose a point $y^1\in E$ satisfying the shattering configuration property that $x^1-y^1\in S$ and $x^1-y^2\notin S$.  We may similarly choose an appropriate $y^2$, and this completes the proof of the Lemma.

\end{proof}
\v

\section{Symmetrized Parabola}
\v

\n When first considering the question of how the results of \cite{FIMW23} generalize to $\mathcal{H}_S(E)$ where $S$ is a general quadratic curve as opposed to a circle, it did not take long to realize that the argument does not apply to a parabola because of a lack of symmetry.  In attempts to fix this by considering a symmetrized parabola, we then lose the necessary intersection properties, since a symmetrized parabola may intersect a translation of itself up to 6 times over a finite field (only 4 times in Euclidean space), which is more than the 2 points of intersection obtained by translating a circle.  In this section we demonstrate how the situation over finite fields is fundamentally different from Euclidean space, in contrast with the situation for circles and spheres, in which the VC-dimension is the same in $\mathbb{F}_q^d$ and in $\mathbb{R}^d$.  We will show that one can only shatter 3 points with the symmetrized parabola over Euclidean space, while it is possible to shatter 4 points over (some) finite fields.  
\v

\n This leads to a very natural follow-up question: In \cite{IMMM25}, the VC-dimension of a hypothesis class of spheres in fractal sets in $\mathbb{R}^d$ is studied.  In particular, they show that whenever the Hausdorff dimension of $E\subseteq \mathbb{R}^d$, $d\geq 3$, is above a certain threshold, it follows that $\text{VCdim}(\mathcal{H}_{S_t}(E))\geq 3$, where $\mathcal{H}_{S_t}(E)$ is the hypothesis class defined by translations of the sphere $S_t$ of radius $t$ intersected with $E$.  The methods do not provide a result in $d=2$, and indeed there is a fundamental obstruction due to Maga's construction \cite{M10} of full-dimensional sets avoiding parallelograms.  There may be no hope to extend the results of \cite{IMMM25} to $d=2$, but what about for the symmetrized parabola, or other curves besides the circle?  It is plausible that one could obtain a positive result in this case, but simply applying the techniques from \cite{IMMM25} is not sufficient.  For now we leave this as an open problem.
\v

\n Here we will discuss the VC-dimension of a hypothesis class defined by a symmetrized parabola, and compare the finite field situation with Euclidean space.
\v

\begin{definition}
Let $P=P^+\cup P^-$ be the symmetrized parabola, where $P^+=\{(x,x^2)\}$ and $P^-=-P^+$.  Let $L,R$ be the left and right halves of the symmetrized parabola respectively, and let
\v

$$
L^+=P^+\cap L, \ \ \ \ \  L^-=P^-\cap L, \ \ \ \ \ R^+=P^+\cap R, \ \ \ \ \ R^{-}=P^-\cap R
$$

\end{definition}
\v

\begin{remark}
The definition for $P^+$ and $P^-$ can be interpreted over any ring, but we will consider the cases $\mathbb{R}^2$ and $\mathbb{F}_q^2$ here.  The definition for $L,R,L^{\pm},R^{\pm}$ can be interpreted over any ordered ring.  In particular this excludes $\mathbb{F}_q$.
\v
   
\end{remark}
\v

\subsection{Euclidean space}
\v

\begin{definition}
\n Consider the symmetrized parabola in Euclidean space.  Let $\mathcal{H}_P(\R^2)$ denote the hypothesis class consisting of indicator functions $h_y$ of sets of the form $y+P$ for $y\in \mathbb{R}^2$.
    
\end{definition}
\v

\begin{proposition}\label{shatter4}
$\mathcal{H}_P(\R^2)$ has VC-dimension equal to 3.

\end{proposition}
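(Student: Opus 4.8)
The plan is to establish the VC-dimension of $\mathcal{H}_P(\mathbb{R}^2)$ in two halves: first show it is at least $3$ by exhibiting an explicit shattered triple, then show no $4$-point set can be shattered. For the lower bound, since shattering is monotone, it suffices to produce three points $x^1,x^2,x^3\in\mathbb{R}^2$ together with eight centers $y^I$ ($I\subseteq\{1,2,3\}$) such that $x^i-y^I\in P$ exactly when $i\in I$. I would exploit the same ``symmetrized rhombus/cube'' picture used in the finite field case (Figure \ref{cube_graph}): place the three $x^i$ at the vertices of a small generic triangle and try to realize each of the eight incidence patterns by choosing appropriate translates of $P$. Concretely, the condition $x^i - y \in P$ means $y$ lies on the translate $x^i - P$ (a downward-and-upward-opening union of two parabolas), so shattering amounts to showing that the Venn diagram of the three sets $x^i - P$ in $\mathbb{R}^2$ has all $8$ regions nonempty; this is a transversality/general-position argument that can be made to work because two translates of $P$ meet in only finitely many points (at most $4$ in $\mathbb{R}^2$, per the discussion preceding the proposition), so for generic choices of the $x^i$ the three boundary curves are in general position and carve out all $2^3$ cells.

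For the upper bound I must rule out shattering of any four points $x^1,x^2,x^3,x^4$. Suppose such a configuration existed, with centers $y^I$ for all $I\subseteq\{1,2,3,4\}$. The key pigeonhole is the one already used in Corollary \ref{VC3}: the centers $y^{1234}$ and $y^{123}$ both have $x^1,x^2,x^3$ in their corresponding translate of $P$, so
$$
\{x^1,x^2,x^3\}\subseteq (y^{1234}+P)\cap(y^{123}+P).
$$
If two distinct translates of $P$ could only meet in at most $2$ points we would be done immediately, but in $\mathbb{R}^2$ a symmetrized parabola can meet a translate of itself in up to $4$ points, so this crude bound is not enough. The refinement is to use the \emph{ordered} structure of $\mathbb{R}^2$: decompose $P = P^+\cup P^-$ into the two parabolic arcs (and further into left/right halves $L^{\pm},R^{\pm}$ as in the Definition above), and observe that $y + P^+$ is the graph of a \emph{convex} function while $y + P^-$ is the graph of a \emph{concave} function. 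A convex graph and a concave graph meet in at most $2$ points, and two translates of the same convex graph meet in at most... one point (their difference is linear), so by a careful case analysis on which arc each $x^i$ lands on, one can show that four points forcing all $2^4$ incidence patterns is impossible.

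I expect the main obstacle to be precisely this upper-bound case analysis: organizing the combinatorics of ``which of the four arcs $L^+,L^-,R^+,R^-$ does $x^i-y^I$ belong to'' across enough of the sixteen subsets $I$ to derive a contradiction, while keeping track of the convexity/concavity constraints. A clean way to package it is to note that for any center $y$, the set $y+P$ intersected with a vertical line has at most $2$ points, and intersected with any translate $y'+P$ has a controlled structure; then the requirement that, e.g., $\{x^1,x^2,x^3,x^4\}\subseteq y^{1234}+P$ while all $\binom{4}{3}$ triples are \emph{also} jointly contained in the respective $y^{I}+P$ with $|I|=3$ over-determines the positions. The lower-bound half is comparatively routine once general position is set up correctly, so the write-up should spend most of its effort on the impossibility of shattering four points, likely by reducing to the statement that the union of an upward parabola and a downward parabola (a ``curve of degree $2$ in a suitable sense'') cannot support such rich incidence patterns — essentially a convexity argument rather than an algebraic one.
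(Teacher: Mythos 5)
Your overall strategy for the upper bound is the same as the paper's (decompose $P$ into the arcs $L^{\pm},R^{\pm}$, exploit that two translates of $P^+$ meet in at most one point and that an upward and a downward parabola meet in at most two, then do casework on which arc each $x^i$ occupies), but the proposal stops exactly where the proof actually lives. The statement ``by a careful case analysis \dots one can show that four points forcing all $2^4$ incidence patterns is impossible'' is the entire content of the paper's argument: it occupies Lemmas \ref{left_unique}--\ref{dragon}, including a nontrivial uniqueness lemma for translates containing a point of $L^+$ and a point of $L^-$ (Lemma \ref{left_unique}), a two-solution bound for translates of $L^-\cup R^+$ through two prescribed points (Lemma \ref{s_unique}), the reduction of the configuration to $x^1,x^2\in L^+$, $x^3\in L^-$, $x^4\in R^-$ via five separate cases (Lemma \ref{main_case}), and a final geometric argument comparing the horizontal positions of the vertices of $y^{134}+P$ and $y^{234}+P$ (Lemma \ref{dragon}). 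None of these steps is routine, and your convexity observations alone do not produce the contradiction; for instance, the crude counts you cite cannot rule out Case 4 or Case 5 of Lemma \ref{main_case} without the finer Lemmas \ref{left_unique} and \ref{s_unique}. So the upper bound in your write-up is a plan, not a proof.

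The lower-bound sketch also has a genuine flaw. Shattering $\{x^1,x^2,x^3\}$ requires in particular a center $y^{123}$ lying on all three curves $x^i-P$ simultaneously, and three curves in \emph{general} position in the plane have no common point: genericity works against you for the cell $I=\{1,2,3\}$, rather than for it. (The sets $x^i-P$ are one-dimensional curves, not regions, so the ``all eight Venn cells are nonempty for generic $x^i$'' picture does not apply.) A correct construction must be deliberately non-generic, e.g.\ first fix $y^{123}$ and choose $x^1,x^2,x^3\in y^{123}+P$, then verify the pairwise and singleton cells by hand using the intersection bounds; as written, your transversality argument would fail at the very first cell.
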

\v

\n First we will show that no set of 4 points, $\{x^1,x^2,x^3,x^4\}$, is shattered by $\mathcal{H}_P(\R^2)$.  For sake of contradiction, assume that we have a shattering configuration
\v

$$
\{x^1,x^2,x^3,x^4\}\cup \{y^I: I\subseteq[4]\}
$$
\v

\n such that $x^i\in y^I$ if and only if $i\in I$. Without loss of generality, assume $y^{1234}=0$.  We observe that there are several cases based on the position of the points $x^1,\dots,x^4$ with respect to the 4 quadrants of the plane.  
\v

\begin{lemma}\label{left_unique}
Consider the points $X=(x,x^2)\in L^+$ and $Y=(y,-y^2)\in L^-$, so that $x,y<0$.   
If there exists $P_0 \in \mathbb{R}^2$ such that $X\in P_0+L^+$ and $Y\in P_0+L^-$,
then $P_0 = (0,0)$
\end{lemma}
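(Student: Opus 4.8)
The plan is to translate the two membership conditions into a small system of polynomial equations in auxiliary parameters $t,s<0$, solve it, and use the sign constraints to discard a spurious branch. Concretely, write $P_0=(a,b)$. Then $X\in P_0+L^+$ says $(x,x^2)=(a,b)+(t,t^2)$ for some $t<0$, and $Y\in P_0+L^-$ says $(y,-y^2)=(a,b)+(s,-s^2)$ for some $s<0$. Eliminating $(a,b)$, the existence of such a $P_0$ is equivalent to the solvability, with $t,s<0$, of
$$ x-t=y-s, \qquad x^2-t^2=s^2-y^2. $$

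Next I would solve this system. Set $u:=x-t=y-s$, which is just the first coordinate $a$ of $P_0$, so that $t=x-u$ and $s=y-u$. Substituting these into the second equation and expanding gives $2ux-u^2=u^2-2uy$, hence $u(x+y-u)=0$. If $u=0$, then $t=x$, $s=y$, and $b=x^2-t^2=0$, so $P_0=(0,0)$; one also checks directly that this choice is admissible, since $x,y<0$ puts $X$ and $Y$ in $L^+$ and $L^-$ respectively. If instead $u=x+y$, then $t=x-u=-y$ and $s=y-u=-x$, both of which are positive because $x,y<0$, contradicting $t,s<0$. Therefore $u=0$ and $P_0=(0,0)$, as claimed.

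There is essentially no analytic content here; the only point that needs care — and the reason the lemma is stated for the left halves $L^+,L^-$ rather than for the full curves $P^+,P^-$ — is the closing sign argument. Dropping the constraints $t,s<0$ leaves the genuine second solution $u=x+y$, so it is precisely the requirement that $X$ and $Y$ be realized from the \emph{left} portions of the translated pieces that forces $P_0$ to be the origin. Accordingly I would present the proof in three steps: (i) unwind the hypotheses to the displayed $2\times 2$ system; (ii) reduce it to $u(x+y-u)=0$ via the substitution $u=x-t=y-s$; (iii) eliminate the branch $u=x+y$ using the positivity of $-x$ and $-y$, leaving $P_0=(0,0)$.
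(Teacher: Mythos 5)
Your proof is correct and follows essentially the same route as the paper's: both eliminate the translation $P_0$ and reduce to a quadratic in the parameters with exactly two candidate solutions, discarding the spurious one by the sign constraints $t,s<0$. The paper packages this as a circle--line intersection (at most two points, one with a positive coordinate), while you factor $u(x+y-u)=0$ explicitly and exhibit the excluded branch $(t,s)=(-y,-x)$; the content is the same.
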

\v

\begin{proof}
Let $(a,b)=(y-x,y^2-x^2)$, and the Lemma is equivalent to showing that
$$
\left(\begin{array}{c}
x+a \\
x^2+b
\end{array}\right)
=\left(\begin{array}{c}
y \\
-y^2
\end{array}\right)
$$
has at most one solution $(x,y)$ with $x,y<0$. But this is the intersection of the circle $x^2+y^2=-b$ and the line $y=x+a$, which is at most 2 points.  One of these points has $x>0$, so there is only one solution.
\end{proof}
\v

\begin{lemma}\label{s_unique}
Let $D=L^-\cup R^+$.  Then, for any two points $P$ and $Q$, there are at most 2 solutions $v\in \mathbb{R}^2$ satisfying $P,Q\in (D+v)$.  The same holds for $-D$ by symmetry.

\end{lemma}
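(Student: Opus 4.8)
The plan is to turn the condition on $v$ into a statement about the intersection of two translates of a single planar curve, and then reduce that to a one-variable count. Since $P,Q\in D+v$ is equivalent to $v\in (P-D)\cap(Q-D)$, it suffices to prove $|(P-D)\cap(Q-D)|\le 2$; we may assume $P\neq Q$, as the statement is only meaningful for two distinct points.

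The key structural observation is that $D$ is contained in the graph of an odd function. Unwinding the definitions, $L^-=\{(t,-t^2):t<0\}$ and $R^+=\{(t,t^2):t>0\}$, so $D\subseteq\Gamma$ where $\Gamma=\{(t,\,t|t|):t\in\mathbb{R}\}$ is the graph of $f(t)=t|t|$. Because $f$ is odd, for $P=(p_1,p_2)$ the set $P-\Gamma$ is exactly the graph of $x\mapsto f(x-p_1)+p_2$, i.e. a translate of $\Gamma$ that is still the graph of a function of the first coordinate. Since $D\subseteq\Gamma$, it is enough to bound the number of $x$ with $f(x-p_1)+p_2=f(x-q_1)+q_2$. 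If $p_1=q_1$ this forces $p_2=q_2$, contradicting $P\neq Q$, so no solutions occur; hence assume $a:=q_1-p_1\neq 0$, substitute $u=x-p_1$, and the equation becomes $\psi(u)=q_2-p_2$, where $\psi(u):=u|u|-(u-a)|u-a|$.

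The heart of the matter is that $\psi$ is \emph{unimodal}. A short case analysis on the signs of $u$ and $u-a$ shows that $\psi$ is linear with slope $-2a$ on one outer ray, linear with slope $2a$ on the other, and a single quadratic arc in between; computing $\psi'$ shows that $\psi$ is strictly monotone on $(-\infty,a/2]$ and strictly monotone in the opposite sense on $[a/2,\infty)$, and the pieces fit together continuously at $u=0$ and $u=a$. A continuous function with this shape attains each value at most twice, which yields $|(P-D)\cap(Q-D)|\le 2$. For the last assertion, one checks directly that $-L^-=R^+$ and $-R^+=L^-$, so in fact $-D=D$ and the claim for $-D$ is literally the same statement (alternatively it follows from the claim for $D$ by applying $v\mapsto -v$ and $P,Q\mapsto -P,-Q$).

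I expect the only mildly fussy point to be the bookkeeping for $\psi$ — verifying the slopes in each regime, continuity at the two breakpoints, and the location $u=a/2$ of the unique turning point (a minimum when $a>0$, a maximum when $a<0$) — but this is a routine computation with no genuine obstacle. All of the content lies in recognizing $D$ as part of the graph of the strictly monotone odd function $t|t|$, after which the intersection of two translates is governed by a function that is monotone away from a single point.
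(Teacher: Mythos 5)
Your proof is correct and follows essentially the same route as the paper: both reduce $D$ to the graph of $f(t)=t|t|$ and show that the difference function ($\psi(u)=f(u)-f(u-a)$, which is the paper's $g$ after a change of variable) is piecewise linear/quadratic and unimodal about $u=a/2$, hence takes each value at most twice. Your extra care with the degenerate case $p_1=q_1$, with the sign of $a$, and the observation that $-D=D$ are minor refinements of the same argument.
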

\v

\begin{proof}

Let
$$
f(x)=\left\{\begin{array}{ll}
x^2 & :x\geq 0 \\
-x^2 & :x<0
\end{array}\right.
$$
\v

\n We are looking for solutions to $g(x):=f(x+h)-f(x)=k$, where $Q-P=(h,k)$. There is a one-to-one correspondence between solutions to this equation and translations satisfying $P,Q\in (D+v)$, by taking $v=P-(x_0,f(x_0))$.  We compute the derivative
$$
g'(x)=\left\{\begin{array}{ll}
2h & :x\geq 0 \\
-2h & :x\leq -h \\
4x+2h & :-h<x<0
\end{array}\right.
$$
Therefore, $g$ is decreasing to the left of $x=-\frac{h}{2}$ and increasing to the right of $x=-\frac{h}{2}$, so we conclude that there are at most 2  solutions to the equation $g(x)=k$.  

\end{proof}














\begin{figure}
    \centering
\begin{tikzpicture}[scale=0.8]
  \draw[domain=-2:2,smooth,variable=\x,black,thick] plot ({\x},{\x*\x});
  \draw[domain=-2:2,smooth,variable=\x,black,thick] plot ({\x},{-\x*\x});
  
  \node[fill=black, circle, minimum size=4pt, inner sep=0pt, label=left:$x^{1}$] (x_1) at (-1.8708,3.5) {};

    \node[fill=black, circle, minimum size=4pt, inner sep=0pt, label= left:$x^{2}$] (x_2) at (1.6124,2.5) {};

    \node[fill=black, circle, minimum size=4pt, inner sep=0pt, label= left:$x^{4}$] (x_4) at (-1.2247,-1.5) {};

    \node[fill=black, circle, minimum size=4pt, inner sep=0pt, label= left:$x^{3}$] (x_3) at (-0.7071,-0.5) {};

\end{tikzpicture}
    \caption{Four points placed on the Symmetrized Parabola, used to illustrate the different geometric configurations examined in the casework that follows}
\end{figure}
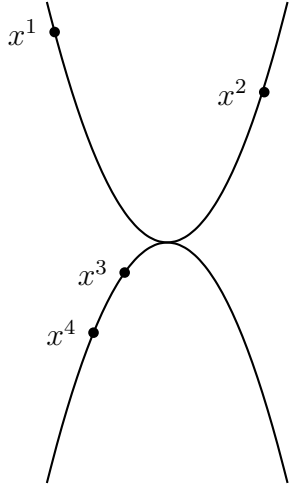

\begin{lemma}\label{height_of_points}
    Assume that $\{x^1,\dots, x^4\}$ are shattered, with the notation from above for $x^i$ and $y^I$, where $x^i=(x_1^i,x_2^i)\in \mathbb{R}^2$. Then $x_2^1,x_2^2,x_2^3,x_2^4$ are distinct.
\end{lemma}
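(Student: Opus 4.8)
The plan is to deduce the lemma from a purely geometric counting fact: any two distinct points of $\mathbb{R}^2$ that share a second coordinate lie together in at most two translates of $P$. The first step is to reformulate membership in a translate of $P$. Since $P^+=\{(a,a^2)\}$ and $P^-=\{(a,-a^2)\}$, a point $(a,b)$ lies on $P$ exactly when $|b|=a^2$. Hence for $y=(y_1,y_2)$ and $x=(x_1,x_2)$ we have $x\in y+P$ if and only if
$$
|x_2-y_2|=(x_1-y_1)^2 .
$$
This is the key move, because it collapses the $P^+$ / $P^-$ dichotomy into a single equation and removes all sign casework.

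Next I would prove the counting claim. Suppose $x^i=(x_1^i,x_2^i)$ and $x^j=(x_1^j,x_2^j)$ are distinct with $x_2^i=x_2^j$; since the points are distinct this forces $x_1^i\neq x_1^j$. If a translate $y+P$ contains both $x^i$ and $x^j$, the reformulation gives
$$
(x_1^i-y_1)^2=|x_2^i-y_2|=|x_2^j-y_2|=(x_1^j-y_1)^2 .
$$
Because $x_1^i\neq x_1^j$, the equality of these two squares forces $y_1$ to be the midpoint $m:=\tfrac12(x_1^i+x_1^j)$. Then $|x_2^i-y_2|=(x_1^i-m)^2=\bigl(\tfrac12(x_1^i-x_1^j)\bigr)^2>0$, so $y_2$ is one of the two distinct values $x_2^i\pm\bigl(\tfrac12(x_1^i-x_1^j)\bigr)^2$. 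Thus there are at most two choices of $y$, hence at most two translates of $P$ containing both $x^i$ and $x^j$.

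The third step is to apply this under the shattering hypothesis. Assume for contradiction that two of the four shattered points share a second coordinate; call them $x^i$ and $x^j$, and put $\{k,\ell\}=[4]\setminus\{i,j\}$. For each of the four subsets $I\in\{\{i,j\},\{i,j,k\},\{i,j,\ell\},[4]\}$ the point $y^I$ satisfies $x^i,x^j\in y^I+P$, and the four sets $y^I+P$ are pairwise distinct because for any two such subsets the corresponding hypotheses $h_{y^I}$ disagree on $x^k$ or on $x^\ell$. This yields four distinct translates of $P$ each containing both $x^i$ and $x^j$, contradicting the bound of the previous step. Therefore $x_2^1,x_2^2,x_2^3,x_2^4$ are distinct, and this lemma then feeds into the subsequent casework showing no four points are shattered.

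Regarding difficulty: I expect no analytic obstacle at all — the entire argument rests on the reformulation $x\in y+P\iff|x_2-y_2|=(x_1-y_1)^2$, after which everything is elementary. The only points demanding a moment's care are that $x_1^i\neq x_1^j$ (otherwise $x^i=x^j$, impossible for a shattered set of distinct points) and that the two candidate values of $y_2$ really are distinct, which holds since $\tfrac12(x_1^i-x_1^j)\neq 0$. In short, the "main obstacle" is simply noticing the right reformulation; the combinatorics after that is forced.
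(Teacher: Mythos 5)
Your proof is correct. It differs from the paper's argument mainly in its key lemma: you reformulate membership as $x\in y+P\iff |x_2-y_2|=(x_1-y_1)^2$, deduce that two distinct points of equal height lie in at most two translates of $P$ (the center's first coordinate is forced to be the midpoint, the second coordinate has two choices), and then pigeonhole among the four translates $y^I$ with $I\supseteq\{i,j\}$, which are pairwise distinct because the corresponding hypotheses disagree on $x^k$ or $x^\ell$. The paper instead argues by branches: two equal-height points in a common translate must lie on the same branch (a split between $P^+$ and $P^-$ would force both to the vertex), two distinct translates of a single branch meet in at most one point, and hence the pair is pushed onto the minus branch of both $y^{\{i,j,k\}}+P$ and $y^{\{i,j,\ell\}}+P$, forcing $y^{\{i,j,k\}}=y^{\{i,j,\ell\}}$ --- so it needs only three of the subsets containing $\{i,j\}$ and reuses the same branch-intersection facts that drive the rest of the section's casework. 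Your route buys a clean, sign-free quantitative statement (your two admissible centers correspond exactly to the ``both on the top branch'' and ``both on the bottom branch'' configurations in the paper), at the mild cost of invoking all four supersets of $\{i,j\}$; both arguments are elementary and complete, and your observation that $x_1^i\neq x_1^j$ follows from distinctness of shattered points is the same implicit step the paper relies on.
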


\begin{proof}
    Suppose, for contradiction, that there exists distinct points $x^i \neq x^j$ such that $x^i_2 =x^j_2$. Without loss of generality, assume $x^i,x^j \in y^{1234}+P^+$, since they must either be both in the top branch or both in the bottom branch. Since $(y^{1234}+P^+)\cap (y^{123}+P^+)$ has at most one point, we see that $x^i$ and $x^j$ cannot both lie in $y^{123}+P^+$. Moreover, since they share the same height, it is impossible for one to lie on $y^{123}+P^+$ and the other on $y^{123}+P^-$. It follows that $x^i,x^j \in y^{123}+P^-$. By a similar argument, we must also have $x^i,x^j \in y^{124}+P^-$. This implies $y^{123}=y^{124}$, a contradiction. 
\end{proof}

\begin{lemma}\label{shatter4Upper}
    Assume that $\{x^1,\dots,x^4\}$ are shattered, with the notation from above for $x^i$ and $y^I$, where $x^i=(x_1^i,x_2^i)\in \mathbb{R}^2$. Further assume that the points are indexed so that their heights are in strictly decreasing order, i.e. $x_2^1 > x_2^2 > x_2^3 > x_2^4$. Then $x^1,x^2,x^3,x^4$ cannot all lie in $P^+.$
\end{lemma}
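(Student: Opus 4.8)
The plan is to argue by contradiction. Suppose the shattering configuration has $x^1,\dots,x^4\in P^+$, so $x^i=(a_i,a_i^2)$ with $a_1^2>a_2^2>a_3^2>a_4^2$. First I would dispose of the case $y^{1234}\neq 0$: every $x^i$ lies in $y^{1234}+P$ and in $P^+$, so all four points lie in $P^+\cap(y^{1234}+P)$; but $|P^+\cap(v+P^+)|\le 1$ and $|P^+\cap(v+P^-)|\le 2$ for $v\neq 0$ (each a routine one-variable computation in the parametrization $(t,t^2)$), so this intersection has at most $3$ points, a contradiction. Hence, after translating, we may assume $y^{1234}=0$, so each $x^i\in P$; and for any triple $J=\{i,j,k\}$ the translate $y^J+P$ contains $x^i,x^j,x^k$ but not the remaining point, so in particular $y^J\neq y^{1234}=0$.

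Next, for a fixed triple $J=\{i,j,k\}$ I would determine how its three points split between the branches $y^J+P^+$ and $y^J+P^-$. Applying $|P^+\cap(v+P^+)|\le 1$ (with $v=y^J\neq 0$) rules out two of $x^i,x^j,x^k$ lying on $y^J+P^+$, and $|P^+\cap(v+P^-)|\le 2$ rules out all three lying on $y^J+P^-$; hence exactly one of them lies on $y^J+P^+$ and the other two on $y^J+P^-$. Say $x^i,x^j$ are the two on $y^J+P^-$. A short computation (the same one that proves a downward parabola of leading coefficient $-1$ through two points with distinct $x$-coordinates is unique) shows this parabola has vertex $(a_i+a_j,\,a_i^2+a_j^2)$; since $P^+$ and $P^-$ both have vertex $(0,0)$, the vertices of $y^J+P^+$ and $y^J+P^-$ are both $y^J$, forcing $y^J=(a_i+a_j,\,a_i^2+a_j^2)$. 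Since $x^k\in y^J+P^+$ lies at or above the vertex of that upward parabola, we get $a_k^2\ge a_i^2+a_j^2$. Because the heights in the triple are distinct and nonnegative, this forces $x^k$ to be the \emph{highest} of the three: if some $x^\ell$ with $\ell\in\{i,j\}$ were higher, then $a_k^2\ge a_i^2+a_j^2\ge a_\ell^2>a_k^2$.

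The conclusion then drops out by comparing the triples $\{1,3,4\}$ and $\{2,3,4\}$. In $\{1,3,4\}$ the highest point is $x^1$, so $x^3,x^4$ lie on $y^{134}+P^-$; in $\{2,3,4\}$ the highest point is $x^2$, so $x^3,x^4$ lie on $y^{234}+P^-$. Thus $y^{134}+P^-$ and $y^{234}+P^-$ are both downward parabolas through $x^3$ and $x^4$, which have distinct $x$-coordinates (their heights differ), so by uniqueness $y^{134}+P^-=y^{234}+P^-$ and hence $y^{134}=y^{234}$. But $y^{134}+P$ contains $x^1$ while $y^{234}+P$ does not, a contradiction. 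I expect the main obstacle to be the key structural observation of the second paragraph—that in each triple the unique point on the upward branch must be the one of greatest height—since it is precisely this that makes the two triples $\{1,3,4\}$ and $\{2,3,4\}$ pin down the \emph{same} translate; the remaining ingredients are the elementary intersection counts and the vertex computation for a translate of a parabola.
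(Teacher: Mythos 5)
Your proposal is correct and follows essentially the same route as the paper: both arguments compare the triples $\{1,3,4\}$ and $\{2,3,4\}$, show that $x^3,x^4$ must lie on the downward branches $y^{134}+P^-$ and $y^{234}+P^-$, and conclude $y^{134}=y^{234}$, a contradiction. Your vertex computation $y^J=(a_i+a_j,\,a_i^2+a_j^2)$, forcing the highest point of each triple onto the upward branch, simply spells out the branch assignment that the paper's proof asserts more tersely.
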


\begin{proof}
    Suppose $x^1,\dots,x^4 \in P^+$. It follows that $x^3 ,x^4\in y^{134} + P^-$, since $x^1\in P^+\cap (y^{134}+P^+),$ and the intersection contains at most one point.  For similar reasons $x^3,x^4 \in y^{234}+P^-$. This implies $y^{134}=y^{234}$, a contradiction.
\end{proof}

\begin{lemma}\label{x3upper}
     Assume that $\{x^1,\dots,x^4\}$ are shattered, with the notation from above for $x^i$ and $y^I$, where $x^i=(x_1^i,x_2^i)\in \mathbb{R}^2$. Further assume that the points are indexed so that their heights are in strictly decreasing order, i.e. $x_2^1 > x_2^2 > x_2^3 > x_2^4$. Then $x^3$ cannot lie in $P^+.$   
\end{lemma}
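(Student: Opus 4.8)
The plan is to argue by contradiction: assume $\{x^1,\dots,x^4\}$ is shattered with $x_2^1>x_2^2>x_2^3>x_2^4$ and $x^3\in P^+$, and derive a contradiction. We may translate so that $y^{1234}=0$, whence $x^1,\dots,x^4\in P$. The first step is to pin down the branch of every point. Since $x^3\in P^+$ we have $x_2^3=(x_1^3)^2\ge 0$, hence $x_2^1>x_2^2>x_2^3\ge 0$; because every point of $P^-$ has non-positive height, the points $x^1$ and $x^2$ (of strictly positive height) lie on $P^+$. Thus $x^1,x^2,x^3\in P^+$, and Lemma \ref{shatter4Upper} then forces $x^4\notin P^+$, i.e. $x^4\in P^-\setminus\{0\}$. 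So everything reduces to showing that this single configuration cannot be shattered.

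The basic bookkeeping device is the following. Two distinct translates of $P^+$ meet in at most one point, and a translate of $P^+$ meets a translate of $P^-$ in at most two points; so for any translate $y^I+P$ with $y^I\ne 0$, among the points of $\{x^1,x^2,x^3\}$ lying in $y^I+P$ at most one lies on $y^I+P^+$ and the rest lie on $y^I+P^-$. To decide which, I would use the elementary \emph{height identity}: if two points of $P^+$ of heights $h,h'$ lie on a common translate $(p,q)+P^-$ then $q=h+h'$, whereas a point of $P^+$ of height $h$ lying on $(p,q)+P^+$ satisfies $h\ge q$. Apply this to $I=\{1,2,3\}$ (nonzero since $x^4\notin y^{123}+P$): exactly one of $x^1,x^2,x^3$ lies on $y^{123}+P^+$ and the other two on $y^{123}+P^-$, and if the one on the $P^+$-part were $x^2$ or $x^3$ its height would be at least the sum of the other two heights, which already exceeds $x_2^1$ --- impossible. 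Hence $x^1\in y^{123}+P^+$ while $x^2,x^3\in y^{123}+P^-$.

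Next I would run the same analysis on the remaining nontrivial translates $y^{12},y^{13},y^{23},y^{124},y^{134},y^{234}$. For each, the height ordering (exactly as in the $y^{123}$ case) together with uniqueness of the downward parabola through two points of distinct abscissae and the fact that distinct subsets give distinct $y^I$ determines which of its points sit on the $P^+$-part and which on the $P^-$-part; for example one gets $x^2\in y^{23}+P^+$, $x^3\in y^{23}+P^-$, and $x^2\in y^{234}+P^+$, $x^3,x^4\in y^{234}+P^-$ (the position of $x^4$ on $P^-$ excludes $x^4\in y^{234}+P^+$). These identifications are rigid enough that, once one also records which half $L^\pm$ or $R^\pm$ of each branch the four points occupy, some fixed pair of our points is forced to lie simultaneously on a common translate of the monotone arc $D=L^-\cup R^+$ (or of its mirror image, by the same argument) for more of the $y^I$ than Lemma \ref{s_unique} permits --- or else in an $L^+/L^-$ configuration forbidden by Lemma \ref{left_unique} for a nonzero translate. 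This contradiction closes the argument.

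The hard part is this last step: there is a branching on which of $L^\pm,R^\pm$ each of the four points sits in, and within each branch one must choose the right family of translates $y^I$ and the right instance of Lemma \ref{s_unique} or Lemma \ref{left_unique} to reach the contradiction while keeping the casework finite. The height-ordering reductions illustrated above for $y^{123}$ and $y^{23}$ are precisely what makes the number of surviving cases manageable, and Lemmas \ref{s_unique} and \ref{left_unique} supply the counting bound that fails in each of them.
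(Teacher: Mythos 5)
Your reduction is fine: with $y^{1234}=0$, the assumption $x^3\in P^+$ gives $x_2^3\ge 0$, hence $x^1,x^2\in P^+$, and Lemma \ref{shatter4Upper} forces $x^4\in P^-$; the height identity you introduce (a point on $(p,q)+P^+$ has height $\ge q$, a point on $(p,q)+P^-$ has height $\le q$, and two points of $P^+$ on a common translate of $P^-$ force $q$ to be the sum of their heights) is correct and is exactly the right tool. The problem is that you never actually finish. Your last step is only a plan: you assert that recording the $L^\pm/R^\pm$ positions will force ``some fixed pair'' to violate Lemma \ref{s_unique} or Lemma \ref{left_unique}, but the casework is neither enumerated nor executed, and nothing in the proposal shows that it terminates in a contradiction --- you flag it yourself as ``the hard part.'' Moreover, some intermediate placements you state as forced are not: $x^2\in y^{23}+P^+$ and $x^2\in y^{234}+P^+$ do not follow from the stated reasoning, since two points of $P^+$ (here $x^2,x^3$) may both lie on the downward part $y^I+P^-$ of a single translate, and nothing you say rules out all three of $x^2,x^3,x^4$ lying on $y^{234}+P^-$. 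So as written this is a genuine gap, and the heavy $L^\pm/R^\pm$ branching you are steering toward is the machinery the paper reserves for the much harder case $x^3\in P^-$ (Lemmas \ref{main_case} and \ref{dragon}); it is not needed here.

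In fact your own height identity closes the argument immediately, with no quadrant casework and no appeal to Lemmas \ref{left_unique} or \ref{s_unique}; this is essentially the paper's proof. Consider $y^{134}$ (nonzero, since $x^2\notin y^{134}+P$ while $x^2\in P$). The points $x^1,x^3\in P^+$ cannot both lie on $y^{134}+P^+$, and any point on the upward branch has height $\ge q_{134}\ge$ the height of any point on the downward branch; so if $x^3$ were on $y^{134}+P^+$ then $x_2^1\le q_{134}\le x_2^3$, contradicting $x_2^1>x_2^3$. Hence $x^3\in y^{134}+P^-$, and then $x^4\in y^{134}+P^+$ would give $x_2^4\ge q_{134}\ge x_2^3$, contradicting $x_2^3>x_2^4$; so $x^3,x^4\in y^{134}+P^-$. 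The identical argument with $y^{234}$ gives $x^3,x^4\in y^{234}+P^-$. A translate of $P^-$ is a function graph, so the translate of $P^-$ through the two distinct points $x^3,x^4$ is unique, forcing $y^{134}=y^{234}$ --- impossible, since $x^2$ lies in $y^{234}+P$ but not in $y^{134}+P$.
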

\begin{proof}
    Suppose $x^3 \in P^+$. It follows that $x^1,x^2 \in P^+$. Moreover, following lemma $\ref{shatter4Upper}$ $x^4\in P^-$. By the same reasoning as before, $x^3$ must simultaneously satisfy 

    \begin{align}
        x^3 \in y^{134}+P^- \quad\text{and}\quad x^3\in y^{234}+P^-
    \end{align}
    However, clearly $x^4\in (y^{134}+P^-)\cap (y^{234}+P^-)$ as well, forcing $y^{134}=y^{234}$, a contradiction.
\end{proof}

\begin{lemma}\label{main_case}
 Suppose that
 $$
 \{x^1,x^2,x^3,x^4\}\cup\{y^I: I\subseteq [4]\}
 $$
 is a shattering configuration with $x^i=(x_1^i,x_2^i)$, and $x_2^1>x_2^2>x_2^3>x_2^4$.  Then, we must have $x^1,x^2\in L^+$, $x^3\in L^-$, $x^4\in R^-$.    
\end{lemma}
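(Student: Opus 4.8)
The plan is to first decide, for each $x^i$, whether it lies on $P^+$ or on $P^-$, and only afterwards to pin down the left/right placement. Since $y^{1234}=0$ and every index lies in $[4]$, the shattering hypothesis gives $x^i\in y^{1234}+P=P$ for each $i$, so each $x^i$ lies on $P^+$ or on $P^-$. Lemma~\ref{x3upper} gives $x^3\in P^-$, and then $x_2^4<x_2^3\le 0$ forces $x^4\in P^-$ as well. To handle $x^2$ I would use the reflection $\sigma(x,y)=(x,-y)$: it is an invertible linear map with $\sigma(P^+)=P^-$ and $\sigma(P^-)=P^+$, hence it carries the shattering configuration $\{x^i\}\cup\{y^I\}$ to another shattering configuration in which all heights are negated; relabelling the indices by the permutation $(1\,4)(2\,3)$ makes the heights strictly decreasing again, and in the relabelled configuration the ``$x^3$'' is $\sigma(x^2)$, so Lemma~\ref{x3upper} gives $\sigma(x^2)\notin P^+$, i.e. $x^2\notin P^-$, whence $x^2\in P^+$. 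Finally $x_2^1>x_2^2\ge 0$ forces $x^1\in P^+$. Since the heights are distinct, at most one $x^i$ can equal the origin, and that degenerate position can be excluded separately (or absorbed into the convention that the origin belongs to $L^{\pm}$); in particular $x^1,x^4\ne(0,0)$. So at this point $x^1,x^2\in P^+$ and $x^3,x^4\in P^-$.

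It remains to locate each point in $L$ or $R$. The reflection $\tau(x,y)=(-x,y)$ fixes $P$, $P^+$, $P^-$ and every height while interchanging $L\leftrightarrow R$ (hence $L^{\pm}\leftrightarrow R^{\pm}$) and sending strictly-decreasing-height shattering configurations to the same; applying it if necessary we may assume $x^1\in L^+$. One then shows $x^2\in L^+$, $x^3\in L^-$, $x^4\in R^-$ by eliminating the remaining possibilities for the left/right pattern of $(x^2,x^3,x^4)$. The tools are: (i) for $v\ne 0$ each of $P^+\cap(v+P^+)$ and $P^-\cap(v+P^-)$ has at most one point, and more generally a translate $v+P^+$ (resp. $v+P^-$) through two points with distinct first coordinates is unique, so whenever two distinct $x^i,x^j$ are forced onto $y^I+P^+$ for one subset $I$ and onto $y^J+P^+$ for another subset $J$ we get $y^I=y^J$, contradicting that distinct labels give distinct vertices (an immediate consequence of shattering); (ii) a translate whose left half carries an $L^+$-point and an $L^-$-point must be the trivial translate $0=y^{1234}$, by Lemma~\ref{left_unique}, which again collides two labels; (iii) whenever the points in play all lie on a translate of the monotone curve $D=L^-\cup R^+$ or of $\tau(D)=L^+\cup R^-$, Lemma~\ref{s_unique} limits the translate to at most two choices and the same over-determination finishes the case. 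Running through the cases — for each subset $I\subseteq[4]$ deciding, from $|I|$ and the positions already known, which of the four sub-arcs $y^I+L^{\pm}$, $y^I+R^{\pm}$ contains each $x^i$ with $i\in I$ — leaves exactly the configuration $x^1,x^2\in L^+$, $x^3\in L^-$, $x^4\in R^-$.

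The main obstacle is this last step. Determining the $P^+$/$P^-$ split is essentially formal, following from Lemma~\ref{x3upper} and the symmetry $\sigma$, but the left/right analysis requires tracking consistently, across all the subsets $I$, which branch of which translate $y^I+P$ each $x^i$ sits on; the number of a priori cases is large and the bookkeeping is where the genuine work lies. A secondary nuisance, worth isolating at the start, is the handling of the degenerate cases in which some $x^i$ lands at the origin.
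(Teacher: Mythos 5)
Your first step -- pinning down the branch split $x^1,x^2\in P^+$, $x^3,x^4\in P^-$ via Lemma~\ref{x3upper}, the height ordering, and the reflection $\sigma(x,y)=(x,-y)$ with the index relabelling $(1\,4)(2\,3)$ -- is correct, and is in fact a cleaner justification than the paper's one-line appeal to Lemma~\ref{x3upper}. The problem is that this step is the easy part. The substance of the lemma is the left/right placement, and there your proposal stops at a declaration of intent: you list the right tools (uniqueness of a translate of $P^{\pm}$ through two points, Lemma~\ref{left_unique}, Lemma~\ref{s_unique}, and the fact that distinct labels $y^I\neq y^J$) and then assert that ``running through the cases leaves exactly the configuration'' claimed. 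The paper's proof consists of five separate case eliminations, for the patterns $(L^+,L^+,L^-,L^-)$, $(L^+,L^+,R^-,R^-)$, $(L^+,L^+,R^-,L^-)$, $(L^+,R^+,L^-,R^-)$, $(L^+,R^+,R^-,L^-)$, and each one is a nontrivial chain of deductions about which branch of which translate $y^I+P$ each point can occupy, driven by repeated ``otherwise $y^I$ would lie entirely to one side of $P$'' arguments and, in Case~5, a final ordering contradiction obtained only after working through $y^{124}$, $y^{134}$, $y^{234}$ and $y^{123}$ in sequence; Case~1 additionally needs a reflection trick. None of this follows formally from tools (i)--(iii); you yourself flag the bookkeeping as the main obstacle, and that bookkeeping is precisely where the entire content of the lemma lies. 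So the proposal has a genuine gap: the core case analysis is not carried out.

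A secondary point: the reduction ``apply $\tau(x,y)=(-x,y)$ to assume $x^1\in L^+$'' cannot prove the lemma as literally stated, because the conclusion is not mirror-symmetric -- $\tau$ sends the asserted configuration to $x^1,x^2\in R^+$, $x^3\in R^-$, $x^4\in L^-$ -- so at best you obtain the conclusion up to a left/right reflection. That weaker statement does suffice for the intended application (Lemma~\ref{dragon} applied to both mirror images rules out shattering four points), and the paper's own list of cases is itself silently working modulo this symmetry, but if you invoke the reduction you should state the conclusion accordingly. The degenerate possibility of a point at the origin, which you mention parenthetically, is not actually an issue once the strict height ordering is in force, since at most one $x^i$ can have height $0$ and the arguments never require excluding it.
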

\v

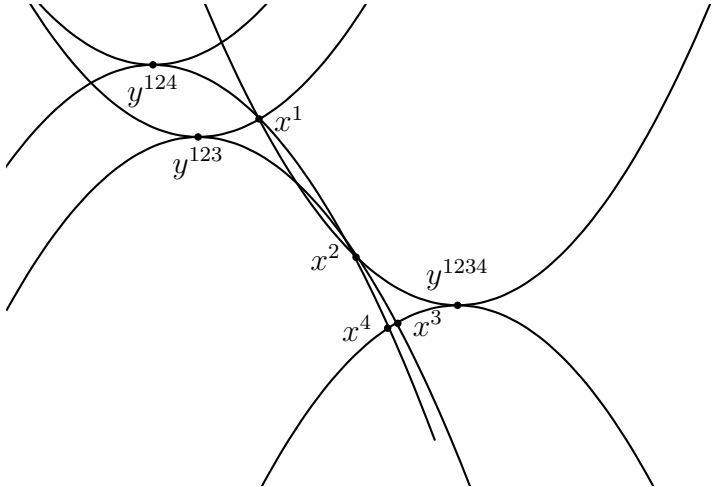
\begin{figure}[h!]
\centering
\begin{tikzpicture}[xscale=1.5, yscale=0.8] 
\clip (-4,-3) rectangle (5,5);
  \draw[domain=-2.3:2.3,smooth,variable=\x,thick] plot ({\x},{\x*\x});
  \draw[domain=-2.3:2.3,smooth,variable=\x,thick] plot ({\x},{-\x*\x});

  \node[fill=black, circle, inner sep=1pt, label=right:$x^{1}$] at (-1.76, 3.1) {};
  \node[fill=black, circle, inner sep=1pt, label=left:$x^{2}$] at (-0.9, 0.8) {};
  \node[fill=black, circle, inner sep=1pt, label=right:$x^{3}$] at (-0.53,-0.3) {};
  \node[fill=black, circle, inner sep=1pt, label=left:$x^{4}$] at (-0.62,-0.38) {};
  \node[fill=black, circle, inner sep=1pt, label=above:$y^{1234}$] at (0,0) {};
  \node[fill=black, circle, inner sep=1pt, ] at (-2.3, 2.8) {};
  \node[fill=black, circle, inner sep=1pt, ] at (-2.7, 4) {};

  \def\aone{-2.3}
  \def\bone{2.8}
  \draw[domain=-4:0.5,smooth,variable=\x,thick] plot ({\x},{-(\x-\aone)^2 + \bone});
  \draw[domain=-4:0.5,smooth,variable=\x,thick] plot ({\x},{(\x-\aone)^2 + \bone});
  \fill (\aone,\bone) circle (1pt) node[below] {$y^{123}$};

  \def\atwo{-2.7}
  \def\btwo{4}
  \draw[domain=-4.5:-0.2,smooth,variable=\x,thick] plot ({\x},{-(\x-\atwo)^2 + \btwo});
  \draw[domain=-4.5:-0.2,smooth,variable=\x,thick] plot ({\x},{(\x-\atwo)^2 + \btwo});
  \fill (\atwo,\btwo) circle (1pt) node[below] {$y^{124}$};

\end{tikzpicture}
\caption{
Geometric configuration used in proof of Lemma \ref{main_case}, Case 1, where the translates $y^{123}+P$ and $y^{124}+P$ force $x^3,x^4$ to lie to the right of $x^1,x^2$.
}
\end{figure}

\begin{proof}
 \n   Suppose $x^3 \in P^-$. By Lemma~\ref{x3upper}, we know $x^1,x^2\in P^+$. We divide into cases based on the number of points in $L^+$ and $L^-$. 
 \v

\n \textbf{Case 1:} $x^1,x^2 \in L^+$ and $x^3,x^4 \in L^-$.
Since $(y^{1234}+P^+)\cap(y^{123}+P^+)$ contains at most one point, $x^1$ and $x^2$ cannot both lie on $y^{123}+P^+$.
Thus, $x^2$ must lie on $y^{123}+P^-$.
If $x^2 \in y^{123}+L^-$, then so is $x^3$.  This also forces $x^1\in y^{123}+L^+$.  By Lemma \ref{left_unique}, since $x^1\in L^+$ and $x^3\in L^-$, we cannot also have $x^1\in y^{123}+L^+$ and $x^3\in y^{123}+L^-$.  Therefore, we conclude that $x^2\in y^{123}+R^-$, and therefore $x^3\in y^{123}+R^-$ as well.
\v

\n A symmetric argument applied to $y^{124}$ gives $x^2,x^4 \in y^{124}+R^-$.
Hence, $x^3$ and $x^4$ both lie to the right of $x^1,x^2$.
However, by reflecting the entire picture across the horizontal axis we see that the same argument could have been applied to show that $x^3$ and $x^4$ both lie to the left of $x^1$ and $x^2$.  This is a contradiction.  Therefore, it is impossible to have $x^1,x^2 \in L^+$ and $x^3,x^4 \in L^-$.
\v

\n \textbf{Case 2:} $x^1,x^2 \in L^+$ and $x^3,x^4\in R^-$. Since $(y^{1234}+P^+)\cap(y^{123}+P^+)$ contains at most one point, $x^1$ and $x^2$ cannot both lie on $y^{123}+P^+$. Hence, at least one of them must lie on $y^{123}+P^-$. Furthermore, in order to have $x^3\in y^{123}+P$, we must have $x^1,x^2 \in y^{123} + L$, since otherwise we would have $y^{123}$ lying entirely to the left of $P$, in which case $y^{123}+P$ would not intersect $R^-$. It is impossible for $x^1\in y^{123}+L^-$ for then, $x^2$ could not be hit. Thus, we must have $x^1\in y^{123}+L^+$ and $x^2\in y^{123}+L^-$. A symmetric argument with $y^{124}$ gives $x^1\in y^{124}+L^+$ and $x^2\in y^{124}+L^-$. However, by Lemma $\ref{left_unique}$ this implies $y^{123}=y^{124}$, a contradiction.
\v

\n  \textbf{Case 3:} $x^1,x^2\in L^+$, $x^3\in R^-$, $x^4\in L^-$.  Now, since $(y^{1234}+P^-)\cap (y^{234}+P^-)$ contains at most one point, $x^3$ and $x^4$ cannot both lie on $y^{234}+P^-$, and so $x^3\in y^{234}+P^+$, and so is $x^2$.  We see that $x^2\in y^{234}+L^+$, since otherwise $y^{234}$ would lie entirely to the left of $P$, contradicting that $x^3\in y^{234}+P$.  We see that $x^3\in y^{234}+R^+$ for similar reasons.  When we consider the position of $x^4$, we know by Lemma \ref{left_unique} it cannot be in $y^{234}+L^-$ since we already have $x^2\in L^+$ and $x^4\in L^-$.  We cannot have $x^4\in y^{234}+R$, since then $y^{234}$ would lie entirely to the left of $P$.  Thus, we conclude that $x^4\in y^{234}+L^+$.  The same argument applies to $y^{134}$, and so $\{x^1,x^3,x^4\}\subseteq y^{134}+P^+$.  This yields a contradiction, since $(y^{134}+P^+)\cap (y^{234}+P^+)$ has at most one point and cannot contain both $x^3$ and $x^4$.
\v

\n \textbf{Case 4:} $x^1\in L^+,x^2\in R^+,x^3\in L^-,x^4\in R^-$. Since $(y^{1234}+P^+)\cap(y^{123}+P^+)$ contains at most one point, $x^1$ and $x^2$ cannot both lie on $y^{123}+P^+$.  Therefore, $x^2,x^3\in y^{123}+P^-$.  Moreover, $x^1$ is also in $y^{123}+P^-$ because it cannot be in $y^{123}+L^+$ because of Lemma \ref{left_unique} applied to $\{x^1,x^3\}\subseteq L$, and it cannot be in $y^{123}+R$ because then $y^{123}$ would lie entirely to the left of $P$, contradicting that $x^2\in y^{123}+P$.  Now considering $y^{124}$, note that $x^1$ and $x^2$ cannot both lie on $y^{124}+P^+$, since $x^1,x^2\in P^+$, and also cannot both lie on $y^{124}+P^-$, since $x^1,x^2\in y^{123}+P^-$.  Therefore, $x^1\in y^{124}+P^+$ and $x^2\in y^{124}+P^-$.  We also see that $x^2,x^4\in y^{124}+R^-$, since if either were in a left branch then $y^{124}$ would lie entirely to the right of $P$, contradicting that $x^1\in y^{124}+P$.  For similar reasons, $x^1\in y^{124}+L^+$.   
\v

\n Now we consider $y^{234}$.  Since $\{x^3,x^4\}\subseteq P^-$, they cannot both be contained in $y^{234}+P^-$, so we see that $x^3\in y^{234}+P^+$ and hence so is $x^2$.  Once again we conclude that in fact $x^3\in y^{234}+L^+$, $x^2\in y^{234}+R^+$, and $x^4\in y^{234}+R$, because $y^{234}$ cannot lie either entirely to the left or entirely to the right of $P$.  Using Lemma \ref{left_unique}, we see that $x^4\in y^{234}+R^+$, since if it were in $y^{234}+R^-$ this would contradict the fact that $x^2\in R^+$ and $x^4\in R^-$.  
\v

\n Finally, consider $y^{134}$.  Since $\{x^3,x^4\}\subseteq P^-$, they cannot both be contained in $y^{134}+P^-$, so we see that $x^1,x^3\in y^{134}+P^+$.  Thus, once again they must be in $y^{134}+L^+$ to avoid $y^{134}$ lying entirely to the left of $P$.  Similarly, $x^3$ and $x^4$ cannot both lie in $y^{134}+P^+$, since they already both lie in $y^{234}+P^+$.  Hence, $x^4\in y^{134}+P^-$, and in fact $x^4\in y^{134}+R^-$ to avoid $y^{134}$ lying entirely to the right of $P$.  By Lemma \ref{s_unique}, we have a contradiction.
\v

\n \textbf{Case 5:}  $x^1\in L^+$, $x^2\in R^+$, $x^3\in R^-$, and $x^4\in L^-$.  Note that $x^2,x^4\in y^{124}+P^-$, since $\{x^1,x^2\}\subseteq P^+$ so they cannot share another upward parabola, and $x^4$ is below $x^2$.  Moreover, $x^2\in y^{124}+R^-$ to avoid $y^{124}$ being to the right of $P$.  Similarly, $x^4\in y^{124}+L^-$ and $x^1\in y^{124}+L$.  We claim $x^1\in y^{124}+L^-$.  If not, then by Lemma \ref{left_unique} we would have $y^{124}=y^{1234}$.  
\v

\n Consider $y^{134}$, observing that $x^1,x^3\in y^{134}+P^+$.  This is because $x^3,x^4\in y^{1234}+P^-$.  In particular, $x^3\in y^{134}+R^+$ because otherwise $y^{134}$ would lie entirely to the right of $P$.  Similarly, we conclude that $x^1\in y^{134}+L^+$ and $x^4\in y^{134}+L$.  We claim $x^4\in y^{134}+L^+$.  If not, then by Lemma \ref{left_unique} we would have $y^{134}=y^{1234}$.  
\v

\n For $y^{234}$, note that $x^2,x^3\in y^{234}+P^+$, since $x^3,x^4\in P^-$.  Moreover, $x^3\in y^{234}+R^+$, to avoid $y^{234}$ lying entirely to the right of $P$.  For similar reasons we conclude $x^2\in y^{234}+R^+$ and $x^4\in y^{234}+L$.  We claim that $x^4\in y^{234}+L^-$.  This is because $x^3,x^4\in y^{134}+P^+$.  
\v

\n Finally, we consider $y^{123}$, observing that $x^2,x^3\in y^{123}+P^-$, since $\{x^1,x^2\}\subseteq P^+$.  Note that $x^2\in y^{123}+R^-$, since otherwise $y^{123}$ would lie entirely to the right of $P$.  Similarly, $x^3\in y^{123}+R^-$.  This yields a contradiction, as $x^2,x^3\in y^{123}+R^-$ implies that $x^3$ is to the right of $x^2$, while $x^2,x^3\in y^{234}+R^+$ implies that $x^2$ is to the right of $x^3$.

\end{proof}

\begin{lemma}\label{dragon}
Suppose that 
$$
\{x^1,x^2,x^3,x^4\}\cup\{y^I: I\subseteq [4]\}
$$
is a shattering configuration, and assume $x_2^1>x_2^2>x_2^3>x_2^4$, $y^{1234}=(0,0)$.  It is not possible to have
$$
x^1,x^2\in L^+, \ \ \ x^3\in L^-, \ \ \ x^4\in R^-.
$$
    
\end{lemma}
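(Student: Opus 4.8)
The plan is to argue by contradiction, in the spirit of the case analysis already carried out for Lemma \ref{main_case}; combined with Lemma \ref{main_case} and Lemma \ref{height_of_points}, ruling out this last configuration shows that no four points of $\R^2$ are shattered by $\mathcal{H}_P(\R^2)$, which is the upper bound in Proposition \ref{shatter4}. So suppose
$$
\{x^1,x^2,x^3,x^4\}\cup\{y^I:I\subseteq[4]\}
$$
is a shattering configuration with $x_2^1>x_2^2>x_2^3>x_2^4$, $y^{1234}=(0,0)$, $x^1,x^2\in L^+$, $x^3\in L^-$, $x^4\in R^-$. Writing $x^i=(a_i,x_2^i)$, the height ordering together with the branch assignment forces $a_1<a_2<0$ (on $L^+$ the map $t\mapsto t^2$ is decreasing for $t<0$), $a_3<0<a_4$, and $a_3^2<a_4^2$, so $-a_4<a_3<0$; thus $x^4$ is the rightmost point and $x^1$ lies strictly left of $x^2$.

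The engine is to locate, for each three-element index set $I$, the arc $L^{\pm}$ or $R^{\pm}$ of $y^I+P$ on which each $x^i$ ($i\in I$) lies, using three recurring facts. (i) Two distinct translates of $P^+$ (or of $P^-$) meet in at most one point, and for every $I\subsetneq[4]$ we have $y^I\neq y^{1234}$ (pick $j\in[4]\sm I$; then $x^j\in y^{1234}+P$ but $x^j\notin y^I+P$); hence $x^1,x^2$, which both lie on the original $P^+$, cannot both lie on $y^I+P^+$. (ii) Height monotonicity: if two of the $x^i$ lie on $y^I+P$ and the higher one lies on $y^I+P^-$, then so does the lower one, since $y^I+P^-$ lies weakly below its vertex and $y^I+P^+$ weakly above it. (iii) The ``vertex not too far to one side'' principle of Cases~2--5 of Lemma \ref{main_case}: if $y^I+P$ is required to reach a point of prescribed horizontal sign while also passing through a second prescribed point, the arcs its points occupy are forced. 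As a bookkeeping byproduct, (i)--(iii) pin down the sign patterns of $(x^1,x^2)$ and of $(x^3,x^4)$ on each relevant translate: exactly four translates of $P$ pass through $\{x^1,x^2\}$ --- necessarily $y^{12},y^{123},y^{124},y^{1234}$ --- and likewise through $\{x^3,x^4\}$, which among other things forces the extra inequality $a_4>3|a_3|$.

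The target is the following contradiction: by Lemma \ref{left_unique}, at most one translate $v$ has $x^1\in v+L^+$ and $x^2\in v+L^-$; the Case~2 argument from Lemma \ref{main_case}, applied to $y^{124}+P$ (which carries $x^1,x^2\in L^+$ and the point $x^4\in R^-$), shows that $y^{124}$ is that translate; so it suffices to prove that $y^{123}$ also carries $x^1$ on its $L^+$-arc and $x^2$ on its $L^-$-arc, forcing $y^{123}=y^{124}$, which is impossible. Alternatively one can run a Case~3/4--style argument on $y^{134}$ and $y^{234}$ and close it with Lemma \ref{s_unique} or with the one-point intersection bound on two upward arcs; I would fall back on this if the $y^{123}$ route stalls.

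The analysis of $y^{123}$ splits on fact (i). If $x^1\in y^{123}+P^-$, then (ii) pushes $x^2$ and then $x^3$ onto $y^{123}+P^-$, so $x^1,x^2,x^3$ all lie on a common downward parabola; its vertex is then $y^{123}=(a_1+a_2,a_1^2+a_2^2)$ and one computes $a_3=a_1a_2/(a_1+a_2)$, so $y^{123}$ and $a_3$ are completely determined by $a_1,a_2$ --- a degenerate situation in which Lemma \ref{left_unique} does not apply and which must be eliminated by an explicit computation playing these values against the inequalities above and against another index set (for instance $y^{13}$, or the distinctness of $y^{123}$ from the remaining $y^I$). If instead $x^1\in y^{123}+P^+$, then $x^2\in y^{123}+P^-$ by (i) and $x^3\in y^{123}+P^-$ by (ii); fact (iii) (reaching $x^3\in L^-$ while avoiding $x^4\in R^-$) together with the Lemma \ref{left_unique} obstruction for the pair $(x^1,x^3)$ --- which forbids $x^1\in y^{123}+L^+$ and $x^3\in y^{123}+L^-$ from holding at once, since the only translate with that property is $y^{1234}\neq y^{123}$ --- then places $x^3$ on $y^{123}+R^-$ and $x^1,x^2$ on $y^{123}+L^+$, $y^{123}+L^-$, completing the argument. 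The degenerate first sub-case, where $x^1,x^2,x^3$ become co-parabolic on one downward parabola, is the step I expect to be the main obstacle: it is the only point at which pure arc-bookkeeping breaks down and a genuine, though short, computation in $a_1,a_2,a_3,a_4$ is required to reach a contradiction.
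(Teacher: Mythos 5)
Your overall strategy (pin down the arcs on $y^{124}+P$ via the Case~2 mechanism, then force the same pattern on $y^{123}+P$ and invoke Lemma \ref{left_unique} to get $y^{123}=y^{124}$) is plausible in outline, and the $y^{124}$ half is sound: since $y^{124}+P$ must contain $x^1,x^2\in L^+$ and reach $x^4\in R^-$, the Case~2 argument does give $x^1\in y^{124}+L^+$, $x^2\in y^{124}+L^-$, exactly as in the paper. But the $y^{123}$ half has two genuine gaps. First, the ``degenerate'' sub-case $x^1,x^2,x^3\in y^{123}+P^-$ is not eliminated: three points lying on a single downward parabola is internally consistent (with $y^{123}=(a_1+a_2,\,a_1^2+a_2^2)$ and $a_3=a_1a_2/(a_1+a_2)<0$, all sign and height constraints are satisfied), so any contradiction must come from interaction with $x^4$ or with other translates $y^J$, and you do not exhibit one; ``an explicit computation is required'' is an acknowledgment of the gap, not a proof. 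Second, in the non-degenerate sub-case the forcing you assert does not follow from facts (i)--(iii) and Lemma \ref{left_unique}. The Case~2 forcing relied on $y^I+P$ having to reach a point of $R^-$, i.e.\ across the vertical axis from $x^1,x^2$; for $y^{123}$ all three points $x^1,x^2,x^3$ lie on the left ($x^3\in L^-$, not $R^-$), and ``avoiding $x^4$'' imposes no arc constraint. Concretely, with $a_1=-2.03$, $a_2=-1.24$, $a_3=-0.67$ there is a translate $v\approx(-2.70,\,3.66)$ with $x^1\in v+R^+$ and $x^2,x^3\in v+R^-$, respecting all the height and branch data of the hypothesis; so the placement $x^1\in y^{123}+L^+$, $x^2\in y^{123}+L^-$ cannot be deduced from the local information you cite, and your intended application of Lemma \ref{left_unique} to the pair $(x^1,x^2)$ does not go through as written.

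For comparison, the paper does not use $y^{123}$ at all: after the $y^{124}$ step it analyzes $y^{134}$, using Lemma \ref{s_unique} applied to the pair $\{x^1,x^4\}$ (which already lies in $(-D)\cap(y^{124}-D)$) to force $x^3,x^4\in y^{134}+P^+$, and then reaches the contradiction through $y^{234}$ by comparing the two upward parabolas through $x^3$ (the one through $x^2$ versus the one through $x^1$) to show $y^{234}$ lies below $y^{134}$ while $x^4\in(y^{134}+P^+)\cap(y^{234}+P^-)$. That is essentially the ``fallback'' route you mention but do not develop; as it stands, your proposal does not constitute a proof of the lemma.
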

\v
\begin{figure}[h!]
\centering
\begin{tikzpicture}[xscale=1.5, yscale=0.8] 
\clip (-4,-3) rectangle (5,5);
  \def\azero{-0.05}
  \def\bzero{0.1}
  \draw[domain=-2.3:2.3,smooth,variable=\x,thick] plot ({\x},{(\x-\azero)^2+\bzero)});
  \draw[domain=-2.3:2.3,smooth,variable=\x,thick] plot ({\x},{-(\x-\azero)^2+\bzero)});

  \node[fill=black, circle, inner sep=1pt, label=right:$x^{1}$] at (-1.818, 3.24) {};
  \node[fill=black, circle, inner sep=1pt, label=left:\hspace{-1cm} $x^{2}$] at (-1.289, 1.65) {};
  \node[fill=black, circle, inner sep=1pt, label=left:$x^{3}$] at (-0.724,-0.35) {};
  \node[fill=black, circle, inner sep=1pt, label=right:$x^{4}$] at (1.075,-1.159) {};
  \node[fill=black, circle, inner sep=1pt, label=below : $y^{1234}$] at (-0.05,0.1) {};
  \node[fill=black, circle, inner sep=1pt, ] at (0.4, -1.63) {};
  \node[fill=black, circle, inner sep=1pt, ] at (-.7, 2) {};

  \def\aone{0.4}
  \def\bone{-1.63}
  \draw[domain=-4:4.5,smooth,variable=\x,thick] plot ({\x},{-(\x-\aone)^2 + \bone});
  \draw[domain=-4:4.5,smooth,variable=\x,thick] plot ({\x},{(\x-\aone)^2 + \bone});
  \draw[dashed] (-0.05, 0.1) -- (-0.05, 6);
  \fill (\aone,\bone) circle (1pt) node[below] {$y^{134}$};

  \def\atwo{-.7}
  \def\btwo{2}
  \draw[domain=-4.5:4.5,smooth,variable=\x,thick] plot ({\x},{-(\x-\atwo)^2 + \btwo});
  \draw[domain=-4.5:4.5,smooth,variable=\x,thick] plot ({\x},{(\x-\atwo)^2 + \btwo});
  \fill (\atwo,\btwo) circle (1pt) node[below] {$y^{124}$};

\end{tikzpicture}
\caption{
Geometric configuration used in proof of Lemma \ref{dragon}.  The positioning $x^1,x^2\in L^+$, $x^3\in L^-$, and $x^4\in R^-$ forces the placement of $y^{124}$ and $y^{134}$ to agree with this diagram.
}\label{dragon_fig}
\end{figure}
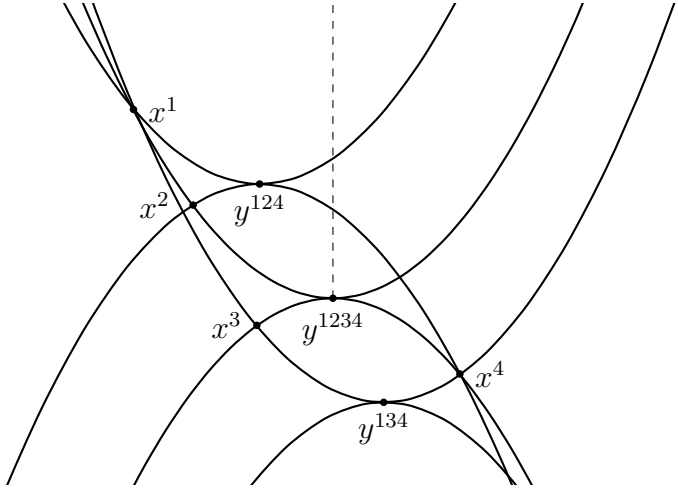
\begin{proof}
We suppose that such a shattering configuration does satisfy 
$$
x^1,x^2\in L^+, \ \ \ x^3\in L^-, \ \ \ x^4\in R^-,
$$
and seek a contradiction.  First, consider $y^{124}$.  Since $x^1,x^2\in P^+$, they cannot both be in $y^{124}+P^+$.  Therefore, $x^2,x^4\in y^{124}+P^-$.  It is easy to see that $y^{124}$ must lie above the parabola $P^+$ and to the left of the vertical axis in order for $y^{124}+P^-$ to meet $x^4\in R^-$.  Since $(y^{124}+P^-)\cap L^+$ in only one point, namely $x^2$, it follows that $x^1\in y^{124}+L^+$.  
\v

\n Next, consider $y^{134}$.  Since $x^3,x^4\in P^-$, they cannot both be in $y^{134}+P^-$, so we conclude that $x^3\in y^{134}+P^+$.  It follows from Lemma \ref{s_unique} that $x^4\in y^{134}+P^+$ as well, since we already have 
$$
\{x^1,x^4\}\subseteq  (-D)\cap (y^{124}-D),
$$
\v

\n and so we cannot have $\{x^1,x^4\}\subseteq y^{134}-D$ as well.  This shows that we cannot have $x^4\in y^{134}+R^-$.  We also cannot have $x^4\in y^{134}+L^-$, since then $y^{134}$ would lie entirely to the right of $P$, and so this contradicts that $x^1\in y^{134}+P^+$.  
\v

\n So far, we have justified that the placement of $y^{124}$ and $y^{134}$ agree with Figure \ref{dragon_fig}.  Finally, we will find a contradiction by considering $y^{234}$.  $y^{234}+P$ is not included in the diagram precisely because there is no geometric possibility.  We see that $x^4\in y^{234}+P^-$, since if $x^4\in y^{234}+P^+$ then so is $x^3$, and we already have
$$
\{x^3,x^4\}\subseteq y^{134}+P^+.
$$
Similarly, we see that $x^3\in y^{234}+P^+$, since we already have
$$
\{x^3,x^4\}\subseteq P^-.
$$
Therefore $x^2\in y^{234}+P^+$ as well. Next we claim that $y^{234}$ lies below and to the right of $y^{134}$.  To see this, note that $y^{134}+P^+$ and $y^{234}+P^+$ are both upward facing parabolas going through the point $x^3$.  Therefore, whichever vertex is farther to the right corresponds to a parabola with a steeper negative slope to the left of $x^3$, and hence lies strictly above the other parabola on the interval between $x^3$ and the point where it intersects the parabola $P^+$.  Therefore, whichever one is farther to the right will also have its corresponding intersection point with $P^+$ be farther to the right.  Thus, we must be describing the parabola going through $x^3$ and $x^2$, not the one going through $x^3$ and $x^1$, because $x^2$ lies to the right of $x^1$.  
\v

\n This immediately gives a contradiction, since we have shown that $y^{234}$ lies below $y^{134}$, and 
$$
x^4\in (y^{134}+P^+)\cap (y^{234}+P^-)
$$
    
\end{proof}

\begin{corollary}
Combining Lemmas~\ref{height_of_points}–\ref{dragon}, we conclude that $x^3$ cannot lie in either branch $P^+$ or $P^-$. Hence, no set of four points in $\mathbb{R}^2$ can be shattered.
\end{corollary}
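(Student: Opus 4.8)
The plan is to assemble the preceding lemmas into the claimed impossibility, by bookkeeping on the position of the single point $x^3$. Suppose for contradiction that some four-point set $\{x^1,x^2,x^3,x^4\}$ is shattered by $\mathcal{H}_P(\R^2)$, witnessed by $\{y^I : I\subseteq [4]\}$ with $x^i\in y^I+P$ exactly when $i\in I$. Translating the whole configuration by $-y^{1234}$, we may assume $y^{1234}=(0,0)$; then $x^i\in y^{1234}+P=P$ for every $i$, so each $x^i$ lies on the symmetrized parabola $P=P^+\cup P^-$. By Lemma~\ref{height_of_points} the second coordinates $x_2^1,x_2^2,x_2^3,x_2^4$ are pairwise distinct, so after permuting the indices we may assume $x_2^1>x_2^2>x_2^3>x_2^4$. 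This relabeling is harmless — a shattering configuration stays a shattering configuration under a permutation of the $x^i$ — and it is exactly the hypothesis under which Lemmas~\ref{shatter4Upper}, \ref{x3upper}, \ref{main_case}, and \ref{dragon} are stated.

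Now I track $x^3$. By Lemma~\ref{x3upper} we cannot have $x^3\in P^+$; since $x^3\in P=P^+\cup P^-$, this forces $x^3\in P^-$. Feeding this into Lemma~\ref{main_case}, whose proof subdivides into the five cases according to how the $x^i$ distribute among the arcs $L^\pm,R^\pm$, the only surviving possibility is $x^1,x^2\in L^+$, $x^3\in L^-$, $x^4\in R^-$. But this is precisely the configuration excluded by Lemma~\ref{dragon}. The resulting contradiction shows that $x^3$ can lie in neither branch $P^+$ nor $P^-$, hence that no four-point set can be shattered, which is the statement of the corollary.

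Combined with the elementary fact that three points \emph{can} be shattered — one exhibits an explicit triple together with eight translates realizing all subsets, which is easy since the only constraints in play are that two translates of $P^+$ (or of $P^-$) meet in at most one point, leaving ample freedom — this yields $\mathrm{VCdim}(\mathcal{H}_P(\R^2))=3$, completing Proposition~\ref{shatter4}.

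\textbf{Main obstacle.} With the lemmas available, the corollary itself is pure assembly; the genuine difficulty is already absorbed into Lemmas~\ref{main_case} and \ref{dragon}, where every placement of the four points among the four arcs $L^\pm,R^\pm$ must be ruled out. Were one proving those from scratch, the hard part would be organizing the casework so that each case collapses, via the ``two translates of the same branch meet once'' principle together with the rigidity statements of Lemmas~\ref{left_unique} and \ref{s_unique}, either to forcing two of the sets $y^I$ to coincide or to an impossible left/right ordering among the $x^i$. The one point to verify in the assembly step is that the height-ordering relabeling is legitimate, which is immediate from the definition of shattering.
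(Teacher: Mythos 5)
Your assembly is exactly the intended argument: normalize $y^{1234}=(0,0)$ so all $x^i\in P$, use Lemma~\ref{height_of_points} to order the heights, rule out $x^3\in P^+$ by Lemma~\ref{x3upper}, reduce the case $x^3\in P^-$ to the single configuration of Lemma~\ref{main_case}, and eliminate it by Lemma~\ref{dragon}. This matches the paper's proof of the corollary; the added remark about shattering three points is tangential to the statement but does no harm.
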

\v

\n While the intersection properties of parabolas work the same way over any field, many of our arguments in Euclidean space rely in an essential way on one point being farther left than another point, or being above another point.  We are using the fact that $\mathbb{R}$ is an ordered field in an essential way, which does not extend to $\mathbb{F}_q$.  This is not just an inconvenience in the proof mechanism, it actually changes the VC-dimension.  
\v

\subsection{Finite fields}

\n Consider $P=P^+\cup P^-$ over $\mathbb{F}q$, and Let $\mathcal{H}_P(\mathbb{F}_q^2)$ be the hypothesis class consisting of indicator functions $h_y$ of sets of the form $y+P$, $y\in \mathbb{F}_q^2$. Unlike the real case, $\mathbb{F}_q^2$ can shatter 4 points. The table below provides explicit configurations that achieve this for several fields.  
\v
\[
\begin{array}{rcl}
\mathbb{F}_{11}^2 & : & \{x^1=(0,0),\, x^2=(1,2),\, x^3=(2,8),\, x^4=(7,4)\} \\[4pt]
\mathbb{F}_{17}^2 & : & \{x^1=(0,0),\, x^2=(0,1),\, x^3=(1,8),\, x^4=(12,13)\} \\[4pt]
\mathbb{F}_{23}^2 & : & \{x^1=(0,0),\, x^2=(1,2),\, x^3=(10,17),\, x^4=(13,6)\} \\[4pt]
\mathbb{F}_{29}^2 & : & \{x^1=(0,0),\, x^2=(0,2),\, x^3=(8,7),\, x^4=(11,2)\}
\end{array}
\]
\v

\n For the field $\mathbb{F}_{11}$, the centers of the corresponding parabolas are:
\v
\[
\begin{array}{rclcrcl}
y_{1} &=& (0, 0) &\quad& y_{1,2} &=& (9, 4) \\[2pt]
y_{2} &=& (0, 3) &\quad& y_{1,3} &=& (10, 10) \\[2pt]
y_{3} &=& (0, 4) &\quad& y_{1,4} &=& (1, 1) \\[2pt]
y_{4} &=& (0, 9) &\quad& y_{2,3} &=& (0, 1) \\[2pt]
&& &\quad& y_{2,4} &=& (2, 1) \\[2pt]
&& &\quad& y_{3,4} &=& (1, 7) \\[2pt]
&& &\quad& y_{1,2,3} &=& (7, 5) \\[2pt]
&& &\quad& y_{1,2,4} &=& (5, 8) \\[2pt]
&& &\quad& y_{1,3,4} &=& (6, 3) \\[2pt]
&& &\quad& y_{2,3,4} &=& (10, 6) \\[2pt]
&& &\quad& y_{1,2,3,4} &=& (3, 9)
\end{array}
\]
\v

\n We used a computer to find these values, in order to demonstrate the difference between finite fields and the real numbers in this context.  

\newpage

\bibliographystyle{abbrv1}
\bibliography{bibdatabase.bib}

@Article{IR07,
  author     = {Iosevich, A. and Rudnev, M.},
  journal    = {Trans. Amer. Math. Soc.},
  title      = {Erd\"{o}s distance problem in vector spaces over finite fields},
  year       = {2007},
  issn       = {0002-9947},
  number     = {12},
  pages      = {6127--6142},
  volume     = {359},
  doi        = {10.1090/S0002-9947-07-04265-1},
  fjournal   = {Transactions of the American Mathematical Society},
  mrclass    = {11T24 (52C10)},
  mrnumber   = {2336319},
  mrreviewer = {Ben Joseph Green},
  url        = {https://doi-org.ezp.lib.rochester.edu/10.1090/S0002-9947-07-04265-1},
}

@article {SMALL22,
    AUTHOR = {Ascoli, Ruben and Betti, Livia and Cheigh, Justin and
              Iosevich, Alex and Jeong, Ryan and Liu, Xuyan and McDonald,
              Brian and Milgrim, Wyatt and Miller, Steven J. and Romero
              Acosta, Francisco and Velazquez Iannuzzelli, Santiago},
     TITLE = {V{C}-dimension and distance chains in {$\Bbb F^d_q$}},
   JOURNAL = {Korean J. Math.},
  FJOURNAL = {The Korean Journal of Mathematics},
    VOLUME = {32},
      YEAR = {2024},
    NUMBER = {1},
     PAGES = {43--57},
      ISSN = {1976-8605,2288-1433},
   MRCLASS = {68Q32},
  MRNUMBER = {4736053},
MRREVIEWER = {Hans-Ulrich\ Simon},
}

@Article{BCCHIP16,
  author     = {Bennett, Michael and Chapman, Jeremy and Covert, David and Hart, Derrick and Iosevich, Alex and Pakianathan, Jonathan},
  journal    = {J. Korean Math. Soc.},
  title      = {Long paths in the distance graph over large subsets of vector spaces over finite fields},
  year       = {2016},
  issn       = {0304-9914},
  number     = {1},
  pages      = {115--126},
  volume     = {53},
  doi        = {10.4134/JKMS.2016.53.1.115},
  fjournal   = {Journal of the Korean Mathematical Society},
  mrclass    = {52C10 (05C12 05C90 11T23)},
  mrnumber   = {3450941},
  mrreviewer = {Serge\u{\i} V. Konyagin},
  url        = {https://doi-org.ezp.lib.rochester.edu/10.4134/JKMS.2016.53.1.115},
}

@Article{VC71,
  author     = {Vapnik, V. N. and \v{C}ervonenkis, A. Ja.},
  journal    = {Dokl. Akad. Nauk SSSR},
  title      = {The uniform convergence of frequencies of the appearance of events to their probabilities},
  year       = {1968},
  issn       = {0002-3264},
  pages      = {781--783},
  volume     = {181},
  fjournal   = {Doklady Akademii Nauk SSSR},
  mrclass    = {60.30 (94.00)},
  mrnumber   = {0231431},
  mrreviewer = {R. M. Dudley},
}

@Article{E46,
  author     = {Erd\"{o}s, P.},
  journal    = {Amer. Math. Monthly},
  title      = {On sets of distances of {$n$} points},
  year       = {1946},
  issn       = {0002-9890},
  pages      = {248--250},
  volume     = {53},
  doi        = {10.2307/2305092},
  fjournal   = {American Mathematical Monthly},
  mrclass    = {48.0X},
  mrnumber   = {15796},
  mrreviewer = {I. Kaplansky},
  url        = {https://doi-org.ezp.lib.rochester.edu/10.2307/2305092},
}

@Article{GK15,
  author     = {Guth, Larry and Katz, Nets Hawk},
  journal    = {Ann. of Math. (2)},
  title      = {On the {E}rd\"{o}s distinct distances problem in the plane},
  year       = {2015},
  issn       = {0003-486X},
  number     = {1},
  pages      = {155--190},
  volume     = {181},
  doi        = {10.4007/annals.2015.181.1.2},
  fjournal   = {Annals of Mathematics. Second Series},
  mrclass    = {52C10},
  mrnumber   = {3272924},
  mrreviewer = {Shakhar Smorodinsky},
  url        = {https://doi-org.ezp.lib.rochester.edu/10.4007/annals.2015.181.1.2},
}

@Book{DS14,
  author    = {Shalev-Schwartz, Shai and Ben-David, Shai},
  publisher = {Cambridge University Press},
  title     = {Understanding Machine Learning: From Theory to Algorithms},
  year      = {2014},
}

@Article{IJM21,
  author  = {Iosevich, Alex and Jardine, Gail and McDonald, Brian},
  journal = {Tr. Mat. Inst. Steklova},
  title   = {Cycles of arbitrary length in distance graphs on $\mathbb{F}_q^d$},
  year    = {2021},
}

@Article{IMS23,
  author   = {Iosevich, A. and McDonald, B. and Sun, M.},
  journal  = {Discrete Math.},
  title    = {Dot products in {$\Bbb F_q^3$} and the {V}apnik-{C}hervonenkis dimension},
  year     = {2023},
  issn     = {0012-365X},
  number   = {1},
  pages    = {Paper No. 113096, 9},
  volume   = {346},
  doi      = {10.1016/j.disc.2022.113096},
  fjournal = {Discrete Mathematics},
  mrclass  = {68Q32 (42B10)},
  mrnumber = {4475952},
  url      = {https://doi-org.ezp.lib.rochester.edu/10.1016/j.disc.2022.113096},
}

@Article{FIMW23,
  author  = {Fitzpatrick, Davey and Iosevich, Alex and McDonald, Brian and Wyman, Emmett},
  fjournal = {Discrete and Computational Geometry},
journal = {Discrete Comput. Geom.},
  title   = {The {V}{C}-dimension and point configurations in $\mathbb{F}_q^2$},
  year    = {2023},
}

@Article{M10,
  author  = {Maga, P\'eter},
  journal = {Real Anal. Exchange},
  title   = {Full dimensional sets without given patterns},
  year    = {2010},
}

@article {MSW,
    AUTHOR = {McDonald, Brian and Sahay, Anurag and Wyman, Emmett L.},
     TITLE = {The {VC}-dimension of quadratic residues in finite fields},
   JOURNAL = {Discrete Math.},
  FJOURNAL = {Discrete Mathematics},
    VOLUME = {348},
      YEAR = {2025},
    NUMBER = {1},
     PAGES = {Paper No. 114192, 13},
      ISSN = {0012-365X,1872-681X},
   MRCLASS = {05C69 (11T24)},
  MRNUMBER = {4787319},
       DOI = {10.1016/j.disc.2024.114192},
       URL = {https://doi.org/10.1016/j.disc.2024.114192},
}

@Article{BHIPR17,
  author     = {Bennett, Michael and Hart, Derrick and Iosevich, Alex and Pakianathan, Jonathan and Rudnev, Misha},
  journal    = {Forum Math.},
  title      = {Group actions and geometric combinatorics in {$\Bbb{F}_q^d$}},
  year       = {2017},
  issn       = {0933-7741},
  number     = {1},
  pages      = {91--110},
  volume     = {29},
  doi        = {10.1515/forum-2015-0251},
  fjournal   = {Forum Mathematicum},
  mrclass    = {52C10 (42B10)},
  mrnumber   = {3592595},
  mrreviewer = {Serge\u{\i} V. Konyagin},
  url        = {https://doi-org.ezp.lib.rochester.edu/10.1515/forum-2015-0251},
}

@article {NSS25,
    AUTHOR = {Nguyen, Tung and Scott, Alex and Seymour, Paul},
     TITLE = {Induced subgraph density. {VI}. {B}ounded {VC}-dimension},
   JOURNAL = {Adv. Math.},
  FJOURNAL = {Advances in Mathematics},
    VOLUME = {482},
      YEAR = {2025},
     PAGES = {Paper No. 110601},
      ISSN = {0001-8708,1090-2082},
   MRCLASS = {05},
  MRNUMBER = {4972132},
       DOI = {10.1016/j.aim.2025.110601},
       URL = {https://doi.org/10.1016/j.aim.2025.110601},
}

@Article{W48,
  author     = {Weil, Andr\'{e}},
  journal    = {Proc. Nat. Acad. Sci. U.S.A.},
  title      = {On some exponential sums},
  year       = {1948},
  issn       = {0027-8424},
  pages      = {204--207},
  volume     = {34},
  doi        = {10.1073/pnas.34.5.204},
  fjournal   = {Proceedings of the National Academy of Sciences of the United States of America},
  mrclass    = {10.0X},
  mrnumber   = {27006},
  mrreviewer = {H. D. Kloosterman},
  url        = {https://doi-org.ezp.lib.rochester.edu/10.1073/pnas.34.5.204},
}

@Book{IK,
  author     = {Iwaniec, Henryk and Kowalski, Emmanuel},
  publisher  = {American Mathematical Society, Providence, RI},
  title      = {Analytic Number Theory},
  year       = {2004},
  isbn       = {0-8218-3633-1},
  series     = {American Mathematical Society Colloquium Publications},
  volume     = {53},
  doi        = {10.1090/coll/053},
  mrclass    = {11-02 (11Fxx 11Lxx 11Mxx 11Nxx)},
  mrnumber   = {2061214},
  mrreviewer = {K. Soundararajan},
  pages      = {xii+615},
  url        = {https://doi-org.ezp.lib.rochester.edu/10.1090/coll/053},
}

@article{SMALL23,
    AUTHOR = {Ascoli, Ruben and Betti, Livia and Cheigh, Justin and
              Iosevich, Alex and Jeong, Ryan and Liu, Xuyan and McDonald,
              Brian and Milgrim, Wyatt and Miller, Steven J. and Romero
              Acosta, Francisco and Velazquez Innuzzelli, Santiago},
     TITLE = {V{C}-dimension of hyperplanes over finite fields},
   JOURNAL = {Graphs Combin.},
  FJOURNAL = {Graphs and Combinatorics},
    VOLUME = {41},
      YEAR = {2025},
    NUMBER = {2},
     PAGES = {Paper No. 47, 13},
      ISSN = {0911-0119,1435-5914},
   MRCLASS = {12E20 (68Q32)},
  MRNUMBER = {4879034},
MRREVIEWER = {Fengwei\ Li},
       DOI = {10.1007/s00373-025-02909-6},
       URL = {https://doi.org/10.1007/s00373-025-02909-6},
}

@misc{H01,
      title={A large deviation inequality for vector valued martingales}, 
      author={Hayes, Thomas},
      year={2001},
      eprint={},
      archivePrefix={},
      primaryClass={},
      url={}, 
note = {\url{https://newtraell.cs.uchicago.edu/research/publications/techreports/TR-2001-24}}
}

@misc{B02,
      title={The {F}ourier transform and equations over finite abelian groups}, 
      author={Babai, L\H{a}szl\H{o}},
      year={2002},
      eprint={},
      archivePrefix={},
      primaryClass={},
      url={}, 
note = {\url{http://people.cs.uchicago.edu/~laci/reu02/fourier.pdf}}
}

@misc{AIM24,
      title={Interpolation of point configurations in the discrete plane}, 
      author={Esen Aksoy and Alex Iosevich and Brian McDonald},
note         = {\url{https://arxiv.org/abs/2408.07010}},
      year={2024},
      eprint={2408.07010},
      archivePrefix={arXiv},
      primaryClass={math.CO},
      url={https://arxiv.org/abs/2408.07010}, 

}

@misc{SMALL24,
      title={Congruence Classes of Simplex Structures in Finite Field Vector Spaces}, 
      author={Timothy Cheek and Joseph Cooper and Pico Gilman and Alex Iosevich and Kareem Jaber and Eyvindur Palsson and Vismay Sharan and Jenna Shuffelton and Marie-Hélène Tomé},
      year={2024},
      eprint={2408.07912},
      archivePrefix={arXiv},
      primaryClass={math.CO},
      url={https://arxiv.org/abs/2408.07912}, 
note = {\url{https://arxiv.org/abs/2408.07912}}
}

@article {M20,
    AUTHOR = {McDonald, Alex},
     TITLE = {Congruence classes of large configurations in vector spaces
              over finite fields},
   JOURNAL = {Funct. Approx. Comment. Math.},
  FJOURNAL = {Uniwersytet im. Adama Mickiewicza w Poznaniu. Wydzia\l\
              Matematyki i Informatyki. Functiones et Approximatio
              Commentarii Mathematici},
    VOLUME = {62},
      YEAR = {2020},
    NUMBER = {1},
     PAGES = {131--141},
      ISSN = {0208-6573,2080-9433},
   MRCLASS = {52C10},
  MRNUMBER = {4074393},
MRREVIEWER = {Uma\ kant\ Sahoo},
       DOI = {10.7169/facm/1814},
       URL = {https://doi.org/10.7169/facm/1814},
}

@article {PSTT25,
    AUTHOR = {Pham, Thang and Senger, Steven and Tait, Michael and
              Thu-Huyen, Nguyen},
     TITLE = {V{C}-dimension and pseudo-random graphs},
   JOURNAL = {Discrete Appl. Math.},
  FJOURNAL = {Discrete Applied Mathematics. The Journal of Combinatorial
              Algorithms, Informatics and Computational Sciences},
    VOLUME = {365},
      YEAR = {2025},
     PAGES = {231--246},
      ISSN = {0166-218X,1872-6771},
   MRCLASS = {05C50},
  MRNUMBER = {4859096},
       DOI = {10.1016/j.dam.2025.01.030},
       URL = {https://doi.org/10.1016/j.dam.2025.01.030},
}

@misc{IMMM25,
      title={The {V}{C}-dimension and point configurations in $\mathbb{R}^d$}, 
      author={Alex Iosevich and Akos Magyar and Alex McDonald and Brian McDonald},
      year={2025},
      eprint={2510.13984},
      archivePrefix={arXiv},
      primaryClass={math.CA},
      url={https://arxiv.org/abs/2510.13984}, 
        note = {\url{https://arxiv.org/abs/2510.13984}}
}

@article {FPS19,
    AUTHOR = {Fox, Jacob and Pach, J\'anos and Suk, Andrew},
     TITLE = {Erd\"{o}s-{H}ajnal conjecture for graphs with bounded
              {VC}-dimension},
   JOURNAL = {Discrete Comput. Geom.},
  FJOURNAL = {Discrete \& Computational Geometry. An International Journal
              of Mathematics and Computer Science},
    VOLUME = {61},
      YEAR = {2019},
    NUMBER = {4},
     PAGES = {809--829},
      ISSN = {0179-5376,1432-0444},
   MRCLASS = {05D10 (05C55 05C65 52C10)},
  MRNUMBER = {3943496},
MRREVIEWER = {Luis\ Boza},
       DOI = {10.1007/s00454-018-0046-5},
       URL = {https://doi.org/10.1007/s00454-018-0046-5},
}

@article {HW87,
    AUTHOR = {Haussler, David and Welzl, Emo},
     TITLE = {{$\epsilon$}-nets and simplex range queries},
   JOURNAL = {Discrete Comput. Geom.},
  FJOURNAL = {Discrete \& Computational Geometry. An International Journal
              of Mathematics and Computer Science},
    VOLUME = {2},
      YEAR = {1987},
    NUMBER = {2},
     PAGES = {127--151},
      ISSN = {0179-5376,1432-0444},
   MRCLASS = {68U05 (52-04 68P20)},
  MRNUMBER = {884223},
       DOI = {10.1007/BF02187876},
       URL = {https://doi.org/10.1007/BF02187876},
}

@article{D78,
author = {R. M. Dudley},
title = {{Central Limit Theorems for Empirical Measures}},
volume = {6},
journal = {Ann. Probability},
fjournal = {The Annals of Probability},
number = {6},
publisher = {Institute of Mathematical Statistics},
pages = {899 -- 929},
keywords = {central limit theorems, Donsker classes, Effros Borel structure, empirical measures, metric entropy with inclusion, two-sample case, Vapnik-Cervonenkis classes},
year = {1978},
doi = {10.1214/aop/1176995384},
URL = {https://doi.org/10.1214/aop/1176995384}
}

@article {H95,
    AUTHOR = {Haussler, David},
     TITLE = {Sphere packing numbers for subsets of the {B}oolean {$n$}-cube
              with bounded {V}apnik-{C}hervonenkis dimension},
   JOURNAL = {J. Combin. Theory Ser. A},
  FJOURNAL = {Journal of Combinatorial Theory. Series A},
    VOLUME = {69},
      YEAR = {1995},
    NUMBER = {2},
     PAGES = {217--232},
      ISSN = {0097-3165,1096-0899},
   MRCLASS = {52C17 (60F17)},
  MRNUMBER = {1313896},
MRREVIEWER = {Evarist\ Gin\'e},
       DOI = {10.1016/0097-3165(95)90052-7},
       URL = {https://doi.org/10.1016/0097-3165(95)90052-7},
}

@article {FPS21,
    AUTHOR = {Fox, Jacob and Pach, J\'anos and Suk, Andrew},
     TITLE = {Bounded {VC}-dimension implies the {S}chur-{E}rd\"{o}s
              conjecture},
   JOURNAL = {Combinatorica},
  FJOURNAL = {Combinatorica. An International Journal on Combinatorics and
              the Theory of Computing},
    VOLUME = {41},
      YEAR = {2021},
    NUMBER = {6},
     PAGES = {803--813},
      ISSN = {0209-9683,1439-6912},
   MRCLASS = {05D10},
  MRNUMBER = {4357431},
MRREVIEWER = {Mikl\'os\ B\'ona},
       DOI = {10.1007/s00493-021-4530-9},
       URL = {https://doi.org/10.1007/s00493-021-4530-9},
}

@article{PSTV22,
    AUTHOR = {Pham, Thang and Senger, Steven and Tait, Michael and Thu, Vu
              Thi Huong},
     TITLE = {Geometric structures in pseudo-random graphs},
   JOURNAL = {Canad. J. Math.},
  FJOURNAL = {Canadian Journal of Mathematics. Journal Canadien de
              Math\'ematiques},
    VOLUME = {77},
      YEAR = {2025},
    NUMBER = {3},
     PAGES = {1041--1071},
      ISSN = {0008-414X,1496-4279},
   MRCLASS = {52C10 (11T06)},
  MRNUMBER = {4897400},
       DOI = {10.4153/S0008414X24000245},
       URL = {https://doi.org/10.4153/S0008414X24000245},
}

\end{document}